\begin{document}

 \newtheorem{thm}{Theorem}[subsection]
 \newtheorem{cor}[thm]{Corollary}
 \newtheorem{lem}[thm]{Lemma}
 \newtheorem{prop}[thm]{Proposition}
 \theoremstyle{definition}
  \newtheorem{defn}[thm]{Definition}
 \theoremstyle{remark}
 \newtheorem{rem}[thm]{Remark}
 \newtheorem{ex}[thm]{Example}
 \theoremstyle{example}

 \numberwithin{equation}{subsection}
 \def\stackunder#1#2{\mathrel{\mathop{#2}\limits_{#1}}}
 \newcommand{\hooklongrightarrow}{\lhook\joinrel\longrightarrow}

 \title{Transition from local to global of dFHE and dWCHP}
 \author{I. POP}
 \date{}
 \maketitle
\begin{abstract}In a previous paper \cite{Pop4} the author studied the directed weak covering homotopy property (dWCHP)and directed weak fibrations in the category dTop of directed spaces in the sense of M. Grandis \cite{Grand1}, \cite{Grand2}, \cite{Grand3}. This type of maps extend to the category dTop the well known Dold's (or weak) fibrations \cite{Dold1}. In this paper the transition from local to global of the dFHE (directed fiber homotopy equivalence) and the dWCHP are studied by proving two Dold type theorems and respectively a tom Dieck-Kamps-Puppe type theorem \cite{Dieck}. Some new notions of directed topology are defined: d-halo, d-SEP, d-numerable covering, d-shrinkable. Some examples and counterexamples are given.

\textit{2000 Mathematics Subject Classification.}  55R05;55P99,55U35,55R65, 54E99.

\textit{Keywords.} Directed space, directed fibration, vertical directed homotopy, directed weak covering homotopy property, directed weak fibration, d-domination, semistationary d-homotopy, directed semistationary lifting pair, directed fibre homotopy equivalence, d-shrinkable, d-halo, directed section extension property, d-numerable covering, saturated d-map.
\end{abstract}

\section{Introduction}
Directed Algebraic Topology is a recent subject which arose from the study of some phenomena in the analysis of concurrency, traffic networks, space-time models, etc.(\cite{Faj1}-\cite{Gou1}). It was systematically developed by Marco Grandis (\cite{Grand1},\cite{Grand2},\cite{Grand3}). Directed spaces have privileged directions and directed paths therefore do not need to be reversible. M.Grandis introduced and studied 'non-reversible' homotopical tools corresponding to ordinary homotopies, fundamental group and fundamental $n$-groupoids: directed homotopies, fundamental monoids and fundamental $n$-categories. Also some directed homotopy constructions were considered: pushouts and pullbacks, mapping cones and homotopy fibres, suspensions and loops, cofibre and fibre spaces. As for directed fibrations, M.Grandis \cite{Grand2} refers to these (more precisely to the so-called lower and upper d-fibrations) only in relation with directed h-pullbacks. But in \cite{Pop3},  the author of this paper defined and studied in detail Hurewicz directed fibrations (bilateral d-fibrations in the sense of definitions given by Grandis). In classical algebraic topology, besides the homotopy covering property (CHP)\cite{Hur}, other properties of covering/lifting homotopy have also been studied and proved very interesting :weak CHP (WCHP)\cite{Dold1}, \cite{Dieck}, \cite{Kamp1}, \cite{James}; rather weak CHP (RWCHP)\cite{Brown1},\cite{Brown2},\cite{Pop1},\cite{Kie}; very weak CHP (VWCHP) \cite{Kie}; approximate fibrations (AHLP)\cite{CorDuv},\cite{CorDuv2},\cite{Pop2}. In \cite{Pop4} the author introduced and studied the directed weak covering homotopy property (dWCHP)and directed weak fibrations. In this paper the transition from local to global of the dFHE (directed fiber homotopy equivalence) and the dWCHP are studied by proving two Dold type theorems ( Theorems \ref{thm.3.1.13} and \ref{thm.3.1.15} ) and respectively a tom Dieck-Kamps-Puppe type theorem (Theorem \ref{thm.3.2.4}). These rather seemingly restrictive notions have been introduced by the author in order to adjust some undirected case proofs to directed algebraic topology. But these are closely linked to the respective usual notions with which they coincide if the d-structures contain all the paths of the underlying spaces. Moreover, in all the cases, examples and counterexamples are given, or at least explanatory commentaries are made.

The basics of the Directed Algebraic Topology which we will use are taken from the works of Marco Grandis \cite{Grand1},\cite{Grand2},\cite{Grand3}.

A \textsl{directed space} or a \textsl{d-space}, is a topological space $X$ equipped with a set $dX$ of continuous maps $a:\mathbf{I}=[0,1]\rightarrow X$, called \textsl{directed paths}, or \textsl{d-paths}, satisfying the following three axioms:

(i)(constant paths) every constant map $\mathbf{I}\rightarrow X$ is a directed path,

(ii)(reparametrisation) the set $dX$ is closed under composition with (weakly)increasing maps $\mathbf{I}\rightarrow \mathbf{I}$ ,

(iii) (concatenation) the set $dX$ is closed under concatenation (the product of consecutive paths, which will be denoted by $\ast$).

We use the notations $\underline{X}$ or $\uparrow X$ if $X$ is the underlying topological space; if $\underline{X}$ (or $\uparrow X$) is given, then the set of directed paths is denoted by $d\underline{X}$ (resp.$d(\uparrow X)$)and the underlying space by $|\underline{X}|$ (resp.$|\uparrow X|$).

The standard $d$-interval with the directed paths given by increasing (weakly) maps $\mathbf{I}\rightarrow \mathbf{I}$ is denoted by $\uparrow \mathbf{I}=\uparrow [0,1]$.

A \textsl{directed map}, or $d-map$, $f:\underline{X}\rightarrow \underline{Y}$, is a continuous mapping between d-spaces which preserves the directed paths: if $a\in d\underline{X}$, then $f\circ a\in d\underline{Y}$. The category of directed spaces and directed maps is denoted by $d\mathbf{Top}$ (or $\uparrow \mathbf{Top}$). A directed path $a\in d\underline{X}$ defines a directed map $a:\uparrow \mathbf{I}\rightarrow \underline{X}$ which is also a \textsl{path} of $\underline{X}$.

For two points $x,x'\in \underline{X}$ we write $x\preceq x'$ if there exists a directed path from $x$ to $x'$. The equivalence relation $\simeq $ spanned by $\preceq $ yields the partition of a d-space $\underline{X}$ in its directed path components and a functor $\uparrow \Pi_0:d\mathbf{Top}\rightarrow \mathbf{Set}$, $\uparrow \Pi_0(\underline{X})=|\underline{X}|/\simeq $. A non-empty d-space $\underline{X}$ is a \textsl{directed path connected} if $\uparrow \Pi_0(\underline{X})$ contains only one element.

The \textsl{directed cylinder} of a d-space $\underline{X}$ is the d-space $\uparrow (|\underline{X}|\times \mathbf{I})$ , denoted by $\underline{X}\times \uparrow \mathbf{I}$ or $\uparrow \mathbf{I}\underline{X}$, for which a path $\mathbf{I}\rightarrow |\underline{X}|\times \mathbf{I}$ is directed if and only if its components $\mathbf{I}\rightarrow \underline{X}$, $\mathbf{I}\rightarrow \mathbf{I}$ are directed. The directed maps $\partial^{\alpha}:\underline{X}\rightarrow  \underline{X}\times \uparrow \mathbf{I}$, $\alpha=0,1$, defined by $\partial^{\alpha}(x)=(x,\alpha)$, are called the \textsl{faces} of the cylinder.

If $f,g:\underline{X}\rightarrow \underline{Y}$ are directed maps, a \textsl{directed homotopy} $\varphi$  from $f$ to $g$, denoted by $\varphi:f\rightarrow g$, or $\varphi:f\preceq g$, is a d-map $\varphi:\underline{X}\times \uparrow \mathbf{I}\rightarrow \underline{Y}$ such that $\partial^0\circ \varphi=f$ and $\partial^1\circ \varphi=g$. The equivalence relation defined by the d-homotopy preorder $\preceq $ is denoted by $f\simeq_d g$ or simply $f\simeq g$ . This means that there exists a finite sequence $f\preceq f_1\succeq f_2\preceq f_3\succeq...g$.

Two d-spaces $\underline{X}$ and $\underline{Y}$ are \textsl{d-homotopy equivalent} if there exist d-maps $f:\underline{X}\rightarrow \underline{Y}$ and $g:\underline{Y}\rightarrow \underline{X}$ such that $g\circ f\simeq_d 1_{\underline{X}}$ and $f\circ g\simeq_d 1_{\underline{Y}}$.

\textsc{Standard models}. The spaces $\textbf{R}^n$, $\textbf{I}^n$, $\textbf{S}^n$  have their \textsl{natural} d-structure, admitting all (continuous) paths. $\textbf{I}$ is called the \textsl{natural} interval. The \textsl{directed real line}, or \textsl{d-line} $\uparrow \mathbf{R}$ is the Euclidean line with directed paths given by the increasing maps $\mathbf{I}\rightarrow \mathbf{R}$ (with respect to natural orders). Its cartesian power in d\textbf{Top}, the \textsl{n-dimensional real d-space }$\uparrow \mathbf{R}^n$ is similarly described (with respect to the product order, $x\leq x'$ iff $x_i\leq x'_i$ for all $i$). The \textsl{standard d-interval} $\uparrow \mathbf{I}=\uparrow [0,1]$ has the subspace structure of the d-line; the \textsl{standard d-cube } $\uparrow \mathbf{I}^n $ is its n-th power, and a subspace of $\uparrow \mathbf{R}^n$. The \textsl{standard directed circle} $\uparrow \textbf{S}^1$ is the standard circle with the \textsl{anticlockwise structure}, where the directed paths $a:\mathbf{I}\rightarrow \mathbf{S}^1$ move this way, in the plane: $a(t)=[1,\vartheta (t)]$, with an increasing function $\vartheta$ (in polar coordinates).

A \textsl{directed quotient} $\underline{X}/R$ has the quotient structure, formed of finite concatenations of projected d-paths; in particular, for a subset $A\subset |\underline{X}|$, by $\underline{X}/A$ is denoted the quotient of $\underline{X}$ which identifies all points of $A$. In particular, $\uparrow \mathbf{S}^n=(\uparrow \mathbf{I}^n/\partial \mathbf{I}^n), (n>0)$. The standard circle has another d-structure of interest, induced by $\mathbf{R}\times \uparrow \mathbf{R}$ and called the ordered circle $\uparrow \mathbf{O}^1\subset \mathbf{R}\times \uparrow \mathbf{R}$. It is the quotient of $\uparrow \mathbf{I}+\uparrow \mathbf{I}$ which identifies lower and upper endpoints, separately.

The forgetful functor $U:d\mathbf{Top}\rightarrow \mathbf{Top}$ has adjoints: on the  left  $c_0(X)$ with \textsl{d-discrete structure} of constant paths,  and on the right $C^0(X)$ with the \textsl{natural d-structure} of all paths.

Reversing d-paths, by involution $r:\mathbf{I}\rightarrow \mathbf{I}, r(t)=1-t$, gives the \textsl{reflected}, or opposite, d-space; this forms a (covariant) involutive endofunctor, called \textsl{reflection} $R:d\mathbf{Top}\rightarrow d\mathbf{Top}, R(\underline{X})=\underline{X}^{op},(a\in d(\underline{X }^{op})\Leftrightarrow a^{op}:=a\circ r\in d\underline{X}$). A d-space is \textsl{symmetric} if it is invariant under reflection. It is \textsl{reflexive}, or \textsl{self-dual}, if it is isomorphic to its reflection, which is more general. The d-spaces $\uparrow \mathbf{R}^n$,$\uparrow \mathbf{I}^n$, $\uparrow\mathbf{S}^n$ and $\uparrow \mathbf{O}^1$ are all reflexive.

\section{Directed fibrations and Directed weak fibrations  }

In this section we resume the papers  \cite{Pop3} and \cite{Pop4} including definitions and results that are necessary or interesting for the present paper.

\subsection{Directed fibrations \cite{Pop3}}

\begin{defn}\label{def.2.1.1}Let $p:\underline{E}\rightarrow \underline{B}$,$f:\underline{X}\rightarrow \underline{B}$ be directed maps. A d-map $f':\underline{X}\rightarrow \underline{E}$ is called a directed lift of $f$ with respect to $p$ if $p\circ f'=f$.
\end{defn}
\begin{defn}\label{def.2.1.2}A directed map $p:\underline{E}\rightarrow \underline{B}$ is said to have the directed homotopy lifting property with respect to a d-space $\underline{X}$ if, given d-maps $f':\underline{X}\rightarrow \underline{E}$ and $\varphi:\underline{X}\times \uparrow \mathbf{I}\rightarrow \underline{B}$, and $\alpha\in \{0,1\}$, such that $\varphi \circ \partial^{\alpha}=p\circ f'$, there is a directed lift of $\varphi$, $\varphi':\underline{X}\times \uparrow \mathbf{I}\rightarrow \underline{E}$, with respect to $p$, $p\circ \varphi'=\varphi$, such that $\varphi'\circ\partial^\alpha=f'$.
$$
\xymatrix{
\underline{E} \ar[rr]^p & & \underline{B}\\
\underline{X} \ar[u]^{f'}_{\ \  =} \ar[rr]_{\partial^{\alpha}} & &
\underline{X}\times \uparrow
\textbf{I}\ar[ull]^{\varphi'}\ar[u]_{\varphi}^{=\,\,\,\,\,\,\,} }
$$
A directed map $p:\underline{E}\rightarrow \underline{B}$ is called a directed (Hurewicz) fibration if $p$ has the directed homotopy lifting property with respect to all directed spaces (dHLP). This property is also called the directed covering homotopy property (dCHP).
\end{defn}
\textsc{Properties of directed fibrations}.

P1. If $p:\underline{E}\rightarrow \underline{B}$ has the directed homotopy lifting property with respect to $\underline{X}$ and $f_0,f_1:\underline{X}\rightarrow \underline{B}$ are directed homotopic, $f_0\simeq_d f_1$, then $f_0$ has a directed lift with respect to $p$ iff $f_1$ has this property.

P2. Let $p:\underline{E}\rightarrow \underline{B}$ be a directed fibration and $a\in d\underline{B}$ with $a(\alpha)=p(e_\alpha),e_\alpha\in |\underline{E}|$, and $\alpha\in \{0,1\}$. Then there exists a directed path $a_\alpha\in d\underline{E}$ which is a lift of $a$, $p\circ a_\alpha=a$, with the $\alpha$-endpoint $e_\alpha$, $a_\alpha(\alpha)=e_\alpha$.

\textsc{Examples of directed fibrations}.

E1. Let $\underline{F}$ and $\underline{B}$  be arbitrary directed spaces and let $p:\underline{B}\times \underline{F}\rightarrow \underline{B}$ be the projection. Then $p$ is a directed fibration.

E2. Let $p:E\rightarrow |\underline{B}|$  be a Hurewicz fibration. For the space $E$ consider the maximal d-structure compatible with $d\underline{B}$ and $p$, i.e., $d(\uparrow E)=\{a\in E^I/p\circ a\in d\underline{B}\}$. Then $p:\uparrow E\rightarrow\underline{B}$ is a directed fibration.

E3. If $p:\underline{E}\rightarrow \underline{B}$ is a directed fibration, then the opposite map $p:\underline{E}^{op}\rightarrow \underline{B}^{op}$ is also a directed fibration.

For an intrinsic characterization of the dHLP we need some notations.

Given a d-map $p:\underline{E}\rightarrow \underline{B}$ and $\alpha\in \{0,1\}$, we consider the following d-subspace of the product (in d\textbf{Top}) $\underline{E}\times \underline{B}^{\uparrow \mathbf{I}}$
$$\underline{B}_\alpha=\{(e,a)\in \underline{E}\times \underline{B}^{\uparrow \mathbf{I}}|a(\alpha)=p(e)\} $$
(The d-structure of $\underline{B}^{\uparrow \mathbf{I}}$ is given by the exponential law, $d\mathbf{Top}(\uparrow \mathbf{I},d\mathbf{Top}(\uparrow \mathbf{I},\underline{B}))\approx d\mathbf{Top}(\uparrow \mathbf{I}\times \uparrow \mathbf{I},\underline{B})$,\cite{Grand1}).

\begin{defn}\label{def.2.1.3}
A directed lifting pair for a directed map $p:\underline{E}\rightarrow\underline{B}$ is a pair of d-maps

\begin{equation}
\lambda_\alpha:\underline{B}_\alpha\rightarrow \underline{E}^{\uparrow \mathbf{I}}, \alpha=0,1,
\end{equation}
satisfying the following conditions:

\begin{equation}
\lambda_\alpha(e,\omega)(\alpha)=e,
\end{equation}
\begin{equation}
p\circ \lambda_\alpha(e,\omega)=\omega,
\end{equation}
for each $(e,\omega)\in \underline{B}_\alpha$.
\end{defn}
\begin{thm}\label{thm.2.1.4}(i) A directed map $p:\underline{E}\rightarrow \underline{B}$ is a directed fibration if and only if there exists a directed lifting pair for $p$.

(ii) If $p:\underline{E}\rightarrow \underline{B}$ is a directed fibration, the d-spaces $\underline{B}_0$ and $\underline{B}_1$ are d-homotopiy equivalent.
\end{thm}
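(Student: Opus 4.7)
The strategy rests on the exponential adjunction $d\mathbf{Top}(\underline{X}\times\uparrow\mathbf{I},\underline{Y})\cong d\mathbf{Top}(\underline{X},\underline{Y}^{\uparrow\mathbf{I}})$ recalled just before Definition~\ref{def.2.1.3}. For the forward direction of (i), assume $p$ is a directed fibration and apply the dHLP to $\underline{X}=\underline{B}_\alpha$, with $f'(e,\omega)=e$ and $\varphi\colon\underline{B}_\alpha\times\uparrow\mathbf{I}\to\underline{B}$ the d-adjoint of the projection $(e,\omega)\mapsto\omega$, i.e.\ $\varphi((e,\omega),t)=\omega(t)$. The compatibility $\varphi\circ\partial^\alpha=p\circ f'$ is exactly the defining relation $\omega(\alpha)=p(e)$ of $\underline{B}_\alpha$, so the lift $\varphi'\colon\underline{B}_\alpha\times\uparrow\mathbf{I}\to\underline{E}$ supplied by dHLP has a d-adjoint $\lambda_\alpha$ satisfying the boundary condition (2.1.2) from $\varphi'\circ\partial^\alpha=f'$ and the covering condition (2.1.3) from $p\circ\varphi'=\varphi$. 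Conversely, given a d-lifting pair, for arbitrary data $(\underline{X},f',\varphi,\alpha)$ set $h(x)=(f'(x),\widehat{\varphi}(x))$, with $\widehat{\varphi}$ the d-adjoint of $\varphi$; the hypothesis on $\varphi$ places $h$ inside $\underline{B}_\alpha$, and the d-adjoint of $\lambda_\alpha\circ h$ is the desired lift.

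For (ii), using the pair from (i), define $F\colon\underline{B}_0\to\underline{B}_1$ by $F(e,\omega)=(\lambda_0(e,\omega)(1),\omega)$ and $G\colon\underline{B}_1\to\underline{B}_0$ by $G(e,\omega)=(\lambda_1(e,\omega)(0),\omega)$; both are d-maps via composition of $\lambda_\alpha$ with endpoint evaluation $\underline{E}^{\uparrow\mathbf{I}}\to\underline{E}$. I do not expect $GF\preceq 1_{\underline{B}_0}$ or its reverse as a single d-homotopy, since undoing a lift would generally require reversing d-paths in $\underline{E}$; instead I produce a zigzag through an auxiliary map. Introduce the reparametrization $\omega_s(t)=\omega(\max(s,t))$: since $t\mapsto\max(s,t)$ is weakly increasing, $\omega_s\in d\underline{B}$, and the joint assignment $(s,t)\mapsto\max(s,t)$ is monotone in each variable. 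Put $F''(e,\omega)=(\lambda_0(e,\omega)(1),\omega_1)$, with $\omega_1$ the constant path at $\omega(1)$. Then
\[\Phi((e,\omega),s)=(\lambda_0(e,\omega)(s),\omega_s),\qquad \Psi((e,\omega),s)=(\lambda_1(F(e,\omega))(s),\omega_s)\]
give d-homotopies $\Phi\colon 1_{\underline{B}_0}\preceq F''$ and $\Psi\colon GF\preceq F''$ -- the constraint $\omega_s(0)=\omega(s)$ matches $p$ applied to both first coordinates via (2.1.3) -- yielding $GF\simeq_d 1_{\underline{B}_0}$. The relation $FG\simeq_d 1_{\underline{B}_1}$ is obtained by the mirror construction with $\omega^s(t)=\omega(\min(s,t))$, intermediate $F'''(e,\omega)=(\lambda_1(e,\omega)(0),\omega^0)$ (constant at $\omega(0)$), and analogous homotopies $\Phi_1((e,\omega),s)=(\lambda_1(e,\omega)(s),\omega^s)$ and $\Psi_1((e,\omega),s)=(\lambda_0(G(e,\omega))(s),\omega^s)$ realizing $F'''\preceq 1_{\underline{B}_1}$ and $F'''\preceq FG$.

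The main obstacle is pinpointing this intermediate map together with the $\max$/$\min$ reparametrization: the classical undirected argument is free to reverse segments of $\omega$, and here one must replace that freedom with monotone reparametrizations that simultaneously preserve the defining constraint of $\underline{B}_\alpha$ at every intermediate $s$ and keep the deformations inside $d\underline{B}$. Once these ingredients are in place, the remaining verifications reduce to routine manipulations of (2.1.2)--(2.1.3) and the exponential adjunction.
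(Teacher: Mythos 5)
Your argument is correct. The paper itself states Theorem \ref{thm.2.1.4} without proof (it is imported from \cite{Pop3}), but part (i) is the standard exponential-adjunction argument one would expect there, and your part (ii) checks out: the intermediate map $F''$ (respectively $F'''$) together with the $\max$/$\min$ reparametrizations keeps every stage $(\lambda_0(e,\omega)(s),\omega_s)$ inside $\underline{B}_0$ via $\omega_s(0)=\omega(s)=p(\lambda_0(e,\omega)(s))$, the endpoint identities $\Phi(\cdot,0)=1$, $\Phi(\cdot,1)=F''=\Psi(\cdot,1)$, $\Psi(\cdot,0)=GF$ all follow from (2.1.2)--(2.1.3), and the zigzag $1\preceq F''\succeq GF$ correctly circumvents the non-reversibility of d-paths flagged in Remark \ref{rem.2.4.5}.
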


\subsection{Directed weak fibrations \cite{Pop4}}

\begin{defn}\label{def.2.2.1}
Let $p:\underline{E}\rightarrow \underline{B}$ and $f,g:\underline{X} \rightarrow \underline{E} $ directed maps such that $p\circ f=p\circ g$. If we suppose that $f\simeq_d g$ by a sequence of directed homotopies  $f\preceq f_1\succeq f_2\preceq f_3\succeq...g$, $p\circ f_k=p\circ f=p\circ g$,  with all the homotopies $\varphi_1:f\preceq f_1,\varphi_2:f_2\preceq f_1...,\varphi_k:f_k\preceq f_{k\pm 1},....$ satisfying  the conditions $(p\circ\varphi_k)(x,t)=p(f(x))$ , $k=1,2,...$, $(\forall) x\in \underline{X},(\forall) t\in [0,1]$, then we denote this by $f\stackunder{(p)}{\simeq_d}g $ and say that $f$ is vertically directed homotopic to $g$.
\end{defn}
\begin{defn}\label{def.2.2.2}
A directed map $p:\underline{E}\rightarrow \underline{B}$ is said to have the directed weak covering homotopy property with respect to a d-space $\underline{X}$ if, given d-maps $ f':\underline{X}\rightarrow \underline{E}$ and $\varphi:\underline{X}\times \uparrow \mathbf{I}\rightarrow \underline{B}$, and $\alpha \in \{0,1\}$, such that $\varphi\circ \partial^\alpha=p\circ f'$, there is a directed lift of $\varphi$, $\varphi': \underline{X}\times \uparrow \mathbf{I}\rightarrow \underline{E}$, with respect to $p$, $p\circ \varphi'=\varphi$, such that $\varphi'\circ \partial^\alpha$ and $f'$ are vertically  directed homotopic $ \varphi'\circ \partial^\alpha \stackunder{(p)}{\simeq_d} f'$.
$$
\xymatrix{
\underline{E} \ar[rr]^p & & \underline{B}\\
\underline{X} \ar[u]^{f'}_{\ \  \stackunder{(p)}{\simeq_d}} \ar[rr]_{\partial^{\alpha}} & &
\underline{X}\times \uparrow
\textbf{I}\ar@{-->}[ull]^{\varphi'}\ar[u]_{\varphi}^{=\,\,\,\,\,\,\,} }
$$
\end{defn}
\begin{prop}\label{prop.2.2.3} If $p:\underline{E}\rightarrow \underline{B}$ has the directed weak covering homotopy property with respect to $\underline{X}$ and $f_0,f_1:\underline{X}\rightarrow \underline{B}$ are directed homotopic, $f_0\simeq_d f_1$, then $f_0$ has a direct lift with respect to $p$ if and only if $f_1$ has this property.
\end{prop}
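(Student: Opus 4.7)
The plan is to reduce the general zigzag hypothesis $f_0\simeq_d f_1$ to the case of a single directed homotopy and then iterate. By definition of $\simeq_d$, the assumption $f_0\simeq_d f_1$ unfolds into a finite chain $f_0\preceq g_1\succeq g_2\preceq\cdots\simeq f_1$. An induction on the length of this chain reduces matters to proving the statement for one elementary step, i.e.\ for a single directed homotopy $\varphi:\underline{X}\times\uparrow\mathbf{I}\to\underline{B}$ with $\varphi\circ\partial^0=f$ and $\varphi\circ\partial^1=g$ (the opposite direction $\varphi:g\preceq f$ is handled symmetrically).

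For such a single step I would use dWCHP twice, once for each endpoint. Suppose first that $f$ admits a directed lift $f':\underline{X}\to\underline{E}$, so that $p\circ f'=f=\varphi\circ\partial^0$. Applying dWCHP with $\alpha=0$ to the pair $(f',\varphi)$ yields $\varphi':\underline{X}\times\uparrow\mathbf{I}\to\underline{E}$ with $p\circ\varphi'=\varphi$; then $g':=\varphi'\circ\partial^1$ satisfies $p\circ g'=\varphi\circ\partial^1=g$, and is therefore a directed lift of $g$. Conversely, if $g$ has a lift $g'$, then $p\circ g'=\varphi\circ\partial^1$, and dWCHP applied with $\alpha=1$ yields $\varphi'$ with $p\circ\varphi'=\varphi$; the map $f':=\varphi'\circ\partial^0$ is then a directed lift of $f$. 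Iterating this two-step observation along the zigzag transports a lift from either end to the other.

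I do not foresee any real obstacle: the only subtlety is purely bookkeeping, namely choosing $\alpha\in\{0,1\}$ correctly at each step of the zigzag so as to lift from the endpoint on which a lift is already known. Note in particular that the extra vertical d-homotopy clause $\varphi'\circ\partial^\alpha\stackunder{(p)}{\simeq_d}f'$ supplied by dWCHP is not used in this argument — we only need the bare existence of a new lift, not that it extends a given one — so the proof is formally identical to the classical Hurewicz/Dold argument transcribed into $d\mathbf{Top}$.
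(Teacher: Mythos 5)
Your proof is correct; the paper only recalls this proposition from \cite{Pop4} without reproving it, and your argument is the standard one (the exact analogue of property P1 for directed fibrations). The one genuinely directed-topology point — that since d-homotopies cannot be reversed one must invoke the dWCHP with $\alpha=0$ to push a lift from $f$ to $g$ and with $\alpha=1$ to push one back from $g$ to $f$ — is handled correctly, and you are right that the vertical-homotopy clause of the dWCHP is not needed, only the bare existence of a lift $\varphi'$ of the homotopy.
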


\begin{defn}\label{def.2.2.4} A directed map $p:\underline{E}\rightarrow \underline{B}$ is called a directed weak fibration or a directed Dold fibration if $p$ has the directed weak covering homotopy property with respect to every directed space (dWCHP).
\end{defn}
\begin{cor}\label{cor.2.2.5}
Let $p:\underline{E} \rightarrow \underline{B}$ be a directed weak fibration and  $a\in d \underline{B}$ a directed path with $a(\alpha)=p(e_\alpha)$ for a point $e_\alpha \in |\underline{E}|$ and $\alpha\in \{0,1\}$. Then $a$ admits a directed lift $a'_\alpha\in d\underline{E}$, $p\circ a'_\alpha=a$, whose  $\alpha-end$ point, $a'_\alpha(\alpha)$, is in the same directed path component of $\underline{E}$ as $e_\alpha$.
\end{cor}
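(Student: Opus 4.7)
The plan is to reduce the statement to a direct application of the dWCHP (Definition \ref{def.2.2.2}) with the domain $\underline{X}$ chosen to be the one-point d-space, which I will denote by $\{*\}$. Under the canonical isomorphism $\{*\}\times\uparrow\mathbf{I}\cong\,\uparrow\mathbf{I}$, a d-map from $\{*\}$ to $\underline{E}$ is the same datum as a point of $|\underline{E}|$, and a d-map from $\{*\}\times\uparrow\mathbf{I}$ to $\underline{B}$ is the same datum as a directed path in $\underline{B}$.

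Concretely, first I would define $f':\{*\}\to\underline{E}$ by $f'(*)=e_\alpha$ and $\varphi:\{*\}\times\uparrow\mathbf{I}\to\underline{B}$ by $\varphi(*,t)=a(t)$. The compatibility condition $\varphi\circ\partial^\alpha=p\circ f'$ becomes the given hypothesis $a(\alpha)=p(e_\alpha)$. Next, since $p$ is a directed weak fibration, dWCHP produces a directed lift $\varphi':\{*\}\times\uparrow\mathbf{I}\to\underline{E}$ of $\varphi$ such that $\varphi'\circ\partial^\alpha\stackunder{(p)}{\simeq_d}f'$. Setting $a'_\alpha(t):=\varphi'(*,t)$ gives a directed path in $\underline{E}$ with $p\circ a'_\alpha=a$, i.e., the required lift of $a$.

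It remains to interpret the vertical directed homotopy relation for maps from a one-point space. By Definition \ref{def.2.2.1}, $\varphi'\circ\partial^\alpha\stackunder{(p)}{\simeq_d}f'$ means there is a finite zigzag of directed homotopies between these two maps from $\{*\}$ to $\underline{E}$; each such homotopy $\varphi_k:\{*\}\times\uparrow\mathbf{I}\to\underline{E}$ is just a directed path in $\underline{E}$, and the vertical condition $(p\circ\varphi_k)(*,t)=p(f'(*))=a(\alpha)$ forces each of these paths to lie in the fibre $p^{-1}(a(\alpha))$. Thus we obtain a zigzag of directed paths in $\underline{E}$ from $a'_\alpha(\alpha)=\varphi'(*,\alpha)$ to $e_\alpha=f'(*)$, so these two points are related by the equivalence $\simeq$ spanned by $\preceq$, which is exactly the assertion that they belong to the same directed path component of $\underline{E}$.

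The argument is essentially a definition-chase and presents no real obstacle; the only subtlety worth highlighting is the passage from the abstract vertical d-homotopy $\stackunder{(p)}{\simeq_d}$ to a concrete chain of d-paths inside the fibre, which requires unwinding both Definition \ref{def.2.2.1} and the definition of $\uparrow\Pi_0$ given earlier in the introduction.
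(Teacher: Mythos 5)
Your proof is correct and is the standard argument: the paper states this corollary without proof (Section 2 summarizes \cite{Pop4}), but the intended derivation is exactly your specialization of the dWCHP to the one-point d-space, followed by unwinding the vertical d-homotopy zigzag into a chain of d-paths in the fibre over $a(\alpha)$, which places $a'_\alpha(\alpha)$ and $e_\alpha$ in the same class of the equivalence relation $\simeq$ spanned by $\preceq$.
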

\begin{ex}\label{ex.2.2.6}Let $p:E\rightarrow B$ be a weak fibration and $\uparrow B$ a d-structure on the space $B$. For the space $E$ consider the maximal d-structure compatible with $p$, i.e., $a\in \uparrow E $ iff $p\circ a\in d(\uparrow B)$. Then the directed map $p:\uparrow E\rightarrow \uparrow B$ is a directed weak fibration.
\end{ex}
\begin{ex}\label{ex.2.2.7}Consider in the directed space $\uparrow \mathbf{R}\times \mathbf{R}$ the subspaces $\underline{B}=\uparrow \mathbf{R}$ and $\underline{E}=\{(x,y)|x.y\geq 0\}$ , and the directed map $p:\underline{E}\rightarrow \underline{B}$, $p(x,y)=x$. This is not a directed fibration, but it is a directed weak fibration.
\end{ex}
\begin{defn}\label{def.2.2.8}
Let $p:\underline{E}\rightarrow \underline{B}$, $p':\underline{E}'\rightarrow \underline{B}$ directed maps. We say that $p$ is directed dominated by $p'$ (or $p'$ dominates $p$) if there exist directed maps $f:\underline{E}\rightarrow \underline{E}',g:\underline{E}'\rightarrow \underline{E}$ over $\underline{B}$, $p'\circ f=p,p\circ g=p'$ such that $g\circ f\stackunder{(p)}{\simeq_d}id_{\underline{E}}$.
\end{defn}
\begin{thm}\label{thm.2.2.8}If $p:\underline{E}\rightarrow \underline{B}$ is directed dominated by $p':\underline{E}'\rightarrow \underline{B}$ and if $p'$ has the dWCHP with respect to $\underline{X}$, then $p$ has the same property. Consequently if $p'$ is a directed weak fibration then $p$ is also a directed weak fibration.
\end{thm}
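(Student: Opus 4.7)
The plan is to transport the lifting problem up through the domination, solve it for $p'$ using the hypothesis, and then push the solution back down through $g$. Given data $(f',\varphi)$ with $\varphi\circ\partial^\alpha=p\circ f'$, I would first observe that $f\circ f':\underline{X}\to\underline{E}'$ satisfies $p'\circ(f\circ f')=(p'\circ f)\circ f'=p\circ f'=\varphi\circ\partial^\alpha$, so $(f\circ f',\varphi)$ is a legitimate lifting datum for $p'$. Applying the assumed dWCHP of $p'$ with respect to $\underline{X}$, I obtain $\psi':\underline{X}\times\uparrow\mathbf{I}\to\underline{E}'$ with $p'\circ\psi'=\varphi$ and $\psi'\circ\partial^\alpha\stackunder{(p')}{\simeq_d} f\circ f'$.

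Next I would set $\varphi':=g\circ\psi':\underline{X}\times\uparrow\mathbf{I}\to\underline{E}$. The identity $p\circ g=p'$ gives $p\circ\varphi'=p'\circ\psi'=\varphi$, so $\varphi'$ is already a directed lift of $\varphi$. The remaining task is to verify that $\varphi'\circ\partial^\alpha\stackunder{(p)}{\simeq_d}f'$. I would argue this in two stages. First, apply $g$ to the vertical sequence realising $\psi'\circ\partial^\alpha\stackunder{(p')}{\simeq_d}f\circ f'$: for each of its vertical d-homotopies $\varphi_k$ we have $p'\circ\varphi_k(x,t)=p'(f\circ f'(x))=p(f'(x))$, hence $p\circ(g\circ\varphi_k)(x,t)=p'\circ\varphi_k(x,t)=p(f'(x))$, so $g$ transports the sequence to a vertical chain witnessing $g\circ\psi'\circ\partial^\alpha\stackunder{(p)}{\simeq_d} g\circ f\circ f'$. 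Second, use the domination assumption $g\circ f\stackunder{(p)}{\simeq_d}\mathrm{id}_{\underline{E}}$: precomposing each vertical d-homotopy $H_k:\underline{E}\times\uparrow\mathbf{I}\to\underline{E}$ with $f'\times\mathrm{id}_{\uparrow\mathbf{I}}$ yields vertical d-homotopies (since $p\circ H_k(f'(x),t)=p(f'(x))$), giving $g\circ f\circ f'\stackunder{(p)}{\simeq_d}f'$. Concatenating the two chains produces $\varphi'\circ\partial^\alpha\stackunder{(p)}{\simeq_d}f'$, which is exactly the vertical condition required by Definition~\ref{def.2.2.2}.

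The ``Consequently'' clause is then immediate: if $p'$ has the dWCHP with respect to every d-space, the argument above shows $p$ inherits the property with respect to every d-space, i.e.\ $p$ is a directed weak fibration.

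The only real point to watch is the bookkeeping in the verticality check: one must confirm that postcomposition with $g$ and precomposition with $f'\times\mathrm{id}_{\uparrow\mathbf{I}}$ each send vertical-over-$p'$ (resp.\ vertical-over-$p$) d-homotopies to vertical-over-$p$ d-homotopies, and that the zig-zag relation $\stackunder{(p)}{\simeq_d}$ is genuinely transitive so that the two chains may be concatenated. Both are formal consequences of $p\circ g=p'$ and of the definition of $\stackunder{(p)}{\simeq_d}$ as the equivalence relation generated by the vertical d-homotopy preorder, so no extra hypothesis on $\underline{X}$, $\underline{E}$, or $\underline{B}$ is needed.
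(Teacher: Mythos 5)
Your proof is correct: transporting the lifting problem to $p'$ via $f$, lifting there, pushing back down with $g$, and then splicing the two vertical chains ($g$ applied to the chain over $p'$, and $g\circ f\stackunder{(p)}{\simeq_d}\mathrm{id}_{\underline{E}}$ precomposed with $f'\times\mathrm{id}_{\uparrow\mathbf{I}}$) is exactly the Dold-type domination argument this theorem rests on, and your verticality bookkeeping checks out. The paper itself only quotes this result from \cite{Pop4} without reproducing the proof, but your argument is the standard one used there, so no discrepancy to report.
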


\begin{cor}\label{cor.2.2.10}If $p:\underline{E}\rightarrow \underline{B}$ is directed dominated by a directed fibration $p':\underline{E}'\rightarrow \underline{B}$, then $p$ is a directed weak fibration.
\end{cor}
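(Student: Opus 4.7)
The plan is to deduce this corollary as an immediate consequence of Theorem \ref{thm.2.2.8}, whose last sentence already asserts that being directed dominated by a directed weak fibration forces $p$ to be a directed weak fibration. So the only thing I need to establish is the following observation: every directed (Hurewicz) fibration is automatically a directed weak fibration. Once this is in hand, the corollary follows instantly, since $p'$ being a directed fibration implies $p'$ is a directed weak fibration, and then domination by $p'$ yields that $p$ is a directed weak fibration.

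To justify the observation, I would argue as follows. Suppose $p':\underline{E}'\rightarrow \underline{B}$ is a directed fibration, and suppose we are given d-maps $f':\underline{X}\rightarrow \underline{E}'$ and $\varphi:\underline{X}\times\uparrow\mathbf{I}\rightarrow \underline{B}$ with $\varphi\circ\partial^\alpha = p'\circ f'$. By the dHLP (Definition \ref{def.2.1.2}), there exists a directed lift $\varphi':\underline{X}\times\uparrow\mathbf{I}\rightarrow \underline{E}'$ with $p'\circ\varphi' = \varphi$ and $\varphi'\circ\partial^\alpha = f'$ (equality, not just vertical homotopy). But equality trivially implies vertical directed homotopy in the sense of Definition \ref{def.2.2.1}: the constant homotopy $\psi:\underline{X}\times\uparrow\mathbf{I}\rightarrow \underline{E}'$, $\psi(x,t)=f'(x)$, is a d-map satisfying $\psi\circ\partial^0 = \psi\circ\partial^1 = f' = \varphi'\circ\partial^\alpha$ and $(p'\circ\psi)(x,t) = p'(f'(x))$, so $\varphi'\circ\partial^\alpha \stackunder{(p')}{\simeq_d} f'$ through a one-term constant sequence. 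Hence $p'$ has the dWCHP with respect to $\underline{X}$, and since $\underline{X}$ was arbitrary, $p'$ is a directed weak fibration.

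Combining this with Theorem \ref{thm.2.2.8}, applied to the domination of $p$ by $p'$, we conclude that $p$ is a directed weak fibration. There is no real obstacle here; the only point that requires a moment of care is verifying that the constant d-homotopy at $f'$ qualifies as a vertical directed homotopy in the sense of Definition \ref{def.2.2.1}, which is straightforward since the first axiom of a d-space guarantees that constant paths (and hence constant homotopies on d-cylinders) are directed.
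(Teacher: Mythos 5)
Your proof is correct and follows exactly the route the paper intends: the corollary is stated as an immediate consequence of Theorem \ref{thm.2.2.8} once one notes that a directed fibration is in particular a directed weak fibration (equality of $\varphi'\circ\partial^\alpha$ with $f'$ being upgraded to a vertical directed homotopy via the constant homotopy). Your verification that the constant homotopy is a d-map and is vertical over $p'$ is the only point needing care, and you handle it properly.
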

\begin{prop}\label{prop.2.2.11}Let $p:\underline{E}\rightarrow \underline{B}$ be a directed map with the dWCHP with respect to $\underline{X}$. Let $g':\underline{X}\rightarrow \underline{E}, \psi:\underline{X}\times (\uparrow \mathbf{I})^{op}\rightarrow \underline{B}$ directed maps, and $\alpha\in \{0,1\}$, such that $\psi\circ\partial^\alpha =p\circ g'$,where $\partial^\alpha$ denotes the faces of the inverse cylinder $\underline{X}\times (\uparrow \mathbf{I})^{op}$. Then there exists a directed map $\psi':\underline{X}\times (\uparrow \mathbf{I})^{op}\rightarrow \underline{E}$ satisfying the conditions $p\circ \psi'=\psi$ and $\psi'\circ \partial^\alpha \stackunder{(p)}{\simeq_d} g'$.
\end{prop}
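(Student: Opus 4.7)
The plan is to reduce the reverse-cylinder statement to the standard cylinder dWCHP by applying the involution $t \mapsto 1-t$ in the interval coordinate. The key observation is that on the underlying space $|\underline{X}| \times \mathbf{I}$, the identity map descends to a d-isomorphism $\tau: \underline{X} \times \uparrow \mathbf{I} \to \underline{X} \times (\uparrow \mathbf{I})^{op}$ given by $\tau(x,t) = (x, 1-t)$. Indeed, if $(a(s), b(s))$ is a d-path in $\underline{X} \times \uparrow \mathbf{I}$ (so $a$ is directed in $\underline{X}$ and $b:\mathbf{I}\to\mathbf{I}$ is increasing), then $\tau(a(s), b(s)) = (a(s), 1-b(s))$ and $1-b(s)$ is decreasing, i.e. a d-path of $(\uparrow \mathbf{I})^{op}$; the same argument with the roles swapped gives the inverse.

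First I would set $\bar\psi := \psi \circ \tau: \underline{X} \times \uparrow \mathbf{I} \to \underline{B}$, which is a composite of d-maps and hence a d-map. Since $\tau \circ \partial^{1-\alpha}(x) = \tau(x, 1-\alpha) = (x, \alpha) = \partial^\alpha(x)$, the assumption $\psi \circ \partial^\alpha = p \circ g'$ translates to $\bar\psi \circ \partial^{1-\alpha} = p \circ g'$. Now the dWCHP of $p$ with respect to $\underline{X}$, applied to the pair $(g', \bar\psi)$ with starting face $1-\alpha$, yields a d-map $\bar\psi': \underline{X} \times \uparrow \mathbf{I} \to \underline{E}$ such that $p \circ \bar\psi' = \bar\psi$ and $\bar\psi' \circ \partial^{1-\alpha} \stackunder{(p)}{\simeq_d} g'$.

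Finally, I would set $\psi' := \bar\psi' \circ \tau^{-1}: \underline{X} \times (\uparrow \mathbf{I})^{op} \to \underline{E}$, which is a d-map since $\tau^{-1}$ is. Then $p \circ \psi' = p \circ \bar\psi' \circ \tau^{-1} = \bar\psi \circ \tau^{-1} = \psi$, and $\psi' \circ \partial^\alpha = \bar\psi' \circ \tau^{-1} \circ \partial^\alpha = \bar\psi' \circ \partial^{1-\alpha}$, so the vertical d-homotopy $\bar\psi' \circ \partial^{1-\alpha} \stackunder{(p)}{\simeq_d} g'$ transfers verbatim to $\psi' \circ \partial^\alpha \stackunder{(p)}{\simeq_d} g'$ (both sides are literally the same map $\underline{X} \to \underline{E}$, and the chain of vertical homotopies certifying the relation is unchanged).

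There is no substantive obstacle; the only point that requires care is the d-isomorphism $\tau$, i.e. checking that the reparametrization $t \mapsto 1-t$ correctly exchanges the cylinder and the inverse cylinder structures. Everything else is a routine bookkeeping of the face indices $\alpha \leftrightarrow 1-\alpha$ and of the fact that vertical d-homotopy is defined intrinsically in terms of the maps themselves, so it is preserved under equality of maps.
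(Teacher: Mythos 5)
Your argument is correct: the reparametrization $\tau(x,t)=(x,1-t)$ is indeed a d-isomorphism between $\underline{X}\times \uparrow \mathbf{I}$ and $\underline{X}\times (\uparrow \mathbf{I})^{op}$ (this is exactly the reflexivity of $\uparrow \mathbf{I}$ applied to the interval factor), it exchanges the faces $\partial^0\leftrightarrow\partial^1$, and since the dWCHP of Definition \ref{def.2.2.2} is assumed for both values of $\alpha$, transporting the lifting problem across $\tau$ and back gives the required $\psi'$, with the vertical d-homotopy carried over unchanged because the relevant maps $\underline{X}\rightarrow\underline{E}$ coincide. This is essentially the same face-swapping reduction underlying the result as quoted from \cite{Pop4}; the present paper restates the proposition without proof.
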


\begin{cor}\label{cor.2.2.12}The reflection endofunctor $R:d\mathbf{Top}\rightarrow d\mathbf{Top}$ conserves the property of directed weak fibration.
\end{cor}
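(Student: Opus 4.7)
The plan is to reduce the corollary to Proposition \ref{prop.2.2.11} by applying the reflection functor $R$ twice. Concretely, suppose $p:\underline{E}\to\underline{B}$ is a directed weak fibration, and let $\underline{X}$ be an arbitrary d-space. We must show that $R(p):\underline{E}^{op}\to\underline{B}^{op}$ has the dWCHP with respect to $\underline{X}$. So start from d-maps $f':\underline{X}\to\underline{E}^{op}$ and $\varphi:\underline{X}\times\uparrow\mathbf{I}\to\underline{B}^{op}$ with $\varphi\circ\partial^{\alpha}=R(p)\circ f'$ for some $\alpha\in\{0,1\}$.

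Next, apply the reflection endofunctor. Since $R$ is an involutive endofunctor of $d\mathbf{Top}$ that commutes with binary products, applying $R$ to the above data yields d-maps $\tilde f=R(f'):\underline{X}^{op}\to\underline{E}$ and $\tilde\varphi=R(\varphi):\underline{X}^{op}\times(\uparrow\mathbf{I})^{op}\to\underline{B}$, satisfying $\tilde\varphi\circ\partial^{\alpha}=p\circ \tilde f$, where now $\partial^{\alpha}$ denotes the corresponding face of the inverse cylinder $\underline{X}^{op}\times(\uparrow\mathbf{I})^{op}$ (note that reflection reverses the directed paths on the interval factor but leaves the two endpoints as set-theoretic points intact).

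Now invoke Proposition \ref{prop.2.2.11} with $\underline{X}^{op}$ playing the role of $\underline{X}$, $\tilde f$ in place of $g'$, and $\tilde\varphi$ in place of $\psi$. This is legitimate because $p$ is a directed weak fibration and hence has the dWCHP with respect to \emph{every} d-space, in particular $\underline{X}^{op}$. The proposition produces a d-map $\tilde\varphi':\underline{X}^{op}\times(\uparrow\mathbf{I})^{op}\to\underline{E}$ with $p\circ\tilde\varphi'=\tilde\varphi$ and $\tilde\varphi'\circ\partial^{\alpha}\stackunder{(p)}{\simeq_d}\tilde f$.

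Finally, apply $R$ once more to $\tilde\varphi'$, obtaining $\varphi':=R(\tilde\varphi'):\underline{X}\times\uparrow\mathbf{I}\to\underline{E}^{op}$. Functoriality of $R$ immediately gives $R(p)\circ\varphi'=\varphi$, and it sends the zig-zag of d-homotopies witnessing $\tilde\varphi'\circ\partial^{\alpha}\stackunder{(p)}{\simeq_d}\tilde f$ to a reversed zig-zag in $\underline{E}^{op}$; since each homotopy in the original zig-zag projects to the constant path under $p$, the reflected homotopies likewise project to constant paths under $R(p)$, so the result is still a vertical d-homotopy, i.e., $\varphi'\circ\partial^{\alpha}\stackunder{(R(p))}{\simeq_d}f'$. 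This completes the verification that $R(p)$ is a directed weak fibration. The only mildly delicate step is the bookkeeping in the third paragraph — checking that reflection indeed converts the inverse-cylinder conclusion of Proposition \ref{prop.2.2.11} into the ordinary cylinder formulation of the dWCHP, and that verticality of the correcting homotopy survives reflection — but both issues dissolve once one uses that $R$ is an involutive endofunctor preserving products and the base map $p$ (up to renaming source and target).
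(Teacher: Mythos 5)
Your argument is correct and is exactly the intended derivation: the corollary is stated immediately after Proposition \ref{prop.2.2.11} precisely because reflecting the lifting problem for $R(p)$ turns it into the inverse-cylinder lifting problem for $p$ that the proposition solves, and reflecting back (using that $R$ is involutive, preserves the cylinder, and preserves verticality of the correcting zig-zag) yields the dWCHP for $R(p)$. No gaps; the bookkeeping about the faces $\partial^{\alpha}$ and the reversed zig-zag is handled as the paper intends.
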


\begin{prop}\label{prop.2.2.13}Let $p:\underline{E}\rightarrow \underline{B}$ be a directed weak fibration and $f:\underline{B}'\rightarrow \underline{B}$ a directed map. Denote the pullback $\underline{E}\prod_{\underline{B}}\underline{B}'$ by $\underline{E}_f$, i.e., $\underline{E}_f=\{(e,b')\in \underline{E}\times \underline{B}'|p(e)=f(b')\}$ with the d-structure as subspace of the product $\underline{E}\times \underline{B}'$. Then the projection $p_f:\underline{E}_f\rightarrow \underline{B}'$ is a directed weak fibration.
\end{prop}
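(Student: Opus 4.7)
The plan is to use the universal property of the pullback together with the dWCHP of $p$. Write $q_f:\underline{E}_f\rightarrow \underline{E}$ for the other projection, so that $p\circ q_f=f\circ p_f$, and recall that a d-map into $\underline{E}_f$ is the same data as a pair of d-maps into $\underline{E}$ and into $\underline{B}'$ whose compositions with $p$ and $f$ agree, because $\underline{E}_f$ carries the subspace d-structure of $\underline{E}\times \underline{B}'$.

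Given $g':\underline{X}\rightarrow \underline{E}_f$, $\varphi:\underline{X}\times \uparrow \mathbf{I}\rightarrow \underline{B}'$ and $\alpha\in\{0,1\}$ with $\varphi\circ \partial^\alpha=p_f\circ g'$, I would first push the data down to $\underline{B}$: set $\psi=f\circ \varphi$ and $h'=q_f\circ g'$. A short computation using $p\circ q_f=f\circ p_f$ gives $\psi\circ \partial^\alpha=p\circ h'$, so the dWCHP of $p$ applied to $\underline{X}$ yields a d-map $\widetilde\varphi:\underline{X}\times \uparrow \mathbf{I}\rightarrow \underline{E}$ with $p\circ \widetilde\varphi=\psi$ and a vertical d-homotopy sequence $\widetilde\varphi\circ \partial^\alpha\stackunder{(p)}{\simeq_d} h'$, witnessed by homotopies $\Phi_k:\underline{X}\times \uparrow \mathbf{I}\rightarrow \underline{E}$ satisfying $p\circ \Phi_k(x,t)=p(h'(x))=f(p_f(g'(x)))$.

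Next, I would assemble the lift in the pullback by the formula $\varphi'(x,t)=(\widetilde\varphi(x,t),\varphi(x,t))$. This lands in $\underline{E}_f$ because $p\circ \widetilde\varphi=f\circ \varphi$; it is a d-map because each coordinate is; and by construction $p_f\circ \varphi'=\varphi$ while $\varphi'\circ \partial^\alpha=(\widetilde\varphi\circ \partial^\alpha,\, p_f\circ g')$. To verify $\varphi'\circ \partial^\alpha \stackunder{(p_f)}{\simeq_d} g'$, I would lift each $\Phi_k$ to $\Psi_k:\underline{X}\times \uparrow \mathbf{I}\rightarrow \underline{E}_f$ defined by $\Psi_k(x,t)=(\Phi_k(x,t),\, p_f(g'(x)))$. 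The equality $p(\Phi_k(x,t))=f(p_f(g'(x)))$ keeps these pairs inside the pullback, the second coordinate is a constant-in-$t$ path (directed by axiom~(i) and the fact that $p_f\circ g'$ is a d-map on $\underline{X}$), and $p_f\circ \Psi_k(x,t)=p_f(g'(x))$ is constant in $t$, so the $\Psi_k$ form a vertical d-homotopy sequence for $p_f$ joining $\varphi'\circ \partial^\alpha$ to $g'$.

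The proof is essentially a universal-property argument, and I expect no substantive obstacle beyond bookkeeping: one must check at each step that the constructed maps land in $\underline{E}_f$ and respect the subspace d-structure. The only genuinely non-formal point is that vertical homotopies in $\underline{E}$ can be upgraded to vertical homotopies in $\underline{E}_f$ by pairing them with the constant-in-$t$ path $x\mapsto p_f(g'(x))$; this works precisely because being vertical with respect to $p$ is exactly the condition that keeps the second coordinate compatible with the pullback.
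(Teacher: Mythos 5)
Your argument is correct and is the standard pullback argument for Dold-type fibrations (the paper only recalls this proposition from \cite{Pop4} without reproducing a proof, but this is the expected route): push the lifting problem down to $\underline{B}$ via $f$ and $q_f$, lift using the dWCHP of $p$, reassemble in $\underline{E}_f$ by pairing with $\varphi$, and upgrade the vertical homotopies by pairing them with the $t$-constant second coordinate $x\mapsto p_f(g'(x))$, which stays in the pullback precisely because the homotopies are vertical over $p$. All the d-structure checks you flag (coordinatewise directedness for the subspace d-structure of $\underline{E}_f\subset \underline{E}\times\underline{B}'$, and directedness of the $t$-constant factor) go through as you describe.
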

\subsection{A further charcterisation of dWCHP \cite{Pop4}}

\begin{defn}\label{def.2.3.1}A directed homotopy $\varphi:\underline{X}\times \uparrow \mathbf{I}\rightarrow \underline{Y}$ is called semistationary if either  $\varphi(x,t)=\varphi(x,\frac{1}{2}),(\forall)x\in X,t\in [0,\frac{1}{2}]$ or $\varphi(x,t)=\varphi(x,\frac{1}{2}),(\forall) x\in X,t\in [\frac{1}{2},1]$. In the first case we say that $\varphi$ is lower semistationary and in the second case we say that $\varphi$ is upper semistationary.
 \end{defn}
\begin{thm}\label{thm.2.3.2}
A directed map $p:\underline{E}\rightarrow \underline{B}$ has the directed weak covering homotopy property (dWCHP) with respect to a directed space $\underline{X}$ if and only if $p$ has the directed covering homotopy property(dCHP) with respect to $\underline{X}$ for all semistationary directed homotopies.
\end{thm}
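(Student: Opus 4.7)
The theorem is a biconditional, so I argue each direction separately; the case $\alpha=1$ reduces to $\alpha=0$ by applying the reflection functor and invoking Cor.~\ref{cor.2.2.12}.

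\textbf{Easy direction (semistationary dCHP $\Rightarrow$ dWCHP).} Given any $\varphi:\underline X\times\uparrow\mathbf I\to\underline B$ with $\varphi\circ\partial^0 = p\circ f'$, manufacture a lower-semistationary companion
\[
\tilde\varphi(x,t) \;=\; \begin{cases} \varphi(x,0) & t\in[0,\tfrac12],\\ \varphi(x,2t-1) & t\in[\tfrac12,1]. \end{cases}
\]
This is a d-map by the concatenation and reparametrization axioms, and $\tilde\varphi\circ\partial^0 = p\circ f'$ still holds. Applying the hypothesis to $(\tilde\varphi, f')$ yields a strict lift $\tilde\psi$, and I set $\psi(x,t) = \tilde\psi(x,(t+1)/2)$. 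Then $p\circ\psi = \varphi$ by direct computation, and the assignment $s\mapsto\tilde\psi(x,s/2)$ is a single forward vertical d-homotopy from $f'$ to $\psi\circ\partial^0$ (because $\tilde\varphi$ is constant on $[0,\tfrac12]$), whence $\psi\circ\partial^0\stackunder{(p)}{\simeq_d} f'$ as dWCHP requires.

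\textbf{Hard direction (dWCHP $\Rightarrow$ semistationary dCHP).} Given lower-semistationary $\varphi$ and $f'$ with $\varphi\circ\partial^0 = p\circ f'$, apply dWCHP to obtain a weak lift $\psi$ with $p\circ\psi=\varphi$ and $\psi\circ\partial^0\stackunder{(p)}{\simeq_d} f'$. Since $\varphi$ is constant on $\underline X\times[0,\tfrac12]$, the restriction $\psi|_{\underline X\times\uparrow[0,1/2]}$ lies over $p\circ f'$, so (after reparametrization) it is a single forward vertical d-homotopy $\eta:\psi(-,0)\preceq\psi(-,\tfrac12)$. Provided one has a single forward vertical d-homotopy $K:f'\preceq\psi(-,\tfrac12)$, the strict lift is assembled as
\[
\varphi'(x,t) \;=\; \begin{cases} K(x,2t) & t\in[0,\tfrac12],\\ \psi(x,t) & t\in[\tfrac12,1]; \end{cases}
\]
continuity at $t=\tfrac12$ is automatic from $K(-,1)=\psi(-,\tfrac12)$, the d-map status comes from the concatenation axiom, $\varphi'\circ\partial^0 = f'$ by construction, and $p\circ\varphi'=\varphi$ uses the constancy of $p\circ K$ together with the lower-semistationarity of $\varphi$ on $[0,\tfrac12]$.

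\textbf{The main obstacle} is producing $K$. The hypothesis only supplies a zigzag $f'\preceq h_1\succeq h_2\preceq\cdots\psi(-,0)$ of vertical d-homotopies in the fibre over $p\circ f'$, and the backward steps $h_{i-1}\succeq h_i$ cannot simply be inverted, because reversing a d-homotopy is in general not a d-map. When the zigzag reduces to a single forward step $f'\preceq\psi(-,0)$, concatenation with $\eta$ delivers $K$ immediately. For longer zigzags I would exploit the flexibility afforded by the stationary half $[0,\tfrac12]$: each intermediate $h_i$ still satisfies $p\circ h_i = \varphi\circ\partial^0$, so dWCHP may be re-applied with initial map $h_i$, and the semistationary room should absorb the offending backward step into a freshly chosen lift, shortening the zigzag by one at each iteration until the single-step case is reached. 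The upper-semistationary case then follows dually, by passing to the opposite spaces and invoking Cor.~\ref{cor.2.2.12}.
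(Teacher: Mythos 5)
Your first direction (semistationary dCHP $\Rightarrow$ dWCHP) is correct: the delayed homotopy $\tilde\varphi$, the reparametrised lift $\psi$, and the single forward vertical d-homotopy $s\mapsto\tilde\psi(x,s/2)$ from $f'$ to $\psi\circ\partial^0$ all check out, and a single forward vertical homotopy certainly implies $\psi\circ\partial^0\stackunder{(p)}{\simeq_d}f'$. The set-up of the converse and the assembly formula for $\varphi'$ are also fine \emph{provided} the map $K$ exists; and your treatment of the case where the dWCHP happens to return a single forward step $f'\stackunder{(p)}{\preceq_d}\psi(-,0)$ is complete.

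The gap is exactly where you locate it, and your proposed repair does not close it. By Definition \ref{def.2.2.1}, $\stackunder{(p)}{\simeq_d}$ is only a zigzag $f'\preceq h_1\succeq h_2\preceq\cdots\psi(-,0)$ of vertical d-homotopies, and backward steps cannot be inverted in $d\mathbf{Top}$. Your plan --- ``re-apply dWCHP with initial map $h_i$ \dots shortening the zigzag by one at each iteration'' --- has no mechanism that actually shortens anything: applying the dWCHP to $(\varphi,h_i)$ produces a \emph{new} lift $\psi^{(i)}$ together with a \emph{new} zigzag from $h_i$ to $\psi^{(i)}(-,0)$ of uncontrolled length and orientation, and $\psi^{(i)}$ bears no relation to the lift you are trying to connect to $f'$; you are back at the starting point with a different zigzag. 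More structurally, your assembly commits to traversing, on $\underline{X}\times[0,\tfrac12]$, a single \emph{forward} vertical d-path from $f'$ to $\psi(-,\tfrac12)$; if the dWCHP allowed one to convert every vertical zigzag between lifts into a single forward vertical homotopy, the equivalence relation $\stackunder{(p)}{\simeq_d}$ would collapse to the preorder $\stackunder{(p)}{\preceq_d}$, which is false in general for directed spaces. So any correct proof must route the backward steps somewhere other than the initial segment of the lift. (For comparison, when the author faces the same zigzag issue in the proof of Proposition \ref{prop.3.2.2}, the backward homotopy is consumed by a diagonal evaluation $\overline{\Psi}(x,t,2t-1)$ placed in the \emph{terminal} half of the lift --- possible there because the initial datum lives on a half-cylinder and only agreement at $t=0$ is required; that device is not available verbatim in your situation, where $f'$ is defined on $\underline{X}$ alone and must be matched exactly at $t=0$.) As written, the hard direction is established only for zigzags consisting of one forward step, so the proof is incomplete.
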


\begin{cor}\label{cor.2.3.3}Let $p:E\rightarrow B$ be a directed weak fibration. Let $f':\underline{X}\rightarrow \underline{E}$ a d-map.
Suppose that for $\alpha\in \{0,1\}$ there is a map $\varphi:\underline{X}\times \uparrow \mathbf{I}\rightarrow \underline{B}$ such that
$\varphi\circ \partial^\alpha =p\circ f'$. Also suppose $\varepsilon \in (0,1)$  be chosen.

(i) If $\alpha=0$ and $\varphi $ is stationary on the interval $[0,\varepsilon]$, then there exists $\varphi':\underline{X}\times \uparrow \mathbf{I}
\rightarrow \underline{E}$ satisfying the relations $\varphi'\circ \partial^0=f'$ and $p\circ \varphi'=\varphi$.

(ii) If $\alpha=1$ and $\varphi$ is stationary on the interval $[\varepsilon,1]$, then there exists $\varphi':\underline{X}\times \uparrow \mathbf{I}
\rightarrow \underline{E}$ satisfying the relations $\varphi'\circ \partial^1=f'$ and $p\circ \varphi'=\varphi$.
\end{cor}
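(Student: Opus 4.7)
The plan is to reduce to Theorem \ref{thm.2.3.2}, which asserts that $p$ (having dWCHP) enjoys the \emph{strict} dCHP with respect to $\underline{X}$ for every semistationary directed homotopy. The idea is to compress $\varphi$ by a directed reparametrization of $\uparrow\mathbf{I}$ that converts the stationarity hypothesis into genuine semistationarity, lift the compressed homotopy by Theorem \ref{thm.2.3.2}, and then transport the lift back along the inverse reparametrization.

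For part (i), I take the piecewise-linear strictly increasing self-homeomorphism $s\colon\uparrow\mathbf{I}\to\uparrow\mathbf{I}$ defined by $s(t)=2\varepsilon t$ for $t\in[0,1/2]$ and $s(t)=\varepsilon+2(1-\varepsilon)(t-1/2)$ for $t\in[1/2,1]$. Both $s$ and $s^{-1}$ are weakly increasing, hence d-maps from $\uparrow\mathbf{I}$ to itself. Setting $\bar\varphi:=\varphi\circ(\mathrm{id}_{\underline{X}}\times s)$, the essential point is that $s$ carries $[0,1/2]$ into the stationarity window $[0,\varepsilon]$, so $\bar\varphi(x,t)=\varphi(x,0)=p(f'(x))$ for all $t\in[0,1/2]$. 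Thus $\bar\varphi$ is lower semistationary and satisfies $\bar\varphi\circ\partial^0=p\circ f'$. Theorem \ref{thm.2.3.2} then supplies a directed lift $\bar\varphi'\colon\underline{X}\times\uparrow\mathbf{I}\to\underline{E}$ with $\bar\varphi'\circ\partial^0=f'$ and $p\circ\bar\varphi'=\bar\varphi$, and I set $\varphi':=\bar\varphi'\circ(\mathrm{id}_{\underline{X}}\times s^{-1})$. This is a d-map, $\varphi'\circ\partial^0=f'$ since $s^{-1}(0)=0$, and $p\circ\varphi'=\bar\varphi\circ(\mathrm{id}\times s^{-1})=\varphi\circ(\mathrm{id}\times s\circ s^{-1})=\varphi$, as required.

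Part (ii) is symmetric and can be handled with the same reparametrization $s$: now $s$ sends $[1/2,1]$ into the stationarity window $[\varepsilon,1]$, so $\bar\varphi:=\varphi\circ(\mathrm{id}\times s)$ is \emph{upper} semistationary with $\bar\varphi\circ\partial^1=p\circ f'$, and the $\alpha=1$ case of Theorem \ref{thm.2.3.2} produces the required lift, which is then pulled back by $s^{-1}$ exactly as above. Alternatively, (ii) may be deduced from (i) through the reflection endofunctor, which preserves the class of directed weak fibrations by Corollary \ref{cor.2.2.12} and converts homotopies on the opposite cylinder in the manner of Proposition \ref{prop.2.2.11}. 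I do not anticipate any serious obstacle: the only delicate point is the choice of $s$, where stationarity of $\varphi$ is what forces $\bar\varphi$ to be constant on a half-interval while the strict monotonicity of $s$ is what allows the semistationary lift to be pushed back to an exact lift of the original $\varphi$.
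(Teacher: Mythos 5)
Your argument is correct and is exactly the intended derivation: the paper states this as a corollary of Theorem \ref{thm.2.3.2} (its proof is in \cite{Pop4}), and the standard route is precisely your reparametrization $s$ turning stationarity on $[0,\varepsilon]$ (resp.\ $[\varepsilon,1]$) into lower (resp.\ upper) semistationarity, lifting exactly, and pulling back by $s^{-1}$, all of which is legitimate since $s$ and $s^{-1}$ are increasing homeomorphisms of $\uparrow\mathbf{I}$ and the cylinder's d-structure is componentwise. No gaps.
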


\subsection{An intrinsic charcterisation of dWCHP: directed semistationary lifting pair \cite{Pop4}}

Given a d-map $p:\underline{E}\rightarrow \underline{B}$ and $\alpha\in\{0,1\}$, we consider the following d-subspaces of the directed product $\underline{E}\times \underline{B}^{\uparrow \mathbf{I}}$:

$$\underline{B}^s_0=\{(e,\omega)\in \underline{E}\times \underline{B}^{\uparrow \mathbf{I}}|\omega(t)=p(e),(\forall)t\in [0,\frac{1}{2}]\},$$
and
$$ \underline{B}^s_1=\{(e,\omega)\in \underline{E}\times \underline{B}^{\uparrow \mathbf{I}}|\omega(t)=p(e),(\forall)t\in [\frac{1}{2},1]\}.$$
(The d-structure of $\underline{B}^{\uparrow \mathbf{I}}$ is given by the exponential law, $d\textbf{Top}(\uparrow \mathbf{I},d\mathbf{Top}(\uparrow \mathbf{I},\underline{B}))\approx d\mathbf{Top}(\uparrow \mathbf{I}\times \uparrow \mathbf{I},\underline{B})$,\cite{Grand1}).
\begin{defn}\label{def.2.4.1}A directed semistationary lifting pair for a directed map $p:\underline{E} \rightarrow \underline{B}$ consists of a pair of d-maps
\begin{equation}
\lambda^s_\alpha:\underline{B}^s_\alpha\rightarrow \underline{E}^{\uparrow \mathbf{I}},\alpha=0,1,
\end{equation}
satisfying the following conditions:
\begin{equation}
\lambda^s_\alpha(e,\omega)(\alpha)=e,
\end{equation}
\begin{equation}
p\circ \lambda^s_\alpha(e,\omega)=\omega,
\end{equation}
for each $(e,\omega)\in \underline{B}^s_\alpha.$
\end{defn}
\begin{thm}\label{thm.2.4.2}A directed map $p:\underline{E}\rightarrow \underline{B}$ is a directed weak fibration if and only if there exists a directed semistationary lifting pair for $p$.
\end{thm}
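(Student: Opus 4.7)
The plan is to mirror the proof of Theorem \ref{thm.2.1.4}, but at the crucial lifting step to invoke the strict covering result of Corollary \ref{cor.2.3.3} in place of the usual dHLP. The translation between external ``homotopy-with-lift'' data and internal ``lifting pair'' data rests on the exponential adjunction
\[
d\mathbf{Top}(\underline{X} \times \uparrow \mathbf{I}, \underline{B}) \;\cong\; d\mathbf{Top}(\underline{X}, \underline{B}^{\uparrow \mathbf{I}})
\]
which is used throughout Section 2.

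For the ``only if'' direction, assume $p$ is a directed weak fibration and fix $\alpha \in \{0,1\}$. I would take $\underline{X} = \underline{B}^s_\alpha$ as a \emph{universal} test space and consider the d-maps
\[
f'_\alpha:\underline{B}^s_\alpha \to \underline{E},\quad (e,\omega) \mapsto e, \qquad \varphi_\alpha:\underline{B}^s_\alpha \times \uparrow \mathbf{I} \to \underline{B},\quad ((e,\omega),t)\mapsto \omega(t).
\]
The defining condition of $\underline{B}^s_\alpha$ immediately yields $\varphi_\alpha \circ \partial^\alpha = p \circ f'_\alpha$ and shows that $\varphi_\alpha$ is stationary on $[0,\tfrac{1}{2}]$ (when $\alpha=0$) or on $[\tfrac{1}{2},1]$ (when $\alpha=1$). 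Corollary \ref{cor.2.3.3} therefore supplies a \emph{strict} directed lift $\varphi'_\alpha$ with $p \circ \varphi'_\alpha = \varphi_\alpha$ and $\varphi'_\alpha \circ \partial^\alpha = f'_\alpha$. The exponential transpose of $\varphi'_\alpha$ is the desired map $\lambda^s_\alpha:\underline{B}^s_\alpha\to\underline{E}^{\uparrow \mathbf{I}}$, and the two pointwise equalities in Definition \ref{def.2.4.1} are simply the transposes of these two equations.

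For the converse, assume a directed semistationary lifting pair $(\lambda^s_0,\lambda^s_1)$ is given. By Theorem \ref{thm.2.3.2} it suffices to produce a strict lift whenever $\varphi:\underline{X}\times\uparrow\mathbf{I}\to\underline{B}$ is semistationary of the type compatible with $\alpha$ and satisfies $\varphi\circ\partial^\alpha = p\circ f'$. Transposing $\varphi$ to $\hat\varphi:\underline{X}\to\underline{B}^{\uparrow\mathbf{I}}$, the semistationarity hypothesis together with $\varphi\circ\partial^\alpha = p\circ f'$ guarantees that the assignment $x\mapsto (f'(x),\hat\varphi(x))$ factors through a d-map $\underline{X}\to\underline{B}^s_\alpha$. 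Post-composing with $\lambda^s_\alpha$ and transposing back produces $\varphi':\underline{X}\times\uparrow\mathbf{I}\to\underline{E}$, and the two defining relations of the semistationary lifting pair translate precisely into $\varphi'\circ\partial^\alpha=f'$ and $p\circ\varphi'=\varphi$.

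The only real obstacle lies in the bookkeeping of directedness: at every stage one must verify that evaluation, the inclusion into $\underline{B}^s_\alpha$, and the two exponential transposes preserve the d-structure rather than being merely continuous. This is a routine check given the explicit descriptions of the d-structures on products, subspaces and function spaces recalled in Section 2 and in \cite{Grand1}, but it is the point at which care is required, since the analogous classical argument in $\mathbf{Top}$ does not automatically carry across to $d\mathbf{Top}$.
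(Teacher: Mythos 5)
Your proof is correct and follows exactly the route the paper intends (and that \cite{Pop4} uses): the ``only if'' direction via the universal test object $\underline{B}^s_\alpha$ together with the strict lifting of Corollary \ref{cor.2.3.3}, and the converse by transposing through the exponential law and invoking the semistationary characterisation of Theorem \ref{thm.2.3.2}. The directedness checks you defer to are indeed routine given the exponential-law d-structure on $\underline{B}^{\uparrow\mathbf{I}}$ and the subspace d-structure on $\underline{B}^s_\alpha$, so there is no gap.
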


\begin{cor}\label{cor.2.4.3}Let $p:\underline{E}\rightarrow \underline{B}$ be a directed weak fibration . For  $\varepsilon\in (0,1)$ consider the
following directed subspaces of $\underline{E}\times \underline{B}^{\uparrow \mathbf{I}}$:

$\underline{B}_{\varepsilon}=\{(e,\omega)\in \underline{E}\times \underline{B}^{\uparrow \mathbf{I}} |\ \omega(t)=p(e),(\forall)t\in [0,\varepsilon]\}$ and

$\underline{B}^{\varepsilon}=\{(e,\omega)\in \underline{E}\times \underline{B}^{\uparrow \mathbf{I}} |\ \omega(t)=p(e),(\forall)t\in [\varepsilon,1]\}$.

Then there exists a pair of directed maps $\lambda_{\varepsilon}:\underline{B}_{\varepsilon}\rightarrow \underline{E}^{\uparrow \mathbf{I}}$ and
$\lambda^{\varepsilon}:\underline{B}^{\varepsilon}\rightarrow \underline{E}^{\uparrow \mathbf{I}}$, satisfying $\lambda_\varepsilon(e,\omega)(0)=e$,
$p\circ \lambda_\varepsilon(e,\omega)=\omega$ and, respectively $\lambda^{\varepsilon}(e',\omega')(1)=e',p\circ \lambda^{\varepsilon}(e',\omega')=\omega'$.
\end{cor}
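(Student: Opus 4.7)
The plan is to obtain both $\lambda_\varepsilon$ and $\lambda^\varepsilon$ from Corollary \ref{cor.2.3.3} by a single exponential-transposition argument, taking the parameter space itself as the ``test'' d-space. That is, I would apply Corollary \ref{cor.2.3.3}(i) with $\underline{X}=\underline{B}_\varepsilon$ to produce a covering homotopy, and then read off $\lambda_\varepsilon$ as its exponential adjoint; the construction of $\lambda^\varepsilon$ is strictly parallel, using part (ii) of the same corollary with $\underline{X}=\underline{B}^\varepsilon$.

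Concretely for $\lambda_\varepsilon$: set $f':\underline{B}_\varepsilon\to\underline{E}$, $f'(e,\omega)=e$, and $\varphi:\underline{B}_\varepsilon\times\uparrow\mathbf{I}\to\underline{B}$, $\varphi((e,\omega),t)=\omega(t)$. The map $\varphi$ is a d-map, being the composite of the projection $\underline{B}_\varepsilon\times\uparrow\mathbf{I}\to\underline{B}^{\uparrow\mathbf{I}}\times\uparrow\mathbf{I}$ with the canonical evaluation, which is directed by the exponential law in $d\mathbf{Top}$ recalled after Definition \ref{def.2.4.1}. The defining condition of $\underline{B}_\varepsilon$ immediately delivers both hypotheses of Corollary \ref{cor.2.3.3}(i): on the one hand $(\varphi\circ\partial^0)(e,\omega)=\omega(0)=p(e)=(p\circ f')(e,\omega)$, and on the other $\varphi$ is stationary on $[0,\varepsilon]$ because $\omega(t)=p(e)$ there. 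The corollary then yields a d-map $\varphi':\underline{B}_\varepsilon\times\uparrow\mathbf{I}\to\underline{E}$ with $\varphi'\circ\partial^0=f'$ and $p\circ\varphi'=\varphi$, and I would define $\lambda_\varepsilon(e,\omega)(t):=\varphi'((e,\omega),t)$. The two required identities $\lambda_\varepsilon(e,\omega)(0)=e$ and $p\circ\lambda_\varepsilon(e,\omega)=\omega$ then reduce to $\varphi'\circ\partial^0=f'$ and $p\circ\varphi'=\varphi$ evaluated at a point, and the directedness of $\lambda_\varepsilon$ follows from that of $\varphi'$ by the same exponential law.

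The construction of $\lambda^\varepsilon$ is the mirror image, using Corollary \ref{cor.2.3.3}(ii) with $\alpha=1$: again take $f'(e',\omega')=e'$ and $\varphi((e',\omega'),t)=\omega'(t)$, observe that $(\varphi\circ\partial^1)(e',\omega')=\omega'(1)=p(e')$ and that $\varphi$ is now stationary on $[\varepsilon,1]$, and transpose the resulting lift $\varphi'$ to obtain $\lambda^\varepsilon$. I do not foresee any real obstacle; the only item requiring a moment's care is verifying that the tautological $\varphi$ is genuinely a directed map (so that Corollary \ref{cor.2.3.3} is applicable), after which everything is a routine unwinding of the $(\uparrow\mathbf{I},\,\cdot\,)$-adjunction.
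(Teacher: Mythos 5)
Your argument is correct and is essentially the intended one: the corollary is the $\varepsilon$-variant of Theorem \ref{thm.2.4.2}, and the standard derivation is exactly your ``universal example'' trick --- take $\underline{X}=\underline{B}_\varepsilon$ (resp.\ $\underline{B}^\varepsilon$) with the tautological section and evaluation homotopy, invoke Corollary \ref{cor.2.3.3}(i) (resp.\ (ii)), and transpose by the exponential law. All the hypotheses you check (directedness of the evaluation, the boundary condition $\omega(0)=p(e)$, stationarity on $[0,\varepsilon]$) are the right ones, so nothing is missing.
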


\begin{thm}\label{thm.2.4.4}
If $p:\underline{E}\rightarrow \underline{B}$ is a directed weak fibration with $\underline{E}\neq \emptyset$ and $\underline{B}$ is a directed path connected space, then $p$ is surjective and fibres of $p$ have all the same directed homotopy type.
\end{thm}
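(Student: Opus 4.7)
The plan is to split the argument into surjectivity and fibre-equivalence, reducing the latter via pullback to the case of base $\uparrow\mathbf{I}$, and there exploiting its directed contractibility. For surjectivity, fix any $e_0 \in \underline{E}$, let $b_0 = p(e_0)$, and take an arbitrary $b \in \underline{B}$. Directed path-connectedness supplies a finite zigzag $b_0 = d_0, d_1, \ldots, d_n = b$ with consecutive pairs joined by directed paths in one of the two orientations. Proceeding inductively, given $e_{i-1} \in p^{-1}(d_{i-1})$, I apply Corollary \ref{cor.2.2.5} (with $\alpha = 0$ or $\alpha = 1$ according to the direction of the connecting path) to lift it into $\underline{E}$; the relevant endpoint of the lift lies in the d-path component of $e_{i-1}$, while the other endpoint belongs to $p^{-1}(d_i)$ and serves as $e_i$. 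Eventually $e_n \in p^{-1}(b)$, proving $p$ surjective.

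For the fibre statement, by the same zigzag and transitivity of $\simeq_d$, it suffices to show that if $a: \uparrow\mathbf{I} \to \underline{B}$ is a directed path from $b_0$ to $b_1$, then $F_{b_0} = p^{-1}(b_0) \simeq_d p^{-1}(b_1) = F_{b_1}$. Pulling $p$ back along $a$ via Proposition \ref{prop.2.2.13} yields a directed weak fibration $p_a: \underline{E}_a \to \uparrow\mathbf{I}$ whose fibres over $0$ and $1$ are canonically identified, as d-spaces, with $F_{b_0}$ and $F_{b_1}$. It therefore suffices to prove the special case: if $q: \underline{X} \to \uparrow\mathbf{I}$ is a directed weak fibration with $\underline{X} \neq \emptyset$, then $X_0 := q^{-1}(0)$ and $X_1 := q^{-1}(1)$ are d-homotopy equivalent.

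For this, I show $\underline{X}$ is d-homotopy equivalent to both $X_0$ and $X_1$ via the inclusions $\iota_\alpha: X_\alpha \hookrightarrow \underline{X}$. Define $\varphi: \underline{X} \times \uparrow\mathbf{I} \to \uparrow\mathbf{I}$ by $\varphi(x,t) = q(x)\cdot t$; this is monotone in both variables (hence a d-map) and satisfies $\varphi \circ \partial^1 = q = q \circ \mathrm{id}_{\underline{X}}$. The dWCHP with $\alpha = 1$ and $f' = \mathrm{id}_{\underline{X}}$ gives $\varphi': \underline{X} \times \uparrow\mathbf{I} \to \underline{X}$ with $q\varphi' = \varphi$ and $\varphi' \circ \partial^1 \stackunder{(q)}{\simeq_d} \mathrm{id}_{\underline{X}}$. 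Since $q\varphi'(x,0) = 0$, the formula $r_0(x) := \varphi'(x,0)$ defines a d-map $r_0: \underline{X} \to X_0$. Reading $\varphi'$ itself as a directed homotopy $\iota_0 r_0 \preceq \varphi'(\cdot,1)$ and chaining it with the vertical d-homotopy yields $\iota_0 r_0 \simeq_d \mathrm{id}_{\underline{X}}$.

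The main obstacle is verifying the reverse composition $r_0 \iota_0 \simeq_d \mathrm{id}_{X_0}$ \emph{inside} $X_0$, not merely in $\underline{X}$. The decisive observation is that for $y \in X_0$ one has $q\varphi'(y,t) = 0 \cdot t = 0$, so $\varphi'|_{X_0 \times \uparrow\mathbf{I}}$ factors through $X_0$ and provides a d-homotopy there from $r_0 \iota_0$ to $\varphi'(\cdot,1)|_{X_0}$; furthermore every vertical d-homotopy in the zigzag $\varphi'(\cdot,1) \stackunder{(q)}{\simeq_d} \mathrm{id}_{\underline{X}}$ stays over $0$ on points of $X_0$, so its restriction to $X_0$ remains inside $X_0$. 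A parallel construction with $\varphi(x,t) = q(x)(1-t) + t$, $\alpha = 0$, and $\iota_1: X_1 \hookrightarrow \underline{X}$ yields $\underline{X} \simeq_d X_1$. Composing the two equivalences gives $F_{b_0} \simeq_d X_0 \simeq_d \underline{X} \simeq_d X_1 \simeq_d F_{b_1}$, completing the proof.
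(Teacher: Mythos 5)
Your argument is correct. Note, however, that the paper you were given does not actually contain a proof of Theorem \ref{thm.2.4.4}: Section 2 merely recalls the statement from \cite{Pop4}, so there is no in-text proof to compare against. The route one would expect from the surrounding machinery (and from Dold's undirected original) is fibre translation via the semistationary lifting pair of Theorem \ref{thm.2.4.2}/Corollary \ref{cor.2.4.3}: given a d-path $a$ from $b_0$ to $b_1$, one lifts the stationary-extended path from each point of $p^{-1}(b_0)$ to get a map $F_{b_0}\rightarrow F_{b_1}$, builds the reverse map by lifting with the other endpoint fixed (this is where the non-reversibility of d-paths forces the use of both $\lambda^s_0$ and $\lambda^s_1$), and then checks the composites are vertically d-homotopic to identities. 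Your proof is genuinely different: you pull $p$ back along $a$ (Proposition \ref{prop.2.2.13}), reduce to a directed weak fibration over $\uparrow\mathbf{I}$, and exploit the two d-contractions $\,(x,t)\mapsto q(x)t$ and $(x,t)\mapsto q(x)(1-t)+t$ to show both end fibres are d-deformation retract-like subobjects of the total space. This is cleaner in that it avoids the path-concatenation homotopies of the translation argument, and the two delicate points are handled properly: the one-sidedness of directed paths is absorbed by invoking the dWCHP once with $\alpha=1$ and once with $\alpha=0$, and the composite $r_\alpha\iota_\alpha\stackunder{}{\simeq_d}\mathrm{id}_{X_\alpha}$ is verified \emph{inside} the fibre by observing that both $\varphi'$ and every vertical homotopy in the zigzag $\varphi'\circ\partial^\alpha\stackunder{(q)}{\simeq_d}\mathrm{id}$ project to the constant $\alpha$ on points of $X_\alpha$, hence restrict to $X_\alpha$. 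The surjectivity step via Corollary \ref{cor.2.2.5} along the zigzag is also sound, since only the existence of a lift (not the precise location of its $\alpha$-endpoint) is needed there. What the translation approach buys in exchange is an explicit description of the equivalence $F_{b_0}\simeq_d F_{b_1}$ as "transport along $a$", which is the form needed for functoriality statements; your version establishes the equivalence more abstractly through the intermediate space $\underline{E}_a$.
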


\begin{rem}\label{rem.2.4.5}A theorem similar to Theorem \ref{thm.2.4.2} exists also in the undirected case. But in that case it is sufficient to have a  lifting function for the stationary path on the interval $[0,\frac{1}{2}]$ since there the spaces $B^s_0$ and $B^s_1$ are homeomorphic by the correspondence $(e,\omega)\in B^s_0\rightarrow (e,\omega^{op})\in B^s_1$. And if $\lambda^s_0$ exists, then $\lambda^s_1$ is defined by $(\lambda^s_1(e,\omega)=(\lambda^s_0(e,\omega^{op}))^{op}$. In the general directed case, for an arbitary directed map $p:\underline{E}\rightarrow \underline{B}$, the spaces $\underline{B}^s_0$ and $\underline{B}^s_1$ are independent. But if $p$ is a directed weak fibration, then we have the following theorem.
\end{rem}
\begin{thm}\label{thm.2.4.6}If $p:\underline{E} \rightarrow \underline{B}$ is a directed weak fibration, then the d-spaces $\underline{B}^s_0$ and $\underline{B}^s_1$ are d-homotopy equivalent.
\end{thm}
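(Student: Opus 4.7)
The plan is to construct explicit d-maps $F$ and $G$ using the semistationary lifting pair and verify that they are d-homotopy inverses via d-homotopies routed through a common ``endpoint contraction'' intermediate.

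First, I apply Theorem \ref{thm.2.4.2} to fix a semistationary lifting pair $(\lambda_0^s, \lambda_1^s)$. Let $\rho, \sigma : \uparrow\mathbf{I} \to \uparrow\mathbf{I}$ be the weakly increasing reparametrisations $\rho(t) = \min(2t, 1)$ and $\sigma(t) = \max(0, 2t-1)$, and define
\[
F: \underline{B}_0^s \to \underline{B}_1^s, \quad F(e, \omega) = (\lambda_0^s(e, \omega)(1),\, \omega \circ \rho),
\]
\[
G: \underline{B}_1^s \to \underline{B}_0^s, \quad G(e, \omega) = (\lambda_1^s(e, \omega)(0),\, \omega \circ \sigma).
\]
Well-definedness is immediate: $(\omega \circ \rho)(t) = \omega(1) = p(\lambda_0^s(e, \omega)(1))$ for $t \in [1/2, 1]$, and symmetrically for $G$. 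Both $F$ and $G$ are d-maps because the lifting pair is a d-map, evaluation at an endpoint is a d-map $\underline{E}^{\uparrow\mathbf{I}} \to \underline{E}$, and precomposition by a fixed weakly increasing map is a d-map on $\underline{B}^{\uparrow\mathbf{I}}$.

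To prove $G \circ F \simeq_d \mathrm{id}$ in $\underline{B}_0^s$, I plan to introduce the auxiliary d-map $E_1(e, \omega) = (\lambda_0^s(e, \omega)(1), c_{\omega(1)})$ (with $c_y$ the constant path at $y$) and exhibit two d-homotopies $\mathrm{id} \preceq_d E_1$ and $G \circ F \preceq_d E_1$; these together give the $\simeq_d$-equivalence by the usual zig-zag. With $m_u(t) = \max(u, t)$, set
\[
\Theta((e, \omega), u) = (\lambda_0^s(e, \omega)(u),\, \omega \circ m_u),
\]
\[
\Theta''((e, \omega), u) = \bigl(\lambda_1^s(\lambda_0^s(e, \omega)(1), \omega \circ \rho)(\sigma(u)),\, (\omega \circ \rho \circ \sigma) \circ m_u\bigr).
\]
At $u = 0$ these evaluate to $(e, \omega)$ and $G \circ F(e, \omega)$ respectively, and at $u = 1$ both collapse to $E_1(e, \omega)$ because $m_1 \equiv 1$ and $\sigma(1) = 1$. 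The key verification is that $\Theta(\cdot, u)$ and $\Theta''(\cdot, u)$ land in $\underline{B}_0^s$ for every $u$: on $t \in [0, 1/2]$, if $u \le 1/2$ then $\max(u, t) \le 1/2$ so $(\omega \circ m_u)(t) = p(e) = \omega(u) = p(\lambda_0^s(e, \omega)(u))$ by the stationarity of $\omega$, while if $u > 1/2$ then $\max(u, t) = u$ and the equality is trivial. The same case analysis, applied to $\omega \circ \rho \circ \sigma$ (which is stationary on $[0, 1/2]$ because $\rho \circ \sigma$ vanishes there), handles $\Theta''$, using that the first coordinate is reparametrised by $\sigma$ so that its projection $(\omega \circ \rho)(\sigma(u)) = (\omega \circ \rho \circ \sigma)(u)$ matches the stationary value of the second coordinate.

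For $F \circ G \simeq_d \mathrm{id}$ in $\underline{B}_1^s$ I run the dual argument with $n_u(t) = \min(u, t)$ replacing $m_u$ and the auxiliary map $E_0(e, \omega) = (\lambda_1^s(e, \omega)(0), c_{\omega(0)})$: the analogous d-homotopies yield $E_0 \preceq_d \mathrm{id}$ via $(\lambda_1^s(e, \omega)(u), \omega \circ n_u)$ and $E_0 \preceq_d F \circ G$ via $(\lambda_0^s(\lambda_1^s(e, \omega)(0), \omega \circ \sigma)(\rho(u)), (\omega \circ \sigma \circ \rho) \circ n_u)$, from which $F \circ G \simeq_d \mathrm{id}_{\underline{B}_1^s}$ follows. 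The main obstacle, and the reason the undirected argument of Remark \ref{rem.2.4.5} cannot be transplanted, is the absence of path reversal: one cannot swap $\underline{B}_0^s$ and $\underline{B}_1^s$ by $\omega \mapsto \omega^{op}$, so the homotopies must be built entirely from forward-direction reparametrisations. The $m_u$/$n_u$ device is the crucial trick, since $\max(u, t)$ and $\min(u, t)$ are weakly increasing in each variable (hence d-maps on the cylinder) and their images remain inside $[0, 1/2]$ (resp.\ $[1/2, 1]$) precisely when the parameter is in the stationary half, so the semistationarity constraint is preserved throughout the interpolation while the first coordinate slides along the lift.
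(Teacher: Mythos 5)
Your proof is correct: the maps $F$, $G$ built from the semistationary lifting pair of Theorem \ref{thm.2.4.2}, together with the $m_u=\max(u,\cdot)$ and $n_u=\min(u,\cdot)$ interpolations through the endpoint-contraction maps $E_1$, $E_0$, do stay inside $\underline{B}^s_0$ (resp.\ $\underline{B}^s_1$) precisely because of the semistationarity of $\omega$ and of $\omega\circ\rho\circ\sigma$, and the zig-zags $\mathrm{id}\preceq E_1\succeq G\circ F$ and $E_0\preceq \mathrm{id}$, $E_0\preceq F\circ G$ give the required $\simeq_d$ equivalences. This is essentially the intended route: the paper only recalls the statement from \cite{Pop4} without reproducing the proof, but your argument is exactly the directed analogue of Theorem \ref{thm.2.1.4}(ii) that Theorem \ref{thm.2.4.2} and Remark \ref{rem.2.4.5} set up, replacing the forbidden $\omega\mapsto\omega^{op}$ by forward reparametrisations.
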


\begin{cor}\label{cor.2.4.7} If $p:\underline{E}\rightarrow \underline{B}$ is a directed weak fibration and $\varepsilon\in (0,1)$, then the d-spaces
$\underline{B}_{\varepsilon}$ and $\underline{B}^{\varepsilon}$ are d-homotopy equivalent.
\end{cor}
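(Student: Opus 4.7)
The plan is to reduce the statement to Theorem~\ref{thm.2.4.6} by showing that $\underline{B}_\varepsilon$ is in fact d-isomorphic to $\underline{B}^s_0=\underline{B}_{1/2}$, and likewise $\underline{B}^\varepsilon\cong\underline{B}^s_1=\underline{B}^{1/2}$. Once these isomorphisms are in place, composing them with the d-homotopy equivalence $\underline{B}^s_0\simeq_d\underline{B}^s_1$ delivered by Theorem~\ref{thm.2.4.6} yields $\underline{B}_\varepsilon\simeq_d\underline{B}^\varepsilon$ immediately.

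To produce the isomorphism $\underline{B}_\varepsilon\cong\underline{B}^s_0$, I would introduce the piecewise-linear reparametrization $\psi:\uparrow\mathbf{I}\to\uparrow\mathbf{I}$ sending $[0,\tfrac{1}{2}]$ linearly onto $[0,\varepsilon]$ and $[\tfrac{1}{2},1]$ linearly onto $[\varepsilon,1]$. Since $\psi$ is (weakly) increasing it is a d-map on $\uparrow\mathbf{I}$, and the same holds for its inverse $\psi^{-1}$. The precomposition operator $\omega\mapsto\omega\circ\psi$ is then a d-endomorphism of $\underline{B}^{\uparrow\mathbf{I}}$: via the exponential law of \cite{Grand1}, a d-path $\Omega:\uparrow\mathbf{I}\to\underline{B}^{\uparrow\mathbf{I}}$ corresponds to a d-map $\tilde\Omega:\uparrow\mathbf{I}\times\uparrow\mathbf{I}\to\underline{B}$, and its image under the operator corresponds to $\tilde\Omega\circ(\mathrm{id}\times\psi)$, which is again a d-map.

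The assignment $\Psi_\varepsilon(e,\omega)=(e,\omega\circ\psi)$ then restricts to a d-map $\underline{B}_\varepsilon\to\underline{B}^s_0$: from $\omega(t)=p(e)$ on $[0,\varepsilon]$ together with $\psi([0,\tfrac12])\subset[0,\varepsilon]$ one reads off $(\omega\circ\psi)(t)=p(e)$ on $[0,\tfrac12]$. Precomposition by $\psi^{-1}$ provides the two-sided inverse, landing correctly in $\underline{B}_\varepsilon$ because $\psi^{-1}([0,\varepsilon])=[0,\tfrac12]$. The same $\psi$ and the analogous check on $[\tfrac12,1]$ and $[\varepsilon,1]$ produce the isomorphism $\underline{B}^\varepsilon\cong\underline{B}^s_1$. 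Concatenating, $\underline{B}_\varepsilon\cong\underline{B}^s_0\simeq_d\underline{B}^s_1\cong\underline{B}^\varepsilon$.

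There is no substantial obstacle here: the whole content of the argument is that the constraint ``stationary on $[0,\varepsilon]$'' differs from ``stationary on $[0,\tfrac12]$'' only by a monotone change of variable on $\uparrow\mathbf{I}$. The one step that rewards a bit of care is the exponential-law computation confirming that precomposition by a d-map on $\uparrow\mathbf{I}$ descends to a d-map on the internal hom $\underline{B}^{\uparrow\mathbf{I}}$; everything else is routine bookkeeping on the two halves of the unit interval.
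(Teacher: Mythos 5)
Your argument is correct and is essentially the intended one: the paper states Corollary \ref{cor.2.4.7} (recalled from \cite{Pop4}) as an immediate consequence of Theorem \ref{thm.2.4.6}, and reducing to that theorem by the increasing piecewise-linear reparametrization $\psi$ carrying $[0,\tfrac12]$ onto $[0,\varepsilon]$ (so that $(e,\omega)\mapsto(e,\omega\circ\psi)$ gives d-isomorphisms $\underline{B}_{\varepsilon}\cong\underline{B}^s_0$ and $\underline{B}^{\varepsilon}\cong\underline{B}^s_1$) is exactly the natural way to do so, matching how Corollary \ref{cor.2.4.3} is obtained from Theorem \ref{thm.2.4.2}. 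The one point that needed care --- that precomposition by a monotone self-map of $\uparrow\mathbf{I}$ is a d-map of $\underline{B}^{\uparrow\mathbf{I}}$ via the exponential law --- you handle correctly.
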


\subsection{Directed fiber homotopy equivalence dFHE \cite{Pop4}}

\begin{defn}\label{def.2.5.1}Let $p:\underline{E}\rightarrow \underline{B}$, $p':\underline{E}'\rightarrow \underline{B}$ and $f:E\rightarrow E'$ directed maps with $p'\circ f=p$ ($f$ is a morphism from $p$ to $p'$). We say that $f$ is a directed fibre homotopy equivalence if there exists $g:\underline{E}'\rightarrow \underline{E}$ a morphism from $p'$ to $p$, such that $g\circ f \stackunder{(p)}{\simeq_d}id_{\underline{E}}$ and $f\circ g\stackunder{(p')}{\simeq_d}id_{\underline{E}'}$.
\end{defn}

\begin{thm}\label{thm.2.5.2}Let $p:\underline{E}\rightarrow \underline{B}$ , $p':\underline{E}'\rightarrow \underline{B}$ be directed weak fibrations.
Then a directed map $f:\underline{E}\rightarrow \underline{E}'$ over $\underline{B}$, $p'\circ f=p$, is a directed fibre homotopy equivalence if and only
if it is an ordinary directed homotopy equivalence.
\end{thm}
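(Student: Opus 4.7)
The ``only if'' direction is immediate from Definition~\ref{def.2.5.1}: a directed fibre homotopy equivalence is, after forgetting the over-$\underline{B}$ condition and the verticality of the homotopies, an ordinary d-homotopy equivalence.

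For the nontrivial ``if'' direction, let $h:\underline{E}'\to\underline{E}$ be any d-homotopy inverse of $f$. The plan is to (a) replace $h$ by a d-map $g$ lying over $\underline{B}$, and then (b) promote the resulting ordinary d-homotopies $g\circ f\simeq_d\mathrm{id}_{\underline{E}}$ and $f\circ g\simeq_d\mathrm{id}_{\underline{E}'}$ to vertical ones.

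For step (a): applying $p'$ to $f\circ h\simeq_d\mathrm{id}_{\underline{E}'}$ and using $p'\circ f=p$ yields a zigzag $p\circ h\simeq_d p'$ of d-homotopies in $\underline{B}$. I would lift this zigzag one step at a time over $p$, starting from the d-map $h$, using the dWCHP (Definition~\ref{def.2.2.2}) for each ``$\preceq$''-step and Proposition~\ref{prop.2.2.11} for each ``$\succeq$''-step. The terminal map of the resulting chain of lifts is the desired $g:\underline{E}'\to\underline{E}$ with $p\circ g=p'$, and the lifted homotopies themselves (together with their vertical-equivalence endpoints) combine to give an ordinary $g\simeq_d h$. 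Hence $g$ is again a d-homotopy inverse of $f$.

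For step (b): consider $u=f\circ g:\underline{E}'\to\underline{E}'$. Then $p'\circ u=p\circ g=p'$, so $u$ lies over $\underline{B}$, and $u\simeq_d\mathrm{id}_{\underline{E}'}$. For each step $K:\underline{E}'\times\uparrow\mathbf{I}\to\underline{E}'$ of this zigzag, the composite $p'\circ K$ is a self-homotopy of $p'$. I would relift $p'\circ K$ using the dWCHP of $p'$ applied to $\underline{E}'$ with starting map $\mathrm{id}_{\underline{E}'}$ (at the appropriate endpoint $\alpha\in\{0,1\}$), and with Proposition~\ref{prop.2.2.11} at the oppositely oriented endpoint. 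This trades each ordinary step for lifts whose endpoints are related to $u$ and to $\mathrm{id}_{\underline{E}'}$ only up to the vertical equivalence $\stackunder{(p')}{\simeq_d}$ produced by those lifting theorems. Splicing the original $K$ with these relifts, respecting the $\preceq/\succeq$ pattern of the zigzag, yields $f\circ g\stackunder{(p')}{\simeq_d}\mathrm{id}_{\underline{E}'}$; the symmetric argument applied to $g\circ f$ using the dWCHP of $p$ gives $g\circ f\stackunder{(p)}{\simeq_d}\mathrm{id}_{\underline{E}}$.

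The main obstacle lies in step~(b). In Dold's classical argument one freely reverses homotopies in order to convert a ``loop'' $p'\circ K$ at $p'$ into a vertical self-homotopy. In the directed setting that reversibility is unavailable, so the conversion from ordinary to vertical has to be carried out through the careful interplay between Definition~\ref{def.2.2.2} (which lifts along $\uparrow\mathbf{I}$) and Proposition~\ref{prop.2.2.11} (which lifts along $(\uparrow\mathbf{I})^{op}$). Keeping precise track of the $\preceq$/$\succeq$ alternation in every zigzag, and of which endpoint of each lift is only vertically---rather than strictly---equal to the prescribed data, is the delicate bookkeeping in which the directed version genuinely departs from the undirected Dold theorem.
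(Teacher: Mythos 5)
First, a caveat: the paper under review does not actually prove Theorem~\ref{thm.2.5.2}; it only recalls it from \cite{Pop4}, so your argument has to be judged on its own terms against the Dold-type proof it is modelled on. Your ``only if'' direction is fine, and so is step~(a): lifting the zigzag $p\circ h\simeq_d p'$ over $p$ one elementary homotopy at a time, choosing $\alpha=0$ or $\alpha=1$ in Definition~\ref{def.2.2.2} according to the orientation of each step, does produce a d-homotopy inverse $g$ of $f$ with $p\circ g=p'$ (you do not even need Proposition~\ref{prop.2.2.11} here, since the dWCHP already allows both faces).

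The genuine gap is in step~(b), and it is not ``delicate bookkeeping''. A vertical d-homotopy must satisfy $p'\varphi(x,t)=p'(x)$ for \emph{every} $t$, and a concatenation of homotopies is vertical only if each piece is; so your plan of ``splicing the original $K$ with these relifts'' cannot yield $f\circ g\stackunder{(p')}{\simeq_d}\mathrm{id}_{\underline{E}'}$, because the elementary homotopies $K$ of the ordinary zigzag are not vertical to begin with (their intermediate stages $f_i$ need not even lie over $\underline{B}$, so $p'\circ K$ is not a self-homotopy of $p'$ as you assert). In the classical argument the non-vertical homotopy is not spliced but \emph{replaced}: one forms the loop $p'K\ast\overline{p'K}$, contracts it to the constant homotopy rel endpoints, lifts that two-parameter contraction, and reads the vertical homotopy off the far edge of the lifted square. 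Both ingredients of that step --- the reversal $\overline{p'K}$ and the standard contraction of $\omega\ast\omega^{-}$, whose parametrization increases and then decreases --- fail to be directed, so the one genuinely directed-specific difficulty, namely producing from a map $d$ over $\underline{B}$ with $d\simeq_d\mathrm{id}$ a map $e$ over $\underline{B}$ with $e\circ d\stackunder{(p)}{\simeq_d}\mathrm{id}$, is exactly the step you identify and then leave unexecuted. Until that key lemma is supplied (presumably via the semistationary lifting machinery of Theorems~\ref{thm.2.3.2} and~\ref{thm.2.4.2}, which is what replaces path reversal in this theory), the ``if'' direction is not proved; the remaining juggling needed to get a single two-sided vertical inverse is standard.
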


\begin{defn}\label{def.2.5.3} A d-map $p:\underline{E}\rightarrow \underline{B}$ is called \emph{d-shrinkable} if one of the following equivalent properties is satisfied:

(a) $p$ is a directed fibre homotopy equivalence (viewed as a d-map over $\underline{B}$ into $id_{\underline{B}}$),

(b) $p$ is directed dominated by $id_{\underline{B}}$ ,

(c) there is a d-section $s:\underline{B}\rightarrow \underline{E}$ , $p\circ s=id_{\underline{B}}$, and a vertical homotopy d-equivalence
$s\circ p\simeq_d id_{\underline{E}}.$

\end{defn}

By Theorem \ref{thm.2.5.2} we obtain:

\begin{cor}\label{cor.2.5.4} If $p:\underline{E}\rightarrow \underline{B}$ is directed weak fibration, then $p$ is d-shrinkable if and only if $p$ is
a directed homotopy equivalence.
\end{cor}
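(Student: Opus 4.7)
The plan is to reduce the corollary directly to Theorem \ref{thm.2.5.2} by taking the second directed weak fibration to be the identity $id_{\underline{B}}:\underline{B}\rightarrow \underline{B}$. First I would note that $id_{\underline{B}}$ is tautologically a directed Hurewicz fibration, since any directed homotopy into $\underline{B}$ serves as its own lift along the identity; in particular $id_{\underline{B}}$ is a directed weak fibration, so both hypotheses of Theorem \ref{thm.2.5.2} are met once we take $p' = id_{\underline{B}}$.

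Next, since $id_{\underline{B}}\circ p = p$, the map $p$ itself qualifies as a morphism over $\underline{B}$ from the fibration $p:\underline{E}\rightarrow \underline{B}$ to $id_{\underline{B}}$. Applying Theorem \ref{thm.2.5.2} with $p' = id_{\underline{B}}$ and $f = p$ yields: $p$, viewed as a morphism over $\underline{B}$ into $id_{\underline{B}}$, is a directed fibre homotopy equivalence if and only if it is an ordinary directed homotopy equivalence.

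By clause (a) of Definition \ref{def.2.5.3}, being a dFHE in exactly this sense is the definition of d-shrinkability. Hence the two conditions ``$p$ is d-shrinkable'' and ``$p$ is a directed homotopy equivalence'' are equivalent under the given hypothesis that $p$ is a directed weak fibration. Since the corollary amounts to little more than a relabelling of Theorem \ref{thm.2.5.2}, no substantive obstacle arises; the only item worth verifying is the (immediate) fact that the identity map is a directed weak fibration, together with the observation that $p$ itself plays the role of the comparison morphism $f$ in Theorem \ref{thm.2.5.2}.
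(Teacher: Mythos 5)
Your proof is correct and is exactly the paper's argument: the corollary is stated immediately after Definition \ref{def.2.5.3} with the remark ``By Theorem \ref{thm.2.5.2} we obtain,'' i.e.\ one applies Theorem \ref{thm.2.5.2} with $p'=id_{\underline{B}}$ (trivially a directed weak fibration) and $f=p$, and then invokes clause (a) of the definition of d-shrinkable. Nothing further is needed.
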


\begin{cor}\label{cor.2.5.5}  Let $p:\underline{E}\rightarrow \underline{B}$, $p':\underline{E}'\rightarrow \underline{B}$ be directed weak fibrations
and $f:\underline{E}\rightarrow \underline{E}'$ a d-map over $B$, $p'\circ f=p$. Suppose that $\underline{B}$ is directed contractible to a point
$b\in \underline{B}$ (i.e., $id_{\underline{B}}\simeq_d c_b$, where $c_b$ is the constant map $c_b(\underline{B})=b$), and that the restriction \qquad
$f_b:\uparrow p^{-1}(b)\rightarrow \uparrow p'^{-1}(b)$  is a (ordinary) directed homotopy equivalence. Then $f$ is a directed fibre homotopy equivalence.
\end{cor}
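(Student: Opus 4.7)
The plan is, by Theorem \ref{thm.2.5.2}, to reduce the claim to showing that $f$ is an ordinary directed homotopy equivalence: since both $p$ and $p'$ are directed weak fibrations, that theorem automatically upgrades such an equivalence to a directed fibre homotopy equivalence over $\underline{B}$.

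To establish that $f$ is a d-homotopy equivalence, the key step is the following lemma: \emph{for any directed weak fibration $q:\underline{Y}\to\underline{B}$ whose base is d-contractible to $b$, the fibre inclusion $j:\uparrow q^{-1}(b)\hookrightarrow\underline{Y}$ is a directed homotopy equivalence.} Granting the lemma for both $p$ and $p'$, the inclusions $i:\uparrow p^{-1}(b)\hookrightarrow\underline{E}$ and $i':\uparrow p'^{-1}(b)\hookrightarrow\underline{E}'$ become d-homotopy equivalences; the identity $f\circ i=i'\circ f_b$ together with the hypothesis on $f_b$ then presents $f\circ i$ as a composition of d-homotopy equivalences, and the two-out-of-three property for d-homotopy equivalences forces $f$ itself to be a d-homotopy equivalence.

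For the lemma, fix a zigzag $id_{\underline{B}}=H_0,H_1,\ldots,H_n=c_b$ witnessing the d-contractibility, each link being either a $\preceq$- or a $\succeq$-step. I would lift it through $q$ by induction on $k$: at the $k$-th link, the d-map $H_{k\to k+1}\circ(q\times id_{\uparrow\mathbf{I}}):\underline{Y}\times\uparrow\mathbf{I}\to\underline{B}$ matches $q$ composed with the previously built lift at the relevant face, so Definition \ref{def.2.2.2} (with $\alpha$ chosen to match the direction of the link) supplies a d-lift that is vertically $\stackunder{(q)}{\simeq_d}$-compatible with its predecessor. Concatenating these lifts and evaluating at the terminal time yields a d-map $r:\underline{Y}\to\uparrow q^{-1}(b)$; reading the constructed chain in the other direction gives a $\simeq_d$-zigzag showing $j\circ r\simeq_d id_{\underline{Y}}$.

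The main obstacle is the reverse direction $r\circ j\simeq_d id_{\uparrow q^{-1}(b)}$. Restricting the constructed zigzag to $\uparrow q^{-1}(b)\times\uparrow\mathbf{I}$ produces a chain of d-homotopies \emph{in $\underline{Y}$} between a vertical perturbation of $j$ and $j\circ r\circ j$, but its $q$-image traces the d-loops $\beta_k(t)=H_k(b,t)$ at $b$, not the constant loop, so \emph{a priori} this homotopy does not lie in the fibre. To push it into $\uparrow q^{-1}(b)$ I would use the d-contractibility of $\underline{B}$ once more to null-homotope the $\beta_k$ within $\underline{B}$, and then invoke the intrinsic characterisation of Theorem \ref{thm.2.4.2} (refined on the semistationary side by Corollary \ref{cor.2.3.3}) to lift these null-homotopies into $\underline{Y}$, deforming the restricted zigzag into one whose $q$-image is identically $b$. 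Propagating these corrections coherently along the original zigzag while preserving verticality at the joints is the technical heart of the proof; once carried out, the formal consequences above finish the argument.
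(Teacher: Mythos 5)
Your overall strategy -- reduce via Theorem \ref{thm.2.5.2} to showing that $f$ is an ordinary d-homotopy equivalence, prove that the fibre inclusions $i,i'$ are d-homotopy equivalences by lifting the contraction of $\underline{B}$, and conclude by two-out-of-three from $f\circ i=i'\circ f_b$ -- is the standard Dold-type argument and is the right frame for this corollary (which the present paper only quotes from \cite{Pop4}). The first half of your key lemma is also sound: lifting the zigzag $id_{\underline{B}}\simeq_d c_b$ link by link through the dWCHP (choosing $\alpha$ to match the orientation of each link, and accepting that consecutive lifts agree only up to vertical d-homotopy) does produce $r:\underline{Y}\rightarrow\uparrow q^{-1}(b)$ with $j\circ r\simeq_d id_{\underline{Y}}$.

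The gap is exactly where you place it, but the repair you propose does not work as stated. To push the restricted zigzag into the fibre you want to null-homotope the d-loops $\beta_k(t)=H_k(b,t)$ at $b$ \emph{rel endpoints}; however, d-contractibility only supplies a \emph{free} zigzag $id_{\underline{B}}\simeq_d c_b$, and the classical conversion of a free null-homotopy into a based one conjugates by the reversed track of the basepoint -- an operation unavailable in $d\mathbf{Top}$, since $\beta_k^{op}$ need not be a d-path. So ``use the d-contractibility once more'' does not by itself produce the based d-null-homotopies you need, and Theorem \ref{thm.2.4.2}/Corollary \ref{cor.2.3.3} have nothing to lift until those are supplied. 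The way to close this is not to patch the restricted homotopy but to change the construction of the lemma: realise $\uparrow q^{-1}(b)$ as the pullback of $q$ along $\{b\}\hookrightarrow\underline{B}$ and prove that d-homotopic maps into the base induce d-fibre-homotopy-equivalent pullbacks of a directed weak fibration (Proposition \ref{prop.2.2.13} gives that the pullbacks are again d-weak fibrations; the comparison of the two ends of the cylinder requires lifting a \emph{decreasing} reparametrisation, which is precisely what Proposition \ref{prop.2.2.11}/Corollary \ref{cor.2.2.12} are for). Applied to $id_{\underline{B}}\simeq_d c_b$ this trivialises $p$ and $p'$ over $\underline{B}$ and yields both directions of the fibre-inclusion equivalence at once. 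As written, the ``technical heart'' you defer is not a routine verification but the one point where directedness genuinely obstructs the classical argument, so the proof is incomplete there.
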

\section{The new results}
\subsection{The transition from local to global of dFHE - Dold's type theorems  }
In order to use Corollary \ref{cor.2.5.5} to prove two important Dold type theorems for the directed fibre homotopy equivalences we need some new
notions and preparatory results.
\begin{defn}\label{def.3.1.1}
For a d-space $\underline{B}$ a \emph{directed halo} (\emph{d-halo}) around $\uparrow A\subset \underline{B}$ is a d-subset $\uparrow V$ of $\underline{B}$ if there exist two d-maps $\tau_\alpha:\underline{B}\rightarrow \uparrow \mathbf{I}$, $\alpha\in\{0,1\}$, such that $A\subset \tau_\alpha^{-1}(\alpha)$ and $V$'s complement
$CV\subset \tau_\alpha^{-1}(1-\alpha)$. Obviously $\uparrow A\subset \uparrow V$. If only one of the two functions $\tau_\alpha$ exists
then  we say that $\uparrow V$  is a weak d-halo for $\uparrow A$.
\end{defn}
\begin{rem}\label{rem.3.1.2} 1) In the undirected topology obviously only one map $\tau $ is needed, $\tau=\tau_1$ since for $\tau_0$ we can take $1-\tau_1$.
But even in the case of weak d-halos there can be difficulties since a haloing function must be directed. See the examples below.

2) It is obvious that if $\uparrow V$ is a weak d-halo of $\uparrow A$ in $\underline{B}$, then forgetting those d-structures, $V$ is a halo of $A$ in $B$.
\end{rem}
\begin{ex}\label{ex.3.1.3}
Consider the standard 2-disk $D^2$ centred in $O$  with the boundary $S^1$ and denote $V=D^2\backslash S^1$.

1. Consider for $D^2$ the d-structure $\circlearrowleft D^2$ given by the paths of the standard directed circles with center $O$ and radius $\leq 1$, and
let $\circlearrowleft V$ be the induced d-subspace.

Then we define $\tau_0:\circlearrowleft D^2\rightarrow \uparrow \mathbf{I}$ by $\tau_0(\rho,\theta)=\rho$. For this continuous map we have $\tau_0^{-1}(0)=
\{O\}=: A$ and $\tau_0^{-1}(1)=S^1=CV $. In addition $\tau_0$ is a d-map. Indeed, if $\alpha\in \circlearrowleft D^2$, then $\alpha(t)=(\rho_{\alpha},
\theta(t))$, where $\rho_\alpha$ depends only on $\alpha$. Then $(\tau_0\circ \alpha)(t)=\rho_\alpha$, i.e., it is a constant path and therefore
$\tau_0\circ\alpha\in d\uparrow \mathbf{I}$.

Similarly we define $\tau_1:\circlearrowleft D^2\rightarrow \uparrow \mathbf{I} $ by $\tau_1(\rho,\theta)=1-\rho$. For this we have $\tau_1^{-1}(1)=A$, and
$\tau_1^{-1}(0)=CV$. And as above $\tau_1$ is a d-map since for a d-path $\alpha$, $\tau_1\circ \alpha$ is the constant path $1-\rho_\alpha$. So we verified
that $\circlearrowleft V$ is a d-halo for $A$ in $\circlearrowleft D^2$.

2. Now consider for $D^2$ the d-structure $\uparrow D^2$ given by the paths along radii increasing from the center to the boundary. In this case we can consider also the same map $\tau_0: \uparrow D^2\rightarrow \uparrow \mathbf{I}$. If $\beta\in \uparrow D^2$ , then $\beta(t)=(\rho(t),\theta_\beta)$
and $(\tau_0\circ \beta)(t)=\tau_0(\rho(t),\theta_\beta)=\rho(t)$. Therefore  $\tau_0\circ \beta\in \uparrow \mathbf{I}$, so that $\tau_1$ is in this
case also a d-map.

But a second d-haloing function $\tau_1:\uparrow D^2\rightarrow \uparrow \mathbf{I}$, $\tau_1(\rho,\theta)$,  does not exist. Indeed, it should satisfy the
following condition: a)if $0\leq \rho_1<\rho_2\leq 1$, then $\tau_1(\rho_1,\theta)\leq \tau_1(\rho_2,\theta)$, $(\forall) \theta$ , b) $\tau_1(0,\theta)=1,
(\forall) \theta$, c)$\theta_1(1,0)=0,(\forall)\theta$, which obviously is impossible. Therefore in this case $ \uparrow V$ is no more a d-halo for $A$
in $\uparrow D^2$. But, as in the first case, $V$ is a halo of $A$ in the undirected $D^2$.

\end{ex}
\begin{ex}\label{ex.3.1.4}Consider the unit circle $S^1=\{z\in \mathbb{C}|z(\theta)=cos \theta+i sin \theta, \theta\in [0,2\pi]\}$.
 Consider for $S^1$ the directed strucure $\uparrow S^1$ of d-subspace of $\uparrow \mathbf{R}\times \uparrow \mathbf{R}$. This means that the non trivial paths are included in the arcs a) $\theta\in [\pi/2,\pi]$ and b) $\theta\in [3\pi/2,2\pi]$.

1)Let $\uparrow A,\uparrow V$ be the following d-subspaces of $\uparrow S^1$. $A=\{i\}$, $V=S^1\backslash \{z(\theta)| \theta\in [\pi,3\pi/2]\}$.

We can prove that $\uparrow V$ is a weak d-halo of $\uparrow A$ in $\uparrow S^1$. For this we define $\tau_1:\uparrow S^1\rightarrow
\uparrow \mathbf{I}$  by

\[\tau_1(z(\theta))=\left\{ \begin{array}{ll}
\\\frac{2\theta}{\pi}, & \mbox{if $0\leq \theta\leq \pi/2 $},\\ sin \theta, & \mbox{if $\pi/2\leq \theta\leq \pi $},\\0,&\mbox{if $\pi\leq
\theta\leq 3\pi/2$},\\\frac{1+sin \theta}{2},&\mbox{if $3\pi/2\leq \theta\leq 2\pi$}.\end{array} \right.
\]
For this we have $\tau_1^{-1}(1)=A$, $\tau_1^{-1}(0)\supset CV$. In addition $\tau_1$ is a d-map. Indeed if $\alpha\in$ d$(\uparrow S^1)$ is a non constant  path in the case a), $\alpha(t)=z(\theta(t))$, and if $t_1<t_2$, which implies $\pi/2\leq\theta(t_1)\geq \theta(t_2)\leq \pi$, it follows $(\tau_0\circ \alpha)(t_1)=sin (\theta(t_1))\leq sin (\theta(t_2)=(\tau_1\circ \alpha)(t_2)$, which prove that $\tau_1\circ \alpha\in d(\uparrow \mathbf{I})$. If $\alpha$ is in
the case b)$3\pi/2\leq \theta(t_1)<\theta(t_2)\leq 2\pi$, then $(\tau_1\circ \alpha)(t_1)=\frac{1+sin (\alpha(t_1))}{2} \leq \frac{1+sin (\alpha
(t_2))}{2} =(\tau_1\circ \alpha)(t_2)$,thus that again $\tau_1\circ \alpha\in d(\uparrow \mathbf{I})$. A haloing d-function  $\tau_0$ does not exist since
the relations $\frac{\pi}{2}\leq \theta_1<\theta_2 \Rightarrow \tau_0(\theta_1)\geq \tau_0(\theta_2)$ and $\tau_0(\pi/2)=0,\tau_0(\pi)=1$ are incompatible.

2) Consider instead of $A$ the d-subspace $\uparrow A'=\{z(\theta)\in \uparrow S^1|\pi/2\leq \theta\leq \pi\}$. Then we can prove that $\uparrow V$ is a d-halo of $ \uparrow A'$ in $\uparrow S^1$. Define $\tau'_0:\uparrow S^1\rightarrow \uparrow \mathbf{I}$ by
\[\tau'_0(z(\theta))=\left\{ \begin{array}{ll}
1+\frac{2\theta}{\pi}, & \mbox{if $0\leq \theta\leq \pi/2 $},\\
0, & \mbox{if $\pi/2\leq \theta\leq \pi $},\\\frac{2}{\pi}(\theta-\pi),&\mbox{if $\pi\leq
\theta\leq 3\pi/2$},\\1,&\mbox{if $3\pi/2\leq \theta\leq 2\pi$}.\end{array} \right.
\]
The only fact to note is that $\tau'_0$ is a d-map. But this follows from the fact that for each $\alpha\in d(\uparrow S^1)$ the composition
$\tau'_0\circ \alpha \in d(\uparrow \mathbf{I})$ as a constant path. And $\tau'_1:\uparrow S^1\rightarrow \uparrow \mathbf{I}$ can be defined
by $\tau'_1=1-\tau'_0$.
\end{ex}
\begin{defn}\label{def.3.1.5} A d-map $p:\underline{E}\rightarrow \underline{B}$ has the \emph{directed section extension property} (\emph{d-SEP}) if the following holds.
For every d-subspace $\uparrow A\subset \underline{B}$ and every d-section $s$ over $\uparrow A$ which admits an extension to a d-halo $\uparrow V$ around
$\uparrow A$, there exists a d-extension $S$ over $\underline{B}$, i.e., a d-section $S:\underline{B}\rightarrow \underline{E}$ with $S|A=s$.

\end{defn}
\begin{prop}\label{prop.3.1.6}If a d-map $p:\underline{E}\rightarrow \underline{B}$ is directed dominated by $p':\underline{E}'\rightarrow \underline{B}$, and
$p'$ has the d-SEP, then $p$ has the  d-SEP. In particular, every d-shrinkable map has the d-SEP (since it is d-dominated by $id_{\underline{B}})$.
\end{prop}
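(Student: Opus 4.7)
Let $s\colon\uparrow A\to\underline E$ be a d-section of $p$ that extends over a d-halo $\uparrow V$ of $\uparrow A$ to $\tilde s\colon\uparrow V\to\underline E$, with d-halo functions $\tau_0,\tau_1\colon\underline B\to\uparrow\mathbf I$. Let the d-domination be realised by $f\colon\underline E\to\underline E'$, $g\colon\underline E'\to\underline E$ (with $p'f=p$, $pg=p'$) together with a vertical d-homotopy (in general a finite zigzag) $g\circ f\stackunder{(p)}{\simeq_d}id_{\underline E}$. The goal is to produce a d-section $S\colon\underline B\to\underline E$ with $S|_{\uparrow A}=s$.

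\textbf{Transfer by $f$, pullback by $g$.} Since $p'\circ f\circ s = p\circ s = id_{\uparrow A}$, the composite $f\circ s\colon\uparrow A\to\underline E'$ is a d-section of $p'$, and $f\circ\tilde s\colon\uparrow V\to\underline E'$ is a d-halo extension of it. The d-SEP of $p'$ then furnishes a global d-section $\sigma\colon\underline B\to\underline E'$ with $\sigma|_{\uparrow A}=f\circ s$. Its pullback $T:=g\circ\sigma\colon\underline B\to\underline E$ satisfies $p\circ T = p'\circ\sigma = id_{\underline B}$, so $T$ is a d-section of $p$; however $T|_{\uparrow A}=g\circ f\circ s$, which is only \emph{vertically d-homotopic} to $s$, not equal to it.

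\textbf{Correction near $\uparrow A$, and the particular case.} Use the halo functions and the vertical homotopy to deform $T$ into the desired $S$. Schematically, set $S(b)=h(\tilde s(b),\tau_1(b))$ on $\uparrow V$ and $S(b)=T(b)$ on $CV$, where $h$ denotes the vertical homotopy (read along the zigzag when necessary) and $\tau_1|_A=1$, $\tau_1|_{CV}=0$. Then $\tau_1|_A=1$ forces $S|_{\uparrow A}=h(\tilde s(\cdot),1)=s$, while verticality of $h$ together with $p\circ T=id_{\underline B}$ gives $p\circ S = id_{\underline B}$. The technical heart — and the main expected obstacle — is to make the two pieces of $S$ match continuously at the $\uparrow V/CV$ interface: the $\uparrow V$-piece runs out to $h(\tilde s(b),0)=g\circ f\circ\tilde s(b)$, whereas $T(b)=g\circ\sigma(b)$, and d-SEP of $p'$ does not force $\sigma|_{\uparrow V}=f\circ\tilde s$. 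The remedy is to shrink $\uparrow V$ to $\uparrow V_1 := \tau_0^{-1}[0,\tfrac12)$, with rescaled d-halo functions $\min(1,2\tau_0)$ and $\max(0,2\tau_1-1)$ — these remain d-maps into $\uparrow\mathbf I$ since post-composition with a weakly monotone function preserves $d(\uparrow\mathbf I)$ — and to interleave the zigzag of vertical homotopies (or invoke d-SEP of $p'$ a second time on the smaller halo) to bridge $g\circ f\circ\tilde s$ and $g\circ\sigma$ across $\uparrow V\setminus\uparrow V_1$. Once this patching is done, $S$ is a d-section of $p$ with $S|_{\uparrow A}=s$, establishing d-SEP for $p$. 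The particular case is then immediate: by Definition \ref{def.2.5.3}(b), a d-shrinkable map is d-dominated by $id_{\underline B}$, which trivially has d-SEP (take $S=id_{\underline B}$), so $p$ inherits the d-SEP.
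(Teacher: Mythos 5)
There is a genuine gap, and you have located it yourself: your two-piece definition of $S$ does not agree on the overlap. On $\uparrow V$ your formula runs out to $h(\tilde s(b),0)=g\circ f\circ\tilde s(b)$, while on $CV$ you use $T(b)=g\circ\sigma(b)$, and nothing in the hypotheses relates $\sigma$ to $f\circ\tilde s$ away from $\uparrow A$. The two proposed remedies do not close this. ``Interleaving the zigzag'' cannot work because the vertical homotopy you possess connects $g\circ f$ to $id_{\underline E}$; it gives no d-homotopy between the two distinct sections $g\circ f\circ\tilde s$ and $g\circ\sigma$ over $\uparrow V\setminus\uparrow V_1$, so there is nothing to interleave. ``Invoking d-SEP of $p'$ a second time'' is not carried out and, as stated (extending again only from $\uparrow A$), would reproduce the same mismatch one ring further out. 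As written, $S$ is not well defined, so the main assertion is not proved; only the final sentence (the particular case, granted the main assertion) is fine.

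The missing idea is to apply the d-SEP of $p'$ \emph{once}, but to a section prescribed on a larger set than $\uparrow A$, chosen so that the interface of the eventual gluing is contained in it. Concretely, the paper sets $\uparrow A'=\uparrow\bigl(V\cap\tau_1^{-1}[\tfrac12,1]\bigr)$ and $s'=f\circ s_{\uparrow V}|\uparrow A'$; then $\uparrow V$ is still a d-halo around $\uparrow A'$ (rescale the haloing functions), $f\circ s_{\uparrow V}$ extends $s'$ over it, and the d-SEP of $p'$ yields a global d-section $S'$ with $S'|\uparrow A'=f\circ s_{\uparrow V}|\uparrow A'$. Now define $S(b)=gS'(b)$ for $\tau_1(b)\le\tfrac12$ and $S(b)=\varphi(s_{\uparrow V}(b),2\tau_1(b)-1)$ for $\tau_1(b)\ge\tfrac12$, where $\varphi:gf\stackunder{(p)}{\preceq_d}id_{\underline E}$. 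At $\tau_1(b)=\tfrac12$ one has $b\in\uparrow A'$, hence $gS'(b)=gf\,s_{\uparrow V}(b)=\varphi(s_{\uparrow V}(b),0)$, and the two formulas agree; on $\uparrow A$ one has $\tau_1=1$, so $S=\varphi(s_{\uparrow V}(\cdot),1)=s$. This is exactly the consistency your construction lacks. (The zigzag case then still requires separate treatment, with the homotopies in the two directions handled via $\tau_0$ and $\tau_1$ respectively and an auxiliary section through the intermediate map $h$, as in the paper; a single formula $h(\tilde s(b),\tau_1(b))$ cannot absorb a zigzag.)
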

\begin{proof}By hypothesis we have d-maps $f:\underline{E}\rightarrow \underline{E}'$, $g:\underline{E}'\rightarrow \underline{E}$ over $\underline{B}$,
$p'f=p,pg=p'$, such that $gf\stackunder{(p)}{\simeq_d}id_{\underline{E}}$. Suppose at first that there exists a d-homotopy
$\varphi:gf\stackunder{(p)}{\preceq_d} id_{\underline{E}}$, i.e., $\varphi(e,0)=gf(e),\varphi(e,1)=e,p\varphi(e,t)=p(e),(\forall) e\in \underline{E},
(\forall) t\in [0,1]$.

Let $\uparrow A\subset \underline{B}$ and $s:\uparrow A\rightarrow \underline{E}$ a d-section, $ps=id_{\uparrow A}$, which admits a d-extension
$s_{\uparrow V}:\uparrow V\rightarrow \underline{B}$, $ps_{\uparrow V}=id_{\uparrow V}, s_{\uparrow V}|\uparrow A=s$, to a d-halo $\uparrow V$, with
$\tau_\alpha:\underline{B}\rightarrow \uparrow \mathbf{I}$ the haloing functions. Denote $\uparrow A'=\uparrow (V\cap \tau_1^{-1}[\frac{1}{2},1])$
and $s'=f\circ s_{\uparrow V}| \uparrow A':\uparrow  A' \rightarrow \underline{E}'$. This is a section over $\uparrow A'$, $\uparrow V$ is a halo around $A'$ and $s'_{\uparrow V}:=f \circ s_{\uparrow V}:\uparrow V \rightarrow \underline{E}'$ is an extension of $s'$. By hypothesis there exists a d-section $S':\underline{B}\rightarrow\underline{E}'$ which extends $s'$.

Now we can define $S:\underline{B}\rightarrow \underline{E}$ by

\[S(b)=\left\{ \begin{array}{ll}
gS'(b), & \mbox{if $\tau_1(b)\leq \frac{1}{2}$},\\
\varphi(s_{\uparrow V}(b),2\tau_1(b)-1), & \mbox{if $\tau_1(b)\geq \frac{1}{2}$}.\end{array} \right.
\]
This is a well-defined d-map and $pS(b)=pgS'(b)=p'S'(b)=b$, if $\tau_1(b)\leq \frac{1}{2}$ , and $pS(b)=p\varphi(s_{\uparrow V}(b),2\tau_1(b)-1)=ps_{\uparrow V}(b)=b$, if
$\tau_1(b)\geq \frac{1}{2}$, i.e., $S$ is a section. Then if $b\in A$, then $\tau_1(b)=1$ and $S(b)=\varphi(s_{\uparrow V}(b),1)
=s_{\uparrow V}(b)=s(b)$, i.e., $S|\uparrow A=s$.

Now if we have the case $id_{\underline{E}}\stackunder{(p)}{\preceq_d}gf$, by a d-homotopy $\psi$ , $\psi(e,0)=e,\psi(e,1)=gf(e),p\psi(e,t)=p(e)$,
then we repeat the steps above by considering the d-section $\overline{S'}:\underline{B}\rightarrow \underline{E}' $ for which $\overline{S'}|
\uparrow (V\cap\tau_0^{-1}[0,\frac{1}{2}])=(f\circ s_{\uparrow V})|\uparrow (V\cap\tau_0^{-1}[0,\frac{1}{2}])$ and define $\overline{S}:\underline{B}\rightarrow \underline{E}$ by
\[\overline{S}(b)=\left\{ \begin{array}{ll}
\psi(s_{\uparrow V}(b),2\tau_0(b)), & \mbox{if $\tau_0(b)\leq \frac{1}{2} $},\\
g\overline{S'}(b), & \mbox{if $\tau_0(b)\geq \frac{1}{2} $}.\end{array} \right.
\]
Another case is when $\kappa:gf\stackunder{(p)}{\preceq_d}h$ and $\theta:id_{\underline{E}}\stackunder{(p)}{\preceq_d}h.$ Consider the composition
$\widetilde{s}=h\circ s:\uparrow A\rightarrow \underline{E}$. This is a section, $p\circ (h\circ s)=(p\circ h)\circ s=p\circ s=id_{\uparrow A}$. By the
notations from the first case $\widetilde{s}'=fhs_{\uparrow V}|\uparrow A': \uparrow A'\rightarrow \underline{E'} $ is also a section for $p'$
having as an extension the d-map  $\widetilde{s}'_{\uparrow V}=fhs_{\uparrow V}:\uparrow V \rightarrow E'$ . Denote by
$\widetilde{S}':\underline{B}\rightarrow \underline{E}'$ a d-section which extends $\widetilde{s}'$. Define $\widetilde{S}:\underline{B}\rightarrow
\underline{E}'$ by
\[\widetilde{S}(b)=\left\{ \begin{array}{ll}
g\widetilde{S}'(b), & \mbox{if $\tau_1(b)\leq \frac{1}{2}$},\\
\varphi(hs_{\uparrow V}(b),2\tau_1(b)-1), & \mbox{if $\tau_1(b)\geq \frac{1}{2}$}.\end{array} \right.
\]
This is well defined, it is a section and $\widetilde{S}|\uparrow A'=h\circ s_{\uparrow V}$.
Now define $S:\underline{B } \rightarrow \underline{E}$ by

\[S(b)=\left\{ \begin{array}{ll}
\theta(s_{\uparrow V}(b),2\tau_0(b)), & \mbox{if $\tau_0(b)\leq \frac{1}{2} $},\\
\widetilde{S}(b), & \mbox{if $\tau_0(b)\geq\frac{1}{2} $}.\end{array} \right.
\]
This is well defined and it is a section for $p$. Then if $b\in \uparrow A$, then $\tau_0(b)=0$, such that $S(b)=\theta(s_{\uparrow V}(b),0)=s_{\uparrow V}(b)=s(b)$.

From these three situations it can be deduced how to proceed in other cases.
\end{proof}
\begin{defn}\label{def.3.1.7}  A covering $\{V_\lambda\}_{\lambda\in \Lambda}$ of a directed space $\underline{B}$ is called \emph{d-numerable}
if the following condition is satisfied:  there exists a directed locally finite partition of unity $\{\pi_\gamma:
\underline{B} \rightarrow \uparrow \mathbf{I}\}_{\gamma\in \Gamma}$ such that every set $\pi_\gamma^{-1}(0,1]$ is contained in some $V_\lambda$.
\end{defn}

The proof of Th.2.7 (Section Extension Theorem) from \cite{Dold1} can be smoothly adapted, having as model the proof of Proposition \ref{prop.3.1.6},
to the proof of the following proposition.
\begin{prop}\label{prop.3.1.8}Let $p:E\rightarrow B$ be a d-map. If there exists a d-numerable covering $\{V_\lambda\}$ of $\underline{B}$ such that $p$ has
the d-SEP over each $\uparrow V_\lambda$, then $p$ has the d-SEP.
\end{prop}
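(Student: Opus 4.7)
The plan is to adapt the proof of Dold's Section Extension Theorem (Satz 2.7 of \cite{Dold1}) to the directed setting, using the detailed glueing construction of Proposition \ref{prop.3.1.6} as the template for combining sections across regions governed by d-haloing functions. The essential new point compared with the undirected case is that every auxiliary real-valued function produced during the construction must be verified to be a d-map $\underline{B}\to\uparrow\mathbf{I}$, not merely continuous; this is precisely the role played by d-numerability.

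Given the starting data (d-subspace $\uparrow A\subset\underline{B}$, d-section $s:\uparrow A\to\underline{E}$, d-halo $\uparrow V$ around $\uparrow A$ with haloing d-functions $\tau_0,\tau_1$, and d-extension $s_{\uparrow V}:\uparrow V\to\underline{E}$), first pick the directed locally finite partition of unity $\{\pi_\gamma\}_{\gamma\in\Gamma}$ witnessing d-numerability, with $\pi_\gamma^{-1}(0,1]\subset V_{\lambda(\gamma)}$, and well-order $\Gamma$. The crucial technical observation I would record at this point is that finite sums, maxima, and minima of d-maps $\underline{B}\to\uparrow\mathbf{I}$ remain d-maps, since these operations preserve weak monotonicity of maps $\mathbf{I}\to\mathbf{I}$; combined with the local finiteness of $\{\pi_\gamma\}$, this shows that the partial-sum and cut-off functions appearing in Dold's original argument carry over to the directed setting without modification.

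Next, by transfinite recursion on $\gamma\in\Gamma$, I would construct an ascending family of d-subsets $\uparrow A_\gamma\supset\uparrow V$ and d-sections $s_\gamma:\uparrow A_\gamma\to\underline{E}$, all extending $s_{\uparrow V}$. At a successor step $\gamma\mapsto\gamma+1$, the section $s_\gamma$ restricted to an appropriate d-subspace of $\uparrow V_{\lambda(\gamma)}$ admits a d-halo inside $\uparrow V_{\lambda(\gamma)}$ whose haloing functions are built from $\pi_\gamma$ and the accumulated $\tau$'s via the algebraic operations above; the d-SEP hypothesis over $\uparrow V_{\lambda(\gamma)}$ then furnishes an extension to all of $\uparrow V_{\lambda(\gamma)}$, which is glued to $s_\gamma$ by piecewise case-split formulas of exactly the form appearing at the end of the proof of Proposition \ref{prop.3.1.6}. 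At limit ordinals one takes unions. Since $\sum_\gamma\pi_\gamma=1$ pointwise and the sum is locally finite, each $b\in\underline{B}$ lies in $\uparrow A_\gamma$ from some stage onward, so the assembled section $S:\underline{B}\to\underline{E}$ extends $s$ and $p\circ S=\mathrm{id}_{\underline{B}}$.

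The main obstacle, already exemplified in Examples \ref{ex.3.1.3}(2) and \ref{ex.3.1.4}(1), is that a continuous haloing function need not be directed; hence at every successor step one must certify both that the enlarged halo around $\uparrow A_\gamma$ in $\uparrow A_{\gamma+1}$ comes equipped with \emph{directed} haloing functions, and that the glued section $s_{\gamma+1}$ is a \emph{d}-map rather than merely a continuous section. Both checks reduce to the technical observation of the second paragraph: the operations used are max, min, and finite sum of d-maps into $\uparrow\mathbf{I}$, and the piecewise glueing formulas imported from Proposition \ref{prop.3.1.6} were already written with directedness of the $\tau_\alpha$ in mind. Once these checks are made routine, Dold's scheme runs through verbatim.
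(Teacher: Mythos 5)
Your proposal follows exactly the route the paper intends: the paper offers no written proof of Proposition \ref{prop.3.1.8} beyond the remark that Dold's Section Extension Theorem (Th.~2.7 of \cite{Dold1}) ``can be smoothly adapted, having as model the proof of Proposition \ref{prop.3.1.6}.'' Your sketch is precisely that adaptation, and in fact records more of the necessary directedness checks (closure of d-maps into $\uparrow\mathbf{I}$ under finite sums, max and min; directedness of the glued sections) than the paper itself does.
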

The following proposition is analogous to Prop. 3.1 in \cite{Dold1} from the undirected case. But to make a proof accessible to the author, a
restriction has to be imposed given by the following definition.
\begin{defn}\label{def.3.1.9}A d-map $p:\underline{E}\rightarrow \underline{B}$ is called \emph{saturated} if $\alpha\in d\underline{E}$ if and only if
$p\circ \alpha\in d\underline{B}$.
\end{defn}

\begin{prop}\label{prop.3.1.10} Consider the following properties of a d-map $p:\underline{E }\rightarrow \underline{B}$ .

(a) For a d-map  $\alpha:\underline{X}\rightarrow \underline{B}$, the induced d-map $p_\alpha:\uparrow E_\alpha:=\{(e,x)\in \underline{E}\times
\underline{X}|p(e)=\alpha(x)\}\rightarrow \underline{X}, p_\alpha(e,x)=x$, has the d-SEP (in particular $p=p_{id}$).

(b) Given $\overline{F}:\underline{X}\rightarrow \underline{B}$, a d-halo $\uparrow V$ around $\uparrow A\subset \underline{X}$, and $f:\uparrow V
\rightarrow \underline{E} $ with $pf=\overline{F}|\uparrow V$(a partial lift of $\overline{F}$) there exists $F:\underline{X}\rightarrow \underline{E}$
with $F|A=f|A$, and $ pF=\overline{F}$ ( a lift of $\overline{F}$ ).

(c) Given $\overline{F}:\underline{X}\rightarrow \underline{B}$, $\uparrow A\subset \underline{X}$, $ f:\uparrow A\rightarrow \underline{E}$ with $pf=
\overline{F}|\uparrow A $, there exists a lift $F:\underline{X}\rightarrow \underline{E}$ of $\overline{F}$ with $F|\uparrow A\stackunder{(p)}{\simeq_d}f$.

(d) $p:\underline{E}\rightarrow \underline{B}$ is shrinkable.

Then we have:

I. (a)$\Leftrightarrow$ (b);

II. (c)$\Rightarrow$ (b)

III. (d) $\Rightarrow$ (c);

IV. If $p$ is a saturated d-map then (b)$\Rightarrow $(d).
\end{prop}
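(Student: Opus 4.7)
My plan is to treat (I)--(IV) in that order, using the pullback--section correspondence as the common engine and invoking the saturation hypothesis only in the last step. For (I), given a d-map $\alpha:\underline{X}\to\underline{B}$ and a d-subspace $\uparrow A\subset\underline{X}$, a d-section $\sigma:\uparrow A\to\uparrow E_\alpha$ of $p_\alpha$ corresponds bijectively to a d-lift $f:\uparrow A\to\underline{E}$ of $\alpha|\uparrow A$ via $\sigma(x)=(f(x),x)$, and this bijection transforms extensions of $\sigma$ over a d-halo $\uparrow V$ of $\uparrow A$ into extensions of $f$. Thus (a)$\Rightarrow$(b) by applying the d-SEP of $p_{\overline{F}}$ to $\sigma_V(x):=(f(x),x)$ and projecting the resulting full section onto its $\underline{E}$-component, and (b)$\Rightarrow$(a) by unpackaging any given $\sigma$ and $\sigma_V$ into a partial lift $f$ of $\alpha$, feeding it to (b), and repackaging $F$ as a section.

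For (II), starting from the data of (b) I would apply (c) with the role of ``$A$'' played by the full halo $\uparrow V$ and the role of ``$f$'' by the given partial lift. This yields $F_0:\underline{X}\to\underline{E}$ with $pF_0=\overline{F}$ and $F_0|\uparrow V\stackunder{(p)}{\simeq_d}f$. To replace the vertical homotopy by equality on $\uparrow A$, I would interpolate along the haloing functions exactly as in the proof of Proposition~\ref{prop.3.1.6}: for a basic step $\varphi:F_0|\uparrow V\preceq f$ of the zigzag, set
\[
F(x)=\begin{cases}\varphi(x,\tau_1(x)),& x\in\uparrow V,\\ F_0(x),& x\notin\uparrow V,\end{cases}
\]
which glues continuously because $\tau_1=0$ on $CV$ and $\varphi(\,\cdot\,,0)=F_0|\uparrow V$, equals $f$ on $\uparrow A$ since $\tau_1=1$ there, and is a d-map because $\tau_1$ is; the general zigzag is reduced to this by iterating, switching between $\tau_0$ and $\tau_1$ according to the direction of each step. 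For (III), let $s:\underline{B}\to\underline{E}$ and $\chi:s\circ p\stackunder{(p)}{\simeq_d}id_{\underline{E}}$ come from d-shrinkability; I would simply take $F:=s\circ\overline{F}$, so that $pF=\overline{F}$ and $F|\uparrow A=s\circ p\circ f\stackunder{(p)}{\simeq_d}f$ via postcomposition of $\chi$ with $f$.

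For (IV), the section $s:\underline{B}\to\underline{E}$ is produced at once by applying (b) with $\overline{F}=id_{\underline{B}}$ and $\uparrow A=\uparrow V=\emptyset$. The remaining task is the vertical d-homotopy $s\circ p\stackunder{(p)}{\simeq_d}id_{\underline{E}}$, and this is where saturation enters. The key observation is that if $p$ is saturated, then any continuous map $\Phi:|\underline{E}|\times[0,1]\to|\underline{E}|$ with $p\circ\Phi(e,t)=p(e)$ is automatically a d-map $\underline{E}\times\uparrow\mathbf{I}\to\underline{E}$: for a d-path $(\alpha,\beta)$ in the directed cylinder, $p\circ\Phi\circ(\alpha,\beta)=p\circ\alpha\in d\underline{B}$, whence $\Phi\circ(\alpha,\beta)\in d\underline{E}$ by saturation; an analogous remark turns continuous partial lifts of d-maps into d-lifts. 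So it is enough to carry out the classical Dold construction at the underlying level: take $\overline{F}(e,t)=p(e)$ on $|\underline{E}|\times[0,1]$, a usual halo $V$ of $|\underline{E}|\times\{0,1\}$, and the partial lift equal to $s\circ p$ near $t=0$ and $id_{\underline{E}}$ near $t=1$; (b), read through saturation on underlying spaces, produces the vertical homotopy, which is then upgraded to a vertical d-homotopy by the remark above.

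The main obstacle is precisely this last construction. In the strictly directed setting the natural halo $|\underline{E}|\times([0,\tfrac13)\cup(\tfrac23,1])$ of $|\underline{E}|\times\{0,1\}$ is \emph{not} a d-halo in $\underline{E}\times\uparrow\mathbf{I}$: every d-haloing function $\underline{E}\times\uparrow\mathbf{I}\to\uparrow\mathbf{I}$ must be weakly increasing in $t$, so it cannot take the value $1$ at both $t=0$ and $t=1$ without being constant, and the dual condition rules out $\tau_0$. Saturation is exactly what bridges this gap, allowing (b) to be transferred to the underlying continuous problem, solved there, and lifted back to a d-map; this is the crux of (b)$\Rightarrow$(d) and explains why the extra hypothesis is needed.
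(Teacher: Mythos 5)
Your proposal follows essentially the same route as the paper in all four parts: the pullback--section dictionary for I, interpolation along the haloing functions for II, $F:=s\circ\overline{F}$ together with the vertical homotopy obtained by precomposing $\chi$ with $f$ for III, and, for IV, transferring the Dold construction to the underlying undirected spaces and then upgrading the resulting homotopy to a d-map via saturation (your remark that any fibrewise continuous map over a saturated $p$ is automatically directed is exactly the paper's argument, and your explanation of why $\underline{E}\times[0,\tfrac12)\cup\underline{E}\times(\tfrac12,1]$ fails to be a d-halo in $\underline{E}\times\uparrow\mathbf{I}$ makes explicit a point the paper leaves implicit). The one genuine flaw is the gluing formula in II: setting $F(x)=\varphi(x,\tau_1(x))$ for $x\in\uparrow V$ and $F(x)=F_0(x)$ for $x\notin\uparrow V$ need not give a continuous map, because $V$ and its complement are in general neither open nor closed and $\varphi$ has no continuous extension to $\overline{V}\times\mathbf{I}$; the hypothesis $\tau_1|_{CV}=0$ controls $\tau_1(x)$ near the frontier of $V$, but not the values $\varphi(x,s)$ for small $s>0$, which can oscillate as $x$ approaches a frontier point outside $V$. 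The paper avoids this by cutting at $\tau_1=\tfrac12$: take $F(x)=F_0(x)$ on the closed set $\tau_1^{-1}([0,\tfrac12])$ and $F(x)=\varphi(x,2\tau_1(x)-1)$ on the closed set $\tau_1^{-1}([\tfrac12,1])$, which is contained in $\tau_1^{-1}((0,1])\subset V$ so that $\varphi$ is defined there; the two definitions agree on the overlap $\tau_1^{-1}(\tfrac12)$ and glue by the pasting lemma for closed covers. With that correction (and the analogous one, using $\tau_0$ and the cut at $\tau_0=\tfrac12$, for the oppositely oriented steps of the zigzag) your argument for II, and hence the whole proposal, goes through.
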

\begin{proof}

I. (a)$\Rightarrow$ (b). Given $\overline{F}$, $\uparrow A\subseteq\uparrow V $, $f$ as in (b) and consider the induced d-map $p_{\overline{F}}:
\underline{E}_{\overline{F}}=\{(e,x)\in \underline{E}\times \underline{X}|p(e)=\overline{F}(x)\}\rightarrow \underline{X}$, $p_{\overline{F}}(e,x)=x$.
We can consider the d-map $s:V\rightarrow \underline{E}_{\overline{F}}$ defined by $s(v)=(f(v),v)$, since $pf(v)=\overline{F}(v)$. This is a section for
$p_{\overline{F}}$ over $\uparrow V$, $p_{\overline{F}}s(v)=v$. Then by (a) there exists a section $S:\underline{X}\rightarrow \underline{E}_{\overline{F}}$, $p_{\overline{F}}\circ S=id_{\underline{X}}$. If $S(x)=(F(x),\mathcal{X}(x))$, then $pF(x)=\overline{F}(\mathcal{X}(x))$, $\mathcal{X}(x)=
id_{\underline{X}}(x)=x$, such that $pF(x)=\overline{F}(x)$.

(b)$\Rightarrow$ (a) Suppose given an induced d-map $p_\alpha:\underline{E}_\alpha \rightarrow \underline{X}$ and let $\uparrow A\subset \underline{X}$
$s:\uparrow A \rightarrow \underline{E}_\alpha $ a section of $p_\alpha$ over $\uparrow A$, $\uparrow V$ a weak d-halo around of $\uparrow A$ and
$s_{\uparrow V}: \uparrow V \rightarrow \underline{E}_\alpha$ and extension of $s$, $p_\alpha s_{\uparrow V}=id_{\uparrow V}, s_{ \uparrow V}|
\uparrow A=s$. Now we place in terms of (b) with $\overline{F}=\alpha:\underline{X}\rightarrow \underline{B}$, and if $s_{\uparrow V}(v)=(\mathcal{E}(v),
v)$, $f:=\mathcal{E}:\uparrow V\rightarrow \underline{E}$. For these we have $pf(v)=p\mathcal{E}(v)=\alpha(v)=\overline{F}(v)$. Then there exists
$F:\underline{X}\rightarrow \underline{E}$, with $F|\uparrow A=f|\uparrow A=\mathcal{E}|\uparrow A$ and $pF=\alpha$. Now we define $S:\underline{X}
\rightarrow \underline{E}_\alpha$ by $S(x)=(F(x),x)$ and this is a section which extends $s$.

II. (c)$\Rightarrow$ (b): Given $F$, $\uparrow A\subset \uparrow V$ and  $f$, as in (b), and assuming (c), there exists a lift $F':\underline{X}\rightarrow
\underline{E}$ of $F$ and such that $F'|V\stackunder{(V)}{\simeq_d}f$. Suppose at first $D: F'|\uparrow V\stackunder{(V)}{\preceq_d}f$. If $\tau_1:
\underline{X}\rightarrow \uparrow \mathbf{I}$ with $\tau_1|A=1,\tau_1|CV=0$, define $F:\underline{X}\rightarrow \underline{E}$
\[F(x)=\left\{ \begin{array}{ll}
F'(x), & \mbox{if $\tau_1(x)\leq \frac{1}{2} $},\\
D(x,2\tau_1(x)-1), & \mbox{if $\tau_1(x)\geq\frac{1}{2} $}.\end{array} \right.
\]
This is well defined. If $x\in \uparrow A, F'(x)=f(x),D(x,1)=f(x)$, i.e., $F| \uparrow A=f$. Then if $\tau_1(x)\leq \frac{1}{2}$,
$pF(x)=pF'(x)=\overline{F}(x)$ and if $\tau_1(x)\geq \frac{1}{2}$, $pF(x)=pD(x,2\tau_1(x)-1)=\overline{F}(x)$. If we have
$\widetilde{D}:f \stackunder{(V)}{\preceq_d}F'| \uparrow V$,consider $\tau_0:\underline{X}\rightarrow \uparrow \mathbf{I}$ with $\tau_0|\uparrow A=0,
\tau_0|CV=1$ and define $\widetilde{F}:\underline{X}\rightarrow \underline{E}$ by

\[F(x)=\left\{ \begin{array}{ll}
\widetilde{D}(x,2\tau_0(x)), & \mbox{if $\tau_0(x)\leq \frac{1}{2} $},\\
F'(x), & \mbox{if $\tau_0(x)\geq \frac{1}{2} $}.\end{array} \right.
\]
If $\kappa:F'|\uparrow V \stackunder{(V)}{\preceq_d}h$ and $\theta: f\stackunder{(V)}{\preceq_d}h$, as above we obtain
$\widehat{F}:\underline{X}\rightarrow\underline{E}$ a lift of $\overline{F}$ and $\widehat{F}|\uparrow A=h|\uparrow A$ and then define

\[F(x)=\left\{ \begin{array}{ll}
\theta(x,2\tau_0(x)), & \mbox{if $\tau_0(x)\leq \frac{1}{2} $},\\
\widehat{F}(x), & \mbox{if $\tau_0(x)\geq \frac{1}{2} $}.\end{array} \right.
\]
From these three situations it can be deduced how to proceed in other cases.

III.For the implication (d)$\Rightarrow$ (c) the proof is the same as in the undirected case since no inverse paths (homotopies)(\cite{Dold1},
pp.231-232) are used.

IV. Apply (b) for $\underline{X}=\underline{B}$ $A=V=\emptyset$, and $\overline{F}=id_{\underline{B}}$ Then there exists
$S:\underline{B}\rightarrow \underline{E}$ with $p\circ S=id_{\underline{B}}$. We can prove that $S$ satisfies also the relation $S\circ p
\stackunder{(p)}{\simeq_d}id_{\underline{E}}$. For this purpose we consider $\underline{X}=\underline{E}\times \uparrow \mathbf{I}$,
$\overline{F}:\underline{X } \rightarrow \underline{B}$ ,$\overline{F}(e,t)=p(e)$, $\uparrow A=\underline{E}\times \{0\}\cup \underline{E}\times \{1\}$,
$\uparrow V=\underline{E}\times \uparrow[0,\frac{1}{2})\cup \underline{E}\times \uparrow(\frac{1}{2},1]$ , and $f:\uparrow V\rightarrow \underline{E}$
given by
\[f(e,t)=\left\{ \begin{array}{ll}
x, & \mbox{if $t<\frac{1}{2} $},\\
Sp(x), & \mbox{if $t>\frac{1}{2} $}.\end{array} \right.
\]
Now by Remark \ref{rem.3.1.2}, 2) and \cite{Dold1}, Prop.3.1, (b)$\Rightarrow$ (d), we deduce that there exists $F:E\times [0,1]\rightarrow E$ such that
$p\circ F=\overline{F }$ with $F|A=f|A$. This homotopy satisfies the conditions $F(e,0)=f(e,0)=e$, $F(e,1)=f(e,1)=Sp(e),pF(e,t)=\overline{F}(e,t)
=p(e),(\forall)e\in E, (\forall) t\in [0,1]$. Therefore $F:id_B \stackunder{(p)}{\simeq}Sp$. But since $F|A=f|A$ and $f$ is a d-map, it follows that $F$ is directed as in the first argument. Then from the relation $pF(e,t)=p(e)$ it follows that for a d-path $\alpha$  in $\underline{E}\times \uparrow
\mathbf{I}$, $\alpha(t)=(e(t),i(t))$, the composition $F\circ \alpha$ is a lift of d-pat $ e(t)$. This, by hypothesis implies that $F\circ \alpha
\in d(E\times \uparrow \mathbf{I})$ . Therefore $F$ is a d-homotopy, such that $F:id_{\underline{B}} \stackunder{(p)}{\simeq_d} S\circ p$.

\end{proof}
\begin{cor}\label{cor.3.1.11}
If $p:\underline{E }\rightarrow \underline{B}$ is a saturated d-map and d-shrinkable over each d-set $\uparrow V_\lambda$ of a d-numerable covering
$\{V_\lambda\}$ of $\underline{B}$, then $p$ is d-shrinkable.
\end{cor}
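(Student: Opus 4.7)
The plan is to reduce the statement to Proposition \ref{prop.3.1.8} by routing through the equivalences of Proposition \ref{prop.3.1.10}. Since $p$ is saturated, parts I and IV of Proposition \ref{prop.3.1.10} give (a)$\Leftrightarrow$(b)$\Leftrightarrow$(d); so to prove that $p$ is d-shrinkable it suffices to verify property (a): for every d-map $\alpha:\underline{X}\to\underline{B}$, the induced projection $p_\alpha:\underline{E}_\alpha\to\underline{X}$ has the d-SEP. Fix such an $\alpha$ and set $W_\lambda:=\alpha^{-1}(V_\lambda)\subset\underline{X}$, equipped with the d-subspace structure.

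First I would check that $\{W_\lambda\}$ is a d-numerable covering of $\underline{X}$: if $\{\pi_\gamma:\underline{B}\to\uparrow\mathbf{I}\}$ is the directed locally finite partition of unity witnessing d-numerability of $\{V_\lambda\}$, then $\{\pi_\gamma\circ\alpha\}$ is a directed locally finite partition of unity on $\underline{X}$, and each $(\pi_\gamma\circ\alpha)^{-1}(0,1]=\alpha^{-1}(\pi_\gamma^{-1}(0,1])$ is contained in some $W_\lambda$. Next I would observe that the restriction $p_\lambda:=p|_{p^{-1}(V_\lambda)}:\uparrow p^{-1}(V_\lambda)\to\uparrow V_\lambda$ is again saturated (a path in $p^{-1}(V_\lambda)$ with its subspace d-structure is directed iff its image in $\underline{E}$ is a d-path, which by saturation of $p$ is iff its composite with $p$ is a d-path in $V_\lambda$). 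By hypothesis $p_\lambda$ is d-shrinkable, and being saturated it therefore satisfies property (a) of Proposition \ref{prop.3.1.10}.

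Now the pullback of $p_\lambda$ along $\alpha|_{W_\lambda}:W_\lambda\to V_\lambda$ is canonically identified with the restriction $p_\alpha|_{p_\alpha^{-1}(W_\lambda)}:p_\alpha^{-1}(W_\lambda)\to\uparrow W_\lambda$. Applying property (a) of $p_\lambda$ to the test map $\alpha|_{W_\lambda}$ gives that this restriction has the d-SEP, i.e.\ $p_\alpha$ has the d-SEP over each $\uparrow W_\lambda$. Since $\{W_\lambda\}$ is a d-numerable covering of $\underline{X}$, Proposition \ref{prop.3.1.8} then yields the d-SEP of $p_\alpha$ over $\underline{X}$. This holds for every $\alpha$, so $p$ satisfies (a), hence (b), and finally (d) by Proposition \ref{prop.3.1.10}.IV, completing the proof.

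The step that will require the most care is the identification of the pullback of $p_\lambda$ along $\alpha|_{W_\lambda}$ with $p_\alpha$ restricted over $W_\lambda$ at the level of d-structures; both carry subspace d-structures inherited from different ambient products, and one must verify that the obvious bijection is an isomorphism in $d\mathbf{Top}$, which is a direct calculation using the definition of products and subspaces in $d\mathbf{Top}$. Everything else is formal once the equivalences of Proposition \ref{prop.3.1.10} and the gluing Proposition \ref{prop.3.1.8} are in hand.
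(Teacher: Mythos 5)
Your proposal is correct and follows essentially the same route as the paper: reduce via Proposition \ref{prop.3.1.10} (using saturation only for the final implication towards (d)) to showing that each $p_\alpha$ has the d-SEP, pull the covering back to $\{\alpha^{-1}(V_\lambda)\}$, establish the d-SEP of $p_\alpha$ over each $\uparrow\alpha^{-1}(V_\lambda)$ from the local d-shrinkability of $p$, and glue with Proposition \ref{prop.3.1.8}. The only cosmetic difference is that the paper handles the local step by writing down the pulled-back section $s'(x)=(s(\alpha(x)),x)$ and vertical homotopy explicitly, rather than invoking the pullback-of-a-pullback identification you flag; both amount to the same computation.
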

\begin{proof}Under the imposed conditions we have in Proposition \ref{prop.3.1.10} (a)$\Leftrightarrow$ (d), such that it is sufficient to show that for every d-map
$\alpha:\underline{X}\rightarrow \underline{B}$, the induced d-map $p_\alpha:\uparrow E_\alpha\rightarrow \underline{X}$ has the d-SEP. Now,
$\{\alpha^{-1}(V_\lambda)\}$ is a d-numerable covering of $\underline{X}$, and $p_\alpha$ is d-shrinkable over $\uparrow \alpha^{-1}(V_\lambda)$. Indeed,
because $p$ is d-shrinkable over $\uparrow V_\lambda$ there is a section $s:\uparrow V_\lambda \rightarrow\uparrow p^{-1}(V_\lambda)$, $p\circ s=
id$ , $p\circ s \stackunder{(p)}{\simeq_d}id $ and we can define $s':\uparrow \alpha^{-1}(V_\lambda)\rightarrow \uparrow
(\alpha\circ p_\alpha)^{-1}(V_\lambda)$ by $s'(x_\lambda)=(s(\alpha(x_\lambda)),x_\lambda)$, for which $p_\alpha\circ s'=id$ and $(s'\circ p_\alpha)
(e,x)=s'(x)=(s(\alpha(x)),x)=(sp(e),x)$, and then there is immediately the relation $s'\circ p_\alpha\stackunder{(p_\alpha)}{\simeq_d} id $. Now by
Prop. 3.1.9,(d)$\Rightarrow $ (a), $p_\alpha$ has d-SEP over $\uparrow\alpha^{-1}(V_\lambda)$, and then we apply Prop. \ref{prop.3.1.8} and Prop.
\ref{prop.3.1.10}, (a)$\Rightarrow$ (d).
\end{proof}

Let $p:\underline{E}\rightarrow  \underline{B}$ , $p':\underline{E}' \rightarrow \underline{B}$ directed maps and $f:E'\rightarrow \underline{E}$ a
d-map over $\underline{B}$, $p\circ f=p'$. Consider the following d-subspace $\underline{R}$ of $\underline{E}'\times \underline{E}^{\uparrow
\mathbf{I}}$ defined by
$$\underline{R}=\{(e',w)|pw(I)=p'(e'),w(1)=f(e')\},$$
and the map
$$q:\underline{R}\rightarrow \underline{E}, q(e',w)=w(0).$$
\begin{lem}\label{lem.3.1.12} Suppose that the following conditions are satisfied: 1) $p$ is saturated, 2)  all fibres of $p$ and $p'$ are symmetric directed spaces, 3) $f$ is a directed fibre homotopy equivalence. Then $q$ is a d-shrinkable d-map.
\end{lem}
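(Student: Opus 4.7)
The plan is to exhibit a d-section $\sigma:\underline{E}\to\underline{R}$ with $q\circ\sigma=\mathrm{id}_{\underline{E}}$ together with a vertical d-homotopy $\sigma\circ q\simeq_d\mathrm{id}_{\underline{R}}$ over $q$, so that Definition \ref{def.2.5.3}(c) delivers d-shrinkability of $q$. Hypothesis 3) supplies a d-fibre-homotopy inverse $g:\underline{E}\to\underline{E}'$ of $f$ together with vertical d-homotopies $f\circ g\stackunder{(p)}{\simeq_d}\mathrm{id}_{\underline{E}}$ and $g\circ f\stackunder{(p')}{\simeq_d}\mathrm{id}_{\underline{E}'}$. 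For clarity I first treat the one-step case, $H:fg\stackunder{(p)}{\preceq_d}\mathrm{id}_{\underline{E}}$ and $G:gf\stackunder{(p')}{\preceq_d}\mathrm{id}_{\underline{E}'}$; the genuine zigzag case is handled by iterating the one-step argument on each link, in the spirit of Proposition \ref{prop.3.1.6}.

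For the section I take $\sigma(e)=(g(e),\overline{H}_e)$ with $\overline{H}_e(t):=H(e,1-t)$. Verticality places $H(e,\cdot)$ in the fibre $p^{-1}(p(e))$, running from $fg(e)$ to $e$, and hypothesis 2) promotes the reversal $\overline{H}_e$ to a d-path in that same fibre, hence in $\underline{E}$. Thus $\sigma(e)\in\underline{R}$ and $q\circ\sigma(e)=\overline{H}_e(0)=e$. To see $\sigma$ is itself a d-map, the exponential law reduces matters to directedness of $\widetilde{H}:\underline{E}\times\uparrow\mathbf{I}\to\underline{E}$, $\widetilde{H}(e,t)=H(e,1-t)$. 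For any d-path $(\alpha,\beta)$ in the source, verticality gives $p\circ\widetilde{H}\circ(\alpha,\beta)=p\circ\alpha\in d\underline{B}$, and the saturation hypothesis 1) upgrades this to $\widetilde{H}\circ(\alpha,\beta)\in d\underline{E}$.

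Next I would construct the vertical d-homotopy as a short zigzag $\sigma\circ q\stackunder{(q)}{\preceq_d}\Psi\stackunder{(q)}{\succeq_d}\mathrm{id}_{\underline{R}}$. The first piece ``slides along $w$'':
\[
\Phi_1((e',w),s)=\bigl(g(w(s)),\,w_s\bigr),\qquad w_s(t)=\begin{cases}w(2st),&0\leq t\leq\tfrac12,\\ H(w(s),\,2-2t),&\tfrac12\leq t\leq 1,\end{cases}
\]
for which $w_s(0)=w(0)$ (so $\Phi_1$ is vertical over $q$), $w_s(1)=fg(w(s))=f(g(w(s)))$ (so $\Phi_1((e',w),s)\in\underline{R}$), and continuity at $t=\tfrac12$ follows from $H(w(s),1)=w(s)$. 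At $s=0$ one recovers $\sigma\circ q(e',w)$ up to a trivial reparametrisation which I absorb into the definition of $\sigma$; at $s=1$ one reaches $\Psi(e',w):=(gf(e'),w_1)$. The second piece then deforms $\Psi$ to $\mathrm{id}_{\underline{R}}$ using $G$: the $\underline{E}'$-coordinate moves $gf(e')$ to $e'$ along $G(e',\cdot)$, while the path-coordinate is simultaneously reshaped inside the fibre $p^{-1}(p'(e'))$ by splicing reversals of segments of $H(f(e'),\cdot)$ and of $f\circ G(e',\cdot)$ so as to land on $w$ (possibly modulo one final trivial reparametrisation homotopy, which adds one more harmless link to the zigzag). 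Each reversal lives in a fibre of $p$ or $p'$, where hypothesis 2) applies, and saturation of $p$ (hypothesis 1)) certifies that the whole parameterised construction is a d-map into $\underline{R}$.

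The main obstacle is the explicit, hand-made design of this second piece, where several fibrewise motions (running $w$ forward, running segments of $H(f(e'),\cdot)$ and of $f\circ G(e',\cdot)$ backward) must be spliced and reparametrised inside one fibre while remaining vertical over $q$. Each reversal invokes hypothesis 2), every directedness check lifts from $\underline{B}$ to $\underline{E}$ through hypothesis 1) (after $p$ the fibrewise motion disappears and one is just left with $p\circ\alpha$ for the relevant test d-path), and hypothesis 3) supplies the triple $(g,H,G)$ in the first place. In the genuine zigzag case for $H$ and $G$, one repeats the above construction on each link and concatenates, producing a longer zigzag of vertical d-homotopies in $\underline{R}$ which still yields $\sigma\circ q\simeq_d\mathrm{id}_{\underline{R}}$ over $q$, and hence d-shrinkability of $q$.
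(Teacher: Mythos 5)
Your section $\sigma(e)=(g(e),\overline{H}_e)$ is sound (it is the paper's $\sigma(e)=(f'(e),\psi_e)$ up to reversing the vertical homotopy, which hypothesis 2) permits), the verification $q\circ\sigma=\mathrm{id}_{\underline{E}}$ is correct, and your first homotopy $\Phi_1$, sliding the base point along $w$, is a legitimate vertical deformation from $\sigma\circ q$ (up to a reparametrisation link) to $\Psi(e',w)=\bigl(gf(e'),\,w\ast\overline{H}_{f(e')}\bigr)$; the directedness checks via saturation and fibre symmetry go through because everything stays in one fibre of $p$.

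The gap is in the second piece, and it is not merely a matter of writing out details. After moving the $\underline{E}'$-coordinate from $gf(e')$ to $e'$ along $G(e',\cdot)$ while extending the path-coordinate by ${}_s\bigl(f\circ G(e',\cdot)\bigr)$, you arrive at $\bigl(e',\,w\ast\overline{H}_{f(e')}\ast(f\circ G_{e'})\bigr)$, and to ``land on $w$'' you would need the loop $\overline{H}_{f(e')}\ast(f\circ G_{e'})$ at $f(e')$ to be contractible rel endpoints inside the fibre $p^{-1}(p'(e'))$. For an arbitrary pair of homotopies $(H,G)$ witnessing a (fibre) homotopy equivalence this loop is in general \emph{not} null-homotopic --- this is exactly the failure of the triangle identity for an unadjusted homotopy inverse --- so no amount of splicing reversed segments of $H(f(e'),\cdot)$ and $f\circ G(e',\cdot)$ alone will close the homotopy. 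The paper (following Dold's Lemma 3.4) avoids this obstruction by inserting the specifically engineered concatenation $u(e',w)=\varphi_{e'}\ast(f'w^{-})\ast(f'\psi_{w(0)})\ast(f'ff'w)\ast(f'f\varphi_{e'}^{-})\ast(f'w^{-})$ and then using the \emph{naturality} of the vertical homotopy $\psi$, i.e.\ the maps $\psi_{1-\tau}$ applied to entire paths $v$, to deform $ff'\circ v$ into $v$ at the cost of endpoint corrections $\psi^{-}$; this reduces the whole expression to cancellations of the form $\alpha\ast\alpha^{-}\simeq c$ and $\alpha\ast c\simeq\alpha$, which are always available. Your sketch never invokes this naturality device, and without it (or an equivalent adjustment of $G$ to satisfy the triangle identity, which would itself need proof in the directed, vertical setting) the argument cannot be completed as stated. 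The reduction of the general zigzag case to the one-step case should also be justified by fibre symmetry rather than by ``iterating on each link,'' but that is a minor point compared with the missing cancellation mechanism.
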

\begin{proof}We prove at first that $q$ is a d-map. Let $\alpha$ be a d-path in $\underline{R}$, $\alpha(t)=(e'(t),w(t))$, with $e'(.)\in d(\underline{E}')$ and
$w(.)\in d(\underline{E}^{\uparrow \mathbf{I}})$. Then $(q\circ \alpha)(t)=w(t)(0)$ and $p(q\circ \alpha)(t)=p(w(t)(0))=p'(e'(t))$, i.e., $p(q\circ \alpha)=
p'\circ e'(.)$. And since $p'\circ e'(.))\in d(\underline{B})$ and $p$ is saturated, we deduce that $q\circ \alpha\in d(\underline{E})$.

Then the condition on the fibres of $p$ and $p'$ allows us, as we shall see, to paraphrase the proof given in \cite{Dold1}, p.233 (Proof of Lemma 3.4). However we give this proof with the necessary specifications from the beginning and some details that can help the reader.

It is sufficient to prove that there exists a d-map $\sigma:\underline{E}\rightarrow \underline{R}$ such that $q\circ \sigma=id_{\underline{E}}$ and
$\sigma\circ q\stackunder{(q)}{\simeq_d} id_{\underline{R}}$ .

To define $\sigma$, use the hypothesis. Let $f':\underline{E}\rightarrow \underline{E}'$ denote a directed fiber homotopy inverse of $f$, $f\circ f'\stackunder{(p')}{\simeq_d}
id_{\underline{E}}$ and $f'\circ f\stackunder{(p)}{\simeq_d}id_{\underline{E}'}$. Due to the hypothesis on the fibres, it is sufficient to consider the cases
$id_{E'}\stackunder{(p')}{\preceq_d}f'f$ by a vertical d-homotopy $\varphi:\underline{E}'\times \uparrow \mathbf{I}\rightarrow \underline{E}'$, satisfying
$\varphi(e',0)=e',\varphi(e',1)=f'f(e'),p'\varphi(e',t)=p'(e'),(\forall) e'\in \underline{E}',t\in [0,1]$, and $id_{E}\stackunder{(p)}{\preceq_d} ff'$
by a vertical d-homotopy $\psi:\underline{E}\times \uparrow \mathbf{I}\rightarrow \underline{E}$, satisfying $\psi(e,0)=e,\psi(e,1)=ff'(e), p\psi(e,t)=
p(e),(\forall)e\in \underline{E},t\in [0,1] $.

Now define
$$\sigma:\underline{E}\rightarrow \underline{R}, \sigma(e)=(f'(e),\psi_e).$$
This is well defined since $\psi_e(1)=\psi(e,1)=f(f'(e))$ and $p\psi_e(t)=p\psi(e,t)=p(e)=p'(f'(e))$. And since $f'$ and $\psi$ are d-maps, it follows that
$\sigma$ is a d-map. For this we have $(q\sigma)(e)=q(f'(e),\psi_e)=\psi_e(0)=\psi(e,0)=e $, i.e., $q\circ \sigma=id_{\underline{E}}$. Then
$$(\sigma q)(e',w)=\sigma(w(0))=(f'(w(0)),\psi_{w(0)}). $$
In the proof that $\sigma\circ q\stackunder{(q)}{\simeq_d} id_{\underline{R}}$,for a pair $(e',w)\in \underline{R}$, the following notations are used:
1) for $\tau\in [0,1], _\tau w$, defined by $_\tau w(t)=w(t\tau)$, is obviously a d-path; 2) $w^{-}$ , defined by $w^{-}(t)=w(1-t)$ , is a d-path since
$w(I)\subset p^{-1}(p(e'))$; 3) $^\tau w$ defined by $^\tau(t)=w(1-\tau+t\tau)$, which is $^\tau w=(_\tau(w^{-}))^{-}$, so that this is also a
d-path. Then for $e'\in \underline{E}'$, $\varphi_{e'}$ is the d-path defined by $\varphi_{e'}(t)=\varphi(e',t)$ for which $\varphi_{e'}(I)\subset
p'^{-1}(p'(e'))$ so that $\varphi_{e'}^{-}$ is a d-path. Similarly, for $e\in \underline{E}$,$\psi_e(I)\subset p^{-1}(p(e)$, so that $\psi_e^{-}$ is a
d-path. Finally, for $\psi_{1-\tau}:\underline{E}\rightarrow \underline{E}$, defined by $\psi_{1-\tau}(e)=\psi (e,1-\tau)$, and for a d-path $\alpha\in
d(\underline{E})$ we have $(\psi_{1-\tau}\circ \alpha)(t)=\psi(\alpha(t),1-\tau)=\psi_{\alpha(t)}(1-\tau)=(\psi_{\alpha(t)})^{-}(\tau)$ , which shows that
$\psi_{1-\tau}$ is a d-map. Now we proceed to the construction of a series of directed vertical homotopies from $id_{\underline{R}}$ to $\sigma\circ q$.

At first we consider the d-homotopy $C:\underline{R}\times \uparrow \mathbf{I}\rightarrow \underline{R} $ defined by $C((e',w),t)=
(e',_{\frac{t+1}{2}}(w\ast c))$ , where $c$ is the constant path. This is well defined since $_{\frac{t+1}{2}}(w\ast c)(1)=(w\ast c)(\frac{t+1}{2})=
c(t)=w(1)=f(e')$, and $p_{\frac{t+1}{2}}(w\ast c)(\tau))=p'(e')$. Then $qC((e'w),t)=_{\frac{t+1}{2}}(w\ast c)(0)=w(0)=q(e'w)$, so that $C$ is a
vertical d-homotopy, with $C_0=id_{\underline{R}}$ and $C_1((e',w)=(e',w\ast c)$.
\begin{equation}
id_{\underline{R}}\stackunder{(q)}{\simeq_d}C_1
\end{equation}
Now consider the following d-map $u:\underline{R}\rightarrow \underline{E}'^{\uparrow \mathbf{I}}$ , defined by
$$ u(e',w)=\varphi_{e'}\ast (f'w^{-})\ast (f'\psi_{w(0)})\ast(f'ff'w)\ast(f'f\varphi_{e'}^{-})\ast (f'w^{-}).$$
The ends of this d-path are $u(e',w)(0)=e'$ and $u(e',w)(1)=(f'w^{-})(1)=f'(w(0))$, and $p'u(e',w)(I)=p'(e')$
This suggests to define the next d-homotopy
$$ U:\underline{R}\times \uparrow \mathbf{I} \rightarrow \underline{R},U((e'w),\tau)=(u(e',w)(\tau),w\ast (f\circ(_\tau u(e',w)).$$
This is well defined since $p(w\ast(f\circ _\tau u(e',w))(I)=pw\ast pf(_\tau u(e',w))(I)=(pw\ast p'_\tau u(e'w))(I)=p'(e')$ and
$(w\ast (f\circ _\tau u(e',w))(1)=f(_\tau u(e',w)(1))=f(u(e',w)(\tau))$. Then $qU((e',w),\tau)=(w\ast (f\circ _\tau u(e',w)))(0)=w(0)p'(e')=q(e'w)$.
And $U((e',w),0)=(u(e',w)(0),w\ast f\circ c)=(e',w\ast c)=C_1(e',w)$. Therefore
\begin{equation}
U:C_1\stackunder{(q)}{\simeq_d}U_1,
\end{equation}
with
$$ U_1(e',w)=(f'(w(0)),w\ast(f\circ u(e',w)).$$
We can write $U$ in a more convenient form, namely : $w\ast fu(e'w)=[w\ast(f\varphi_{e'})\ast(ff'w^{-})]\ast(ff'\psi_{w(0)})\ast (ff')\circ
[(ff'w)\ast(f\varphi_{e'}^{-})\ast w^{-}]$, and for $v:=(ff'w)\ast (f\varphi_{e'}^{-})\ast w^{-}$,we have
$$ U_1(e',w)=(f'(w(0)),v\ast(ff'\psi_{w(0)})\ast (ff'v^{-}).$$
Now by two homotopies similar $C$ we deduce that
\begin{equation} U_1\stackunder{(q)}{\simeq_d}{U'}_1,
\end{equation}
$$U'_1(e',w)=(f'(w(0)),v\ast c\ast(ff'\psi_{w(0)})\ast(ff'v^{-})\ast c).$$
Now define $U'':\underline{R}\times \uparrow \mathbf{I} \rightarrow \underline{R}$, by
$$U''((e',w),\tau)=(f'(w(0)),\theta:=v\ast _\tau(\psi_{w(0)}^{-})\ast \psi_{1-\tau}(\psi_{w(0)}\ast v^{-})\ast ^\tau(\psi_{w(0)})).$$
This is well defined since $\theta(1)=^\tau(\psi_{w(0)})(1)=\psi_{w(0)}(1-\tau+\tau)=\psi_{w(0)}(1)=\psi(w(0),1)=f(f'(w(0))$ and
$p\theta(I)=p'(e')=p(w(0))=p'(f'(w(0))$. Moreover, $qU''((e',w),\tau)=\theta(0)=v(0)=w(0)=q(e'w)$, so that $U''$ is a vertical d-homotopy, with
$U''((e',w),0)=(f'(w(0)),v\ast c\ast ff'(\psi_{w(0)} \ast v^{-})\ast c)=U'_1(e',w)$, and therefore
\begin{equation}U'':U'_1\stackunder{(q)}{\simeq_d}U''_1,
\end{equation}
where
$$U''_1(e',w)=(f'(w(0)),(v\ast \psi_{w(0)}^{-})\ast (v\ast \psi_{w(0)}^{-})^{-}\ast \psi_{w(0)}).$$
Finally, if we consider the d-map $U_1{'''}:\underline{R} \rightarrow \underline{R}$, defined by
$$U'''_1(e',w)=(f'(w(0)),c\ast \psi_{w(0)}),$$
it is immediate (by the usual homtopies for paths, $\alpha\ast\alpha^{-}\simeq c $ rel $\partial I$ and $\alpha\simeq \alpha\ast c$ rel $\partial I$) that,
\begin{equation}
\sigma\circ q\stackunder{(q)}{\simeq_d}U'''_1, U''_1\stackunder{(q)}{\simeq_d}U'''_1.
\end{equation}
In conclusion, by the relations 7.1-7.5, we have
$$\sigma\circ q\stackunder{(q)}{\simeq_d}U'''_1\stackunder{(q)}{\simeq_d}U''_1\stackunder{(q)}{\simeq_d}U'_1\stackunder{(q)}{\simeq_d}C_1
\stackunder{(q)}{\simeq_d}id_{\underline{R}}.$$
\end{proof}
\begin{thm}\label{thm.3.1.13} Let $p:\underline{E}\rightarrow \underline{B}$ , $p':\underline{E}'\rightarrow \underline{B}$ ,
$f:\underline{E}'\rightarrow \underline{E}$ denote d-maps satisfying the following properties: 1) $p$ and $p'$ are saturated d-maps and having both all
fibres symmetric directed spaces, 2) $f$ is over $\underline{B}$, i.e., $p\circ f=p'$, 3) $f$ is a directed fibre homotopy equivalence over each d-set $\uparrow V_\lambda$ of a d-numerable covering $\{V_\lambda\}$ of $\underline{B}$. Then $f$ is directed fibre homotopy equivalence.
\end{thm}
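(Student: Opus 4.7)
The plan is to exhibit a global fibrewise inverse to $f$ by reducing the problem to proving that the auxiliary map $q\colon\underline{R}\to\underline{E}$ of Lemma \ref{lem.3.1.12} is globally d-shrinkable. Once that is known, a section $\sigma\colon\underline{E}\to\underline{R}$ with $\sigma(e)=(f'(e),W_e)$ immediately delivers a candidate inverse $f'$ (its first coordinate) and a vertical d-homotopy $(e,t)\mapsto W_e(t)$ realising $id_{\underline{E}}\stackunder{(p)}{\simeq_d}f\circ f'$. To pass from local to global d-shrinkability of $q$ I intend to invoke Corollary \ref{cor.3.1.11}, which requires two things: that $q$ be d-shrinkable over each member of a d-numerable covering of $\underline{E}$, and that $q$ be saturated.

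For the local step, restrict over each $V_\lambda$: saturation and fibre symmetry of $p,p'$ pass to $p_\lambda,p'_\lambda$ automatically, the fibres are literally unchanged, and $f_\lambda$ is a dFHE by hypothesis 3. Lemma \ref{lem.3.1.12} applied to the triple $(p_\lambda,p'_\lambda,f_\lambda)$ gives that the associated $q_\lambda$ is d-shrinkable; and $q_\lambda$ is precisely the restriction of $q$ to $q^{-1}(p^{-1}(V_\lambda))\to\uparrow p^{-1}(V_\lambda)$. The cover $\{p^{-1}(V_\lambda)\}$ of $\underline{E}$ is d-numerable because, starting from a d-partition of unity $\{\pi_\gamma\}$ subordinate to $\{V_\lambda\}$, the family $\{\pi_\gamma\circ p\}$ is a d-partition of unity of $\underline{E}$ whose cozero supports refine $\{p^{-1}(V_\lambda)\}$.

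For saturation of $q$, take a continuous $\alpha(t)=(e'(t),w(t))\colon\mathbf{I}\to\underline{R}$ with $q\circ\alpha(t)=w(t)(0)\in d\underline{E}$. By the defining relation of $\underline{R}$ we have $p(w(t)(0))=p'(e'(t))$, so $p'\circ e'(\cdot)$ lies in $d\underline{B}$, and saturation of $p'$ promotes $e'(\cdot)$ to a d-path in $\underline{E}'$. By the exponential law, $w(\cdot)\in d(\underline{E}^{\uparrow\mathbf{I}})$ is equivalent to $W(t,s):=w(t)(s)$ being a d-map $\uparrow\mathbf{I}\times\uparrow\mathbf{I}\to\underline{E}$; saturation of $p$ reduces this to $p\circ W(t,s)=p'(e'(t))$ being a d-map, which is clear from reparametrisation applied to the d-path $p'\circ e'(\cdot)$. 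Hence $\alpha\in d\underline{R}$, so $q$ is saturated, and Corollary \ref{cor.3.1.11} yields global d-shrinkability of $q$.

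It remains to recover $f'\circ f\stackunder{(p')}{\simeq_d}id_{\underline{E}'}$ by transporting the vertical zigzag $\sigma\circ q\stackunder{(q)}{\simeq_d}id_{\underline{R}}$ through the d-map $j\colon\underline{E}'\to\underline{R}$, $j(e')=(e',c_{f(e')})$, which satisfies $q\circ j=f$, and through the projection $\pi_{\underline{E}'}\colon\underline{R}\to\underline{E}'$, which satisfies $p'\circ\pi_{\underline{E}'}=p\circ q$. Pre-composing each stage of the zigzag with $j\times id_{\uparrow\mathbf{I}}$ and post-composing with $\pi_{\underline{E}'}$ turns $q$-verticality into $p'$-verticality and produces a zigzag of $p'$-vertical d-homotopies between $\pi_{\underline{E}'}\circ\sigma\circ q\circ j=f'\circ f$ and $\pi_{\underline{E}'}\circ j=id_{\underline{E}'}$. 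The main obstacle I anticipate is the saturation verification, together with the careful bookkeeping needed to transfer the zigzag through $j$ and $\pi_{\underline{E}'}$ while preserving $p'$-verticality at every stage.
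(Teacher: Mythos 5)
Your proposal is correct, and its core coincides with the paper's proof: both reduce the theorem to the global d-shrinkability of $q\colon\underline{R}\to\underline{E}$, obtain this from Corollary \ref{cor.3.1.11} after checking that $q$ is saturated (your saturation argument via the exponential law and the identity $p(w(t)(s))=p'(e'(t))$ is exactly the paper's, stated more cleanly) and that $q$ restricts over the d-numerable covering $\{p^{-1}(V_\lambda)\}$ to the locally d-shrinkable maps $q_\lambda$ supplied by Lemma \ref{lem.3.1.12}, and then read off $f'$ and the vertical homotopy $id_{\underline{E}}\stackunder{(p)}{\simeq_d}f\circ f'$ from a global section of $q$. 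Where you genuinely diverge is the last step, recovering $f'\circ f\stackunder{(p')}{\simeq_d}id_{\underline{E}'}$. The paper does this by bootstrapping: it observes that the vertical homotopy $id_{\underline{E}}\stackunder{(p)}{\simeq_d}ff'$ makes $f'$ a local dFHE over each $\uparrow V_\lambda$, reruns the entire construction with $f'$ in place of $f$ to produce $f''$ with $f'f''\stackunder{(p')}{\simeq_d}id_{\underline{E}'}$, and concludes by the usual two-sided-inverse juggling $f'f\simeq (f'f)(f'f'')=f'(ff')f''\simeq f'f''\simeq id_{\underline{E}'}$. You instead transport the defining zigzag $\sigma\circ q\stackunder{(q)}{\simeq_d}id_{\underline{R}}$ through $j(e')=(e',c_{f(e')})$ and the projection $\pi_{\underline{E}'}$; the identities $q\circ j=f$ and $p'\circ\pi_{\underline{E}'}=p\circ q$ do convert $q$-verticality of every stage into $p'$-verticality over $p'(e')$, and the endpoints come out as $f'f$ and $id_{\underline{E}'}$, so this is a valid and more economical argument: it avoids the second pass through Corollary \ref{cor.3.1.11} at the cost of exploiting the explicit structure of $\underline{R}$ rather than arguing formally with fibrewise inverses.
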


\begin{proof}
We denote by $p_\lambda=p|\uparrow p^{-1}(V_\lambda) $ and $p'_\lambda=p'|\uparrow p'^{-1}(V_\lambda)$ and by $q_\lambda$ the
d-map $q$ from Lemma \ref{lem.3.1.12} defined for the d-maps $p_\lambda$ and $p'_\lambda$. We have that $q_\lambda$ is d-shrinkable. Then
$\{p^{-1}(V_\lambda)\}$ is a d-numerable covering of $\underline{E}$ and $q_\lambda$ is the restriction of $q$ . To apply Lemma \ref{lem.3.1.12}
we need to verify  that $q$ is saturated. Suppose that $\alpha(t)=(e'(t),w(t)$ is a path in $R$ such that $q\circ \alpha$ is a d-path in $\underline{E}$,
i.e., $w(.)(0)$ is a d-path in $\underline{E}$. It follows that $p\circ w(.)(0)$ is a d-path in $\underline{B}$. But $p(w(t)(0))=p'(e'(t))$, which means
that $p\circ w(.)(0)=p'\circ e'(.)$. Therefore $p'\circ e'(.)$ is a d-path in $\underline{B}$ . Then because $p'$ is saturated, we deduce that $e'(.)
\in d(\underline{E}')$. By the relation $p'(e'(t))=p(w(t)(I)$, we deduce that $w(t)\in d(\underline{E})$, $(\forall) t\in [0,1]$. The relation $w(t)(1)=
f(e'(t)$ shows that $w(.)(1)=f\circ e'(.)$ and therefore $w(.)(1)$ is directed. Finally from the equality $p(w(t)(t'))=p(w(t)(1)$ $(\forall)t $, we conclude that $w\in d(\underline{E}^{\uparrow \mathbf{I}})$, and therefore $\alpha\in d(\underline{R})$. Now applying Corollary \ref{cor.3.1.11} we obtain that $q$ is d-shrinkable. Then by Prop. \ref{prop.3.1.6} $q$ has the d-SEP.

Consider a section $S:\underline{E}\rightarrow \underline{R}$ for $q$ written as $S=(f',\theta)$ with $f':\underline{E}\rightarrow \underline{E}'$ a d-map and $\theta:\underline{E}\rightarrow \underline{E}^{\uparrow \mathbf{I}}$ also a d-map. From the relations $p'(f'(e))=p(\theta(e)(I))$, $qS=id_{\underline{E}}$, i.e., $\theta(e)(0)=e$, we deduce that $p'\circ f'=p$ , i.e., $f'$ is a d-map over $\underline{B}$. Then we can define $\Theta:
\underline{E}\times \mathbf{I}\rightarrow \underline{E}$ by $\Theta(e,t)=\theta(e)(t)$. This is a vertical d-homotopy, $p\Theta(e,t)=p\theta(e)(t)=p'f'(e)=
p(e)$, with $\Theta(e,0)=\theta(e)(0)=e$ and $\Theta(e,1)=\theta(e)(1)=f(f'(e))$. Therefore $\Theta:id_{\underline{E}}\stackunder{(p)}{\simeq_d}ff'$. This
relation implies that, over $\uparrow V_\lambda$, the d-map $f'$ is a directed fibre homotopy inverse to $f$. In particular it is a directed fibre homotopy equivalence. Then we can apply the above argument to $f'$ instead of $f$, and find $f'':\underline{E}'\rightarrow \underline{E}$ with
$f'f''\stackunder{(p)}{\simeq_d}id_{\underline{E}'}$, hence $f'f\stackunder{(p)}{\simeq_d} (f'f)(f'f'')=f'(ff')f''\stackunder{(p)}{\simeq_d}f'f''
\stackunder{(p)}{\simeq_d}id_{\underline{E}'}$.
\end{proof}
\begin{rem}\label{rem.3.1.14}Under the same conditions as in Theorem \ref{thm.3.1.13}, let $\uparrow V$ be a d-halo of $\uparrow A\subset \underline{B}$. Suppose
that the restriction of $f$, $f_V:\uparrow p'^{-1}(V)\rightarrow \uparrow p^{-1}(V)$, is a directed fiber homotopy equivalence, with $f^{-}_V:\uparrow
p^{-1}(V)\rightarrow \uparrow p'^{-1}(V)$ as a d-homotopy inverse. Then $f^{-}_A$ and the restrictions of all d-homotopies which determine the vertical
d-homotopy equivalence $id_{\uparrow p^{-1}(V)}\stackunder{(V)}{\simeq_d}f_V\circ f^{-}_V$ can be extended over all $\underline{B}$. This is obtained if
in the proof of Theorem \ref{thm.3.1.13} we choose (using the d-SEP) the section $S=(f',\theta)$ such that $f'|\uparrow p^{-1}( A)= f_A^{-}$ and $\Theta
(z,t)=\theta(z)(t)$ to be over $\uparrow A\times \uparrow \mathbf{I}$ a certain d-homotopy.

\end{rem}
\begin{thm}\label{thm.3.1.15}Let $\underline{B}$ a directed space which admits a directed numerable covering $\{V_\lambda\}$ such that the inclusion d-map
$\uparrow V_\lambda\rightarrow \underline{B}$ is directed null-homotopic for every $\lambda$. Let $p:\underline{E}\rightarrow \underline{B}$ ,
$p':\underline{E}'\rightarrow \underline{B}$ be saturated directed weak fibrations and having both all fibres symmetric directed spaces. Then a d-map
$f:\underline{E}\rightarrow \underline{E}'$ over $\underline{B}$, $p'\circ f=p$, is a directed fibre homotopy equivalence if and only if the restriction
of $f$ to every fibre, $f_b:\uparrow p^{-1}(b)\rightarrow \uparrow p'^{-1}(b)$, $b\in \underline{B}$ is an (ordinary) directed homotopy equivalence.
\end{thm}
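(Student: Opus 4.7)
The biconditional splits into an easy and a substantial direction. For $(\Rightarrow)$, suppose $g:\underline{E}'\to\underline{E}$ is a directed fibre homotopy inverse of $f$, realized by vertical d-homotopies $g\circ f\stackunder{(p)}{\simeq_d} id_{\underline{E}}$ and $f\circ g\stackunder{(p')}{\simeq_d} id_{\underline{E}'}$. The vertical condition $p\circ\varphi_k(x,t)=p(x)$ forces each constituent homotopy to remain within the appropriate fibre, and restricting to a single $b\in\underline{B}$ produces d-homotopies in $\uparrow p^{-1}(b)$ and $\uparrow p'^{-1}(b)$ exhibiting $g_b$ as a d-homotopy inverse of $f_b$.

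For $(\Leftarrow)$, I would apply Theorem \ref{thm.3.1.13} to the given d-numerable covering $\{V_\lambda\}$. The hypotheses of saturation and symmetry of fibres in the present statement exactly match those required by Theorem \ref{thm.3.1.13}, and Proposition \ref{prop.2.2.13} ensures that the restrictions $p_\lambda:=p|\uparrow p^{-1}(V_\lambda)\to\uparrow V_\lambda$ and $p'_\lambda$ are directed weak fibrations. Since $(f_\lambda)_b=f_b$ is a d-homotopy equivalence for every $b\in V_\lambda$ by hypothesis, it suffices to prove the following local statement: for each $\lambda$, the restriction $f_\lambda:\uparrow p^{-1}(V_\lambda)\to\uparrow p'^{-1}(V_\lambda)$ is a directed fibre homotopy equivalence over $\uparrow V_\lambda$.

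Fix $\lambda$ and let $H:\uparrow V_\lambda\times\uparrow\mathbf{I}\to\underline{B}$ realize one step of the null-homotopy $i_\lambda\simeq_d c_{b_\lambda}$. The pullbacks $H^*p,\,H^*p'$ are directed weak fibrations over $\uparrow V_\lambda\times\uparrow\mathbf{I}$ (Proposition \ref{prop.2.2.13}), restricting at $t=0$ to $p_\lambda,\,p'_\lambda$ and at $t=1$ to the trivial projections $\uparrow V_\lambda\times F\to\uparrow V_\lambda$ and $\uparrow V_\lambda\times F'\to\uparrow V_\lambda$, where $F=\uparrow p^{-1}(b_\lambda)$ and $F'=\uparrow p'^{-1}(b_\lambda)$; the pulled-back morphism restricts at $t=1$ to $id_{\uparrow V_\lambda}\times f_{b_\lambda}$, which is obviously a dFHE. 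The plan is then to transfer this across the cylinder to $f_\lambda$ at $t=0$. Concretely, using the d-weak covering homotopy property for $H^*p$ (equivalently the d-semistationary lifting pair of Theorem \ref{thm.2.4.2}) applied to the canonical section over $\uparrow V_\lambda\times\{0\}$ and to the identity d-homotopy along the cylinder coordinate, one constructs a comparison d-map from $p_\lambda$ to the trivial bundle over $\uparrow V_\lambda$; doing the same at the opposite face by means of Proposition \ref{prop.2.2.11} and iterating the symmetric-fibre construction used in Lemma \ref{lem.3.1.12} produces a fibrewise d-homotopy inverse. By Theorem \ref{thm.2.5.2} a d-map between d-weak fibrations over the same base is a dFHE if and only if it is an ordinary d-homotopy equivalence, so the resulting fibre h.e.\ at $t=1$ propagates to a dFHE at $t=0$, namely $f_\lambda$. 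Theorem \ref{thm.3.1.13} then assembles these local dFHE's into a global one.

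The principal obstacle is making this cylinder transfer precise despite the non-reversibility of d-homotopies. The null-homotopy $i_\lambda\simeq_d c_{b_\lambda}$ is in general a zigzag $i_\lambda\preceq\varphi_1\succeq\varphi_2\preceq\cdots\succeq c_{b_\lambda}$, and each arrow has to be handled separately: $\preceq$-steps via Corollary \ref{cor.2.3.3}(ii), $\succeq$-steps via its reflected version using Corollary \ref{cor.2.2.12} together with Proposition \ref{prop.2.2.11}. The saturation hypothesis on $p$ and $p'$ and the symmetry of their fibres, which were the essential technical inputs to Lemma \ref{lem.3.1.12}, play the same role here: they permit the propagation of fibre-level data in both directions along each step of the zigzag. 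Once each single-step trivialization is established, the steps compose to yield the desired fibrewise d-homotopy equivalence over $\uparrow V_\lambda$, and the local applicability of Theorem \ref{thm.3.1.13} concludes the proof.
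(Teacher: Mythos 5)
Your overall architecture coincides with the paper's: the forward implication is immediate from the verticality of the homotopies, and the converse is reduced via Theorem \ref{thm.3.1.13} to showing that $f_\lambda:\uparrow p^{-1}(V_\lambda)\to\uparrow p'^{-1}(V_\lambda)$ is a directed fibre homotopy equivalence for each $\lambda$. Where you diverge is in how you close this local step, and this is where your argument has a gap. The paper disposes of it in one line by invoking Corollary \ref{cor.2.5.5}, which is stated earlier precisely for this purpose: for directed weak fibrations over a base into which the relevant subspace includes null-homotopically, a fibrewise directed homotopy equivalence on a single fibre promotes to a dFHE. You do not cite this corollary; instead you set out to reprove its content from scratch by pulling back along the null-homotopy, comparing with the trivial bundles at $t=1$, and transferring the equivalence back across the cylinder.

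That reproof is only a plan, not a proof. The decisive steps are asserted rather than carried out: ``one constructs a comparison d-map from $p_\lambda$ to the trivial bundle,'' ``iterating the symmetric-fibre construction \ldots produces a fibrewise d-homotopy inverse,'' ``the steps compose to yield the desired fibrewise d-homotopy equivalence.'' You yourself name the principal obstacle --- the null-homotopy is a zigzag of non-reversible directed homotopies, each arrow of which must be handled separately --- and then do not resolve it; you only indicate which tools (Corollary \ref{cor.2.3.3}, Corollary \ref{cor.2.2.12}, Proposition \ref{prop.2.2.11}) might be relevant to each type of arrow. In the directed setting this transfer is exactly the nontrivial content that Corollary \ref{cor.2.5.5} packages, so leaving it at the level of a sketch leaves the converse implication unproved. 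The repair is immediate: replace the entire cylinder-transfer discussion with an application of Corollary \ref{cor.2.5.5} to the restricted directed weak fibrations over $\uparrow V_\lambda$ (which are directed weak fibrations by Proposition \ref{prop.2.2.13}, as you correctly note), and the proof then matches the paper's.
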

\begin{proof}
If $f$ is a directed fibre homotopy equivalence then it is immediate that  $f_b$ is a directed homotopy equivalence for every $b\in \underline{B}$. For the
converse, by Theorem \ref{thm.3.1.13}, it is enough to show that $f_V:\uparrow p^{-1}(V)\rightarrow \uparrow p'^{-1}(V)$ is a directed fibre homotopy equivalence. But in the given conditions this follows from Corollary \ref{cor.2.5.5}. This completes the proof of the theorem.
\end{proof}

\subsection{Transition from local to global of dWCHP-a tom Dieck-Kamps-Puppe type theorem}

To prove a theorem of the type local-global for the d-WCHP, we follow the approach from \cite{Dieck}, Prop.9.5, for the undirected case.
\begin{prop}\label{prop.3.2.1}Let $\mathcal{U}=\{U_j|j\in J\}$ be a d-numerable covering of the product $X\times \uparrow \mathbf{I}$. Then there exists a d-numerable covering $\{V_k| k\in K\}$ of $\underline{X}$ and a family $\{\varepsilon_k|k\in K\}$ of positive real numbers, such that for $t_1,t_2\in [0,1],0 \leq t_2-t_1<\varepsilon_k$ there is an index $j\in J$ for which $V_k\times [t_1,t_2]\subset U_j$.
\end{prop}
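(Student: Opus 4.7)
The plan is to mimic the argument of Kamps--Puppe / tom Dieck (\cite{Dieck}, Prop.~9.5) proved for numerable coverings of $X\times\mathbf{I}$ in ordinary topology, and to check the directedness of every auxiliary function used. The hypothesis produces a directed locally finite partition of unity $\{\pi_{\gamma}\}_{\gamma\in\Gamma}$ subordinate to $\mathcal{U}$, with $\pi_\gamma^{-1}(0,1]\subset U_{j(\gamma)}$ via some choice function $j:\Gamma\to J$.

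For each $n\geq 1$ and each tuple $\mathbf{c}=(\gamma_1,\dots,\gamma_n)\in\Gamma^n$ I introduce the auxiliary function
$$\sigma_{\mathbf{c}}(x)\;:=\;\min_{1\leq k\leq n}\,\inf_{t\in I^n_k}\pi_{\gamma_k}(x,t),\qquad I^n_k:=\bigl[\tfrac{k-1}{n},\tfrac{k+1}{n}\bigr]\cap[0,1],$$
and set $V_{\mathbf{c}}:=\sigma_{\mathbf{c}}^{-1}(0,1]$, $\varepsilon_{\mathbf{c}}:=1/n$. Then $\sigma_{\mathbf{c}}$ is continuous by uniform continuity of $\pi_{\gamma_k}$ on the compact strips $\{x\}\times I^n_k$, and it is a d-map: for any $\alpha\in d\underline{X}$ and any fixed $s\in[0,1]$ the pair $(\alpha,c_s)$ lies in $d(\underline{X}\times\uparrow\mathbf{I})$ (the constant $c_s$ being directed in $\uparrow\mathbf{I}$), hence $t\mapsto\pi_{\gamma_k}(\alpha(t),s)$ is weakly increasing; taking $\inf_s$ and then $\min_k$ preserves this monotonicity. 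The geometric property is then immediate: $x\in V_{\mathbf{c}}$ forces $\pi_{\gamma_k}(x,\cdot)>0$ on $I^n_k$, so $V_{\mathbf{c}}\times I^n_k\subset U_{j(\gamma_k)}$, and any interval $[t_1,t_2]\subset[0,1]$ with $t_2-t_1<1/n$ lies entirely in some $I^n_k$. Covering of $\underline{X}$ follows from a Lebesgue-number argument on the compact slice $\{x\}\times\mathbf{I}$, using local finiteness of $\{\pi_\gamma\}$ to reduce to a finite subfamily of supports before extracting the Lebesgue number.

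The hard part is establishing d-numerability of the refined cover. For fixed $n$ the subfamily $\{\sigma_{\mathbf{c}}\}_{\mathbf{c}\in\Gamma^n}$ is locally finite: a neighbourhood of $\{x\}\times\mathbf{I}$ meets only finitely many supports $\pi_{\gamma_1},\dots,\pi_{\gamma_{M(x)}}$, bounding the tuples with $\sigma_{\mathbf{c}}>0$ by $M(x)^n$. The classical construction then combines the families for all $n$ via a suitable weight and normalization procedure so as to produce a single locally finite partition of unity subordinate to $\{V_{\mathbf{c}}\}$; the key observation for our setting is that this step uses only the operations $\inf$, $\min$, $\max$, multiplication by constants in $[0,1]$, and normalization by an everywhere positive continuous function --- each of which preserves the property of being a d-map, as verified in the previous paragraph. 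Thus the resulting partition of unity is directed, and we obtain a d-numerable refinement of $\{V_{\mathbf{c}}\}$ satisfying the conclusion.
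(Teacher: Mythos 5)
Your proof follows essentially the same route as the paper's: both take the Dold/tom Dieck refinement built from minima of the partition functions over overlapping subintervals of length $2/n$, verify directedness by the same slicewise monotonicity argument (a constant path in the $\uparrow\mathbf{I}$ factor is directed, so each $x\mapsto\pi_{\gamma_k}(x,s)$ is a d-map and minima/infima of weakly increasing functions stay weakly increasing), and then defer the numerability of the combined family to the classical construction --- the paper uses a product of minima where you use a minimum of infima, which is immaterial. One shared caution: the classical normalization that makes $\bigcup_n\{V_{\mathbf{c}}\}_{\mathbf{c}\in\Gamma^n}$ into a locally finite partition of unity uses functions of the form $\max\bigl(0,\sigma_{\mathbf{c}}-n\sum(\cdots)\bigr)$, and subtraction is not among the operations you list as preserving d-maps (a difference of weakly increasing functions need not be monotone), so the d-numerability of the refined cover is not fully justified by your last paragraph; the paper elides exactly the same point by citing \cite{Dieck}, so this is a gap you inherit rather than introduce.
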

\begin{proof}The proof is the same as that of Theorem 8.3 from \cite{Dieck} and we just point out some aspects of d-topology. Denote by $\{u_j|j\in J\}$
a directed locally finite partition of unity such that $U_j=u_j^{-1}((0,1])$, $j\in J$. For each r-tuple $k=(j_1,...,j_r)\in J^r$ define the continuous map $v_k:X\rightarrow I$ by
$$ v_k(x)=\stackunder{i=1}{\stackrel{r}{\prod}}Min(u_{j_i}(x,t)|t\in [\frac{i-1}{r+1},\frac{i+1}{r+1}]).$$
We can prove that this is a directed map $v_k:\underline{X}\rightarrow \uparrow \mathbf{I}$. Indeed suppose that $\alpha\in d\underline{X}$ and $t_1<
t_2$. Then because $u_{j_i}:\underline{X}\times \uparrow\mathbf{I}\rightarrow \uparrow \mathbf{I}$ is a d-map $u_{j_i}(\alpha(t_1),t)\leq u_{j_i}
(\alpha(t_2)$, $(\forall)t\in [\frac{i-1}{r+1},\frac{i+1}{r+1}]$, which implies $Min(u_{j_i}(\alpha(t_1),t)\leq Min(u_{j_i}(\alpha)t_2),t)$,
$(\forall)t\in [\frac{i-1}{r+1},\frac{i+1}{r+1}]$. Therefore $(v_k\circ \alpha)(t_1)\leq (v_k\circ \alpha)(t_2)$ which proves that $v_k$ is a directed
map. Consider $K=\stackunder{r=1}{\stackrel{\infty}{\bigcup}}J^r$. It proves (\cite{Dieck},pp.144-145) that $\{V_k:=v_k^{-1}((0,1])|k\in K\}$ is a (d-)numerable covering of $\underline{X}$, and for $\varepsilon_k=\frac{1}{2r}$ for $k=(j_1,...,j_r)$, the conclusion of the theorem is satisfied.
\end{proof}
\begin{prop}\label{prop.3.2.2}Let $p:\underline{E}\rightarrow \underline{B}$ be a d-map and $0<\varepsilon<1$. The following properties of $p$ are equivalent:

(a) $p$ is a directed weak fibration,

(b) 1) For each directed space $\underline{X}$ and all d-maps $\Phi':X\times \uparrow [0,\varepsilon]\rightarrow \underline{E}$, $\varphi:\underline{X}\times  \uparrow \mathbf{I}$ with $p\circ \Phi'=\varphi|X\times \uparrow [0,\varepsilon]$, there exists a d-homotopy
$\Phi:\underline{X}\times \uparrow \mathbf{I}\rightarrow \underline{E}$, with $p\circ \Phi=\varphi$ and $\Phi_0=\Phi'_0$,

and

2)For each directed space $\underline{X}$ and all d-maps $\Phi':X\times \uparrow [\varepsilon,1]\rightarrow \underline{E}$, $\varphi:\underline{X}\times  \uparrow \mathbf{I}$ with $p\circ \Phi'=\varphi|X\times \uparrow [\varepsilon,1]$, there exists a d-homotopy
$\Phi:\underline{X}\times \uparrow \mathbf{I}\rightarrow \underline{E}$, with $p\circ \Phi=\varphi$ and $\Phi_1=\Phi'_1$.
\end{prop}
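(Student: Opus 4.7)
The proposition is a local-in-time characterization of the dWCHP: partial lifts on $\underline{X}\times \uparrow[0,\varepsilon]$ (resp.\ $\underline{X}\times \uparrow[\varepsilon,1]$) extend to full lifts of $\varphi$ with strict equality at the time-zero (resp.\ time-one) slice. The strategy is to move between this formulation and the semistationary characterization of dWCHP given by Theorem \ref{thm.2.3.2} and Corollary \ref{cor.2.3.3}.

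For (b)$\Rightarrow$(a), the plan is to verify the dWCHP directly (Definition \ref{def.2.2.2}). Consider $\alpha=0$; the case $\alpha=1$ is symmetric, using (b)2. Given $f':\underline{X}\rightarrow \underline{E}$ and $\varphi:\underline{X}\times \uparrow\mathbf{I}\rightarrow \underline{B}$ with $\varphi\circ \partial^0=p\circ f'$, I reparametrize $\varphi$ to a d-homotopy $\tilde{\varphi}$ that is stationary on $[0,\varepsilon]$ at the value $pf'$: set $\tilde{\varphi}(x,t)=pf'(x)$ for $t\in [0,\varepsilon]$ and $\tilde{\varphi}(x,t)=\varphi(x,(t-\varepsilon)/(1-\varepsilon))$ for $t\in [\varepsilon,1]$. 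The constant map $\Phi'(x,t)=f'(x)$ is a partial lift of $\tilde{\varphi}$ on $\underline{X}\times \uparrow[0,\varepsilon]$. Applying (b)1 produces $\tilde{\Phi}$ on $\underline{X}\times \uparrow\mathbf{I}$ with $p\tilde{\Phi}=\tilde{\varphi}$ and $\tilde{\Phi}_0=f'$. Then $\varphi'(x,t):=\tilde{\Phi}(x,\varepsilon+(1-\varepsilon)t)$ is a d-lift of $\varphi$, and $\varphi'_0=\tilde{\Phi}_\varepsilon$ is vertically d-homotopic to $f'$ via $H(x,t):=\tilde{\Phi}(x,\varepsilon t)$, which is a d-map satisfying $p\circ H=pf'$ because $\tilde{\Phi}$ lifts the stationary $\tilde{\varphi}$ on $[0,\varepsilon]$. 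This establishes $\varphi'\circ \partial^0 \stackunder{(p)}{\simeq_d} f'$, i.e.\ the dWCHP for $\alpha=0$.

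For (a)$\Rightarrow$(b)1, I will use Corollary \ref{cor.2.3.3}(i). Given $\Phi'$ on $\underline{X}\times \uparrow[0,\varepsilon]$ with $p\Phi'=\varphi|_{\underline{X}\times \uparrow[0,\varepsilon]}$, I insert an artificial stationary segment into a reparametrization of $\varphi$: define $\tilde{\varphi}$ on $\underline{X}\times \uparrow\mathbf{I}$ to be constantly $\varphi(x,0)=p\Phi'_0(x)$ on $[0,\varepsilon/2]$ and to equal the reparametrization $\varphi(x,2(t-\varepsilon/2)/(2-\varepsilon))$ on $[\varepsilon/2,1]$. Applying Corollary \ref{cor.2.3.3}(i) with stationarity interval $[0,\varepsilon/2]$ and initial data $\Phi'_0$ yields a d-map $\tilde{\Phi}$ with $\tilde{\Phi}_0=\Phi'_0$ and $p\tilde{\Phi}=\tilde{\varphi}$. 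The desired lift $\Phi$ is then assembled by using $\Phi'$ itself on an initial sub-interval (so that $\Phi_0=\Phi'_0$ strictly) and a rescaling of the terminal portion of $\tilde{\Phi}$ on the complementary interval, choosing the splice so the two pieces agree and their concatenation remains a d-map. The case (b)2 is handled analogously using Corollary \ref{cor.2.3.3}(ii).

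The main obstacle is the direction (a)$\Rightarrow$(b)1: enforcing the strict equality $\Phi_0=\Phi'_0$ while lifting the genuinely non-stationary $\varphi$ on the remainder of the cylinder. Corollary \ref{cor.2.3.3} supplies strict endpoint equality only when a stationary tail is present, so the given partial lift $\Phi'$ must be used essentially to bridge the strict initial value $\Phi'_0$ to the lift produced by the Corollary. Because d-paths are not reversible, one must verify directedness at the splice point with care rather than relying on a symmetric argument as in the undirected setting.
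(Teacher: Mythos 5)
Your direction (b)$\Rightarrow$(a) is correct and matches the paper in substance: the paper dispatches it as the ``reciprocal'' of Corollary \ref{cor.2.3.3} combined with Theorem \ref{thm.2.3.2}, and your explicit reparametrisation together with the vertical homotopy $H(x,t)=\tilde{\Phi}(x,\varepsilon t)$ is just that argument written out.

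The direction (a)$\Rightarrow$(b)1 has a genuine gap at the assembly step. After invoking Corollary \ref{cor.2.3.3}(i) you hold two lifts that agree only on the slice $t=0$: the given $\Phi'$ on $\underline{X}\times\uparrow[0,\varepsilon]$ and the new $\tilde{\Phi}$ on the whole cylinder with $\tilde{\Phi}_0=\Phi'_0$. You propose to use $\Phi'$ on an initial sub-interval and a rescaled terminal portion of $\tilde{\Phi}$ afterwards, ``choosing the splice so the two pieces agree''; but at any positive splice time the slice of $\Phi'$ and any reparametrised slice of $\tilde{\Phi}$ are merely two lifts landing in the same fibre, with no reason whatever to coincide, and no choice of splice point repairs this. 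Nor can you avoid the splice by a single reparametrisation $\tilde{\Phi}(x,g(t))$ with $g(0)=0$: since $\tilde{\varphi}$ is stationary near $0$ and $\varphi$ need not be, such a map cannot project to $\varphi$. You flag exactly this difficulty in your closing paragraph, but flagging it is not bridging it. The paper's resolution is a two-parameter device you are missing: apply the dWCHP to the partial cylinder $\underline{X}\times\uparrow[0,\tfrac{1}{2}]$ itself (reducing to $\varepsilon=\tfrac{1}{2}$), against $\widetilde{\varphi}(x,s,t)=\varphi(x,1-(1-s)(1-t))$, whose $t=0$ face is $p\circ\Phi'$. The dWCHP then yields $\widetilde{\Phi}$ together with a \emph{vertical} d-homotopy $\Psi$ between the entire map $\Phi'$ and $\widetilde{\Phi}_0$ --- not merely between their $0$-slices --- and the lift is assembled as $\Phi(x,t)=\Psi(x,t,2t)$ for $t\le\tfrac{1}{2}$ and $\Phi(x,t)=\widetilde{\Phi}(x,\tfrac{1}{2},2t-1)$ for $t\ge\tfrac{1}{2}$: verticality of $\Psi$ gives $p\Psi(x,t,2t)=p\Phi'(x,t)=\varphi(x,t)$, the formula for $\widetilde{\varphi}$ gives $p\widetilde{\Phi}(x,\tfrac{1}{2},2t-1)=\varphi(x,t)$, the pieces match at $t=\tfrac{1}{2}$, and $\Phi_0=\Psi(\cdot,0,0)=\Phi'_0$. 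One must additionally treat the cases where the vertical equivalence $\widetilde{\Phi}_0\stackunder{(p)}{\simeq_d}\Phi'$ is realised by a homotopy in the opposite direction or by a zigzag, since d-homotopies are not reversible; the paper does this case by case. Without this (or an equivalent mechanism producing a vertical homotopy out of all of $\Phi'$, not just $\Phi'_0$) your proof of (a)$\Rightarrow$(b) does not close.
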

\begin{proof}
The implication (b)$\Rightarrow$ (a)is an immediate consequence of the obvious reciprocal result of Corollary \ref{cor.2.3.3}.

(a)$\Rightarrow$ (b)1). It is sufficient to consider only the case $\varepsilon=\frac{1}{2} $. Therefore we have given $\Phi':\underline{X}\times \uparrow [0,\frac{1}{2}]\rightarrow \underline{E}$, $\varphi:\underline{X}\times \uparrow \mathbf{I}\rightarrow \underline{B}$ with $p\circ \Phi'=
\varphi|\underline{X}\times \uparrow [0,\frac{1}{2}]$. Define $\widetilde{\varphi}:\underline{X}\times \uparrow [0,\frac{1}{2}]\times \uparrow
\mathbf{I}\rightarrow \underline{B}$ by $\widetilde{\varphi}(x,s,t):=\varphi(x,1-(1-s)(1-t))$. This is a d-map since the function $\uparrow
[0,\frac{1}{2}]\times \uparrow \mathbf{I}$, $(s,t)\rightarrow 1-(1-s)(1-t)$ is a d-map which satisfies $\widetilde{\varphi}(x,s,0)=p\Phi'(x,s)$.
Now by the dWCHP of $p$ with respect to the d-space $\underline{X}\times \uparrow [0,\frac{1}{2}]$ there exists $\widetilde{\Phi}:\underline{X}
\times \uparrow [0,\frac{1}{2}]\times \uparrow \mathbf{I}\rightarrow \underline{E}$ with $p\circ \widetilde{\Phi}=\widetilde{\varphi}$ and
$\widetilde{\Phi}_0\stackunder{(p)}{\simeq_d}\Phi'$. Suppose that $\Psi:\Phi'\stackunder{(p)}{\preceq_d}\widetilde{\Phi}_0$. Then define
$\Phi:\underline{X}\times \uparrow \mathbf{I}\rightarrow \underline{E}$ by
\[\Phi(x,t)=\left\{ \begin{array}{ll}
\Psi(x,t,2t)), & \mbox{if $0\leq t\leq 1/2 $},\\
\widetilde{\Phi}(x,\frac{1}{2},2t-1), & \mbox{if $1/2\leq t\leq 1 $}.\end{array} \right.
\]
This is well defined since $\Psi(x,\frac{1}{2},1)=\Psi_1(x,\frac{1}{2})=\widetilde{\Phi}_0(x,\frac{1}{2})=\widetilde{\Phi}(x,\frac{1}{2},0)$, and it
is obviously a d-map. Then, if $0\leq t\leq 1/2$, $p\Phi(x,t)=p\Psi(x,t,2t)=p\Phi'(x,t)=\varphi(x,t)$, and for $1/2\leq t\leq 1$, $p\Phi(x,t)=
p\widetilde{\Phi}(x,\frac{1}{2},2t-1)=\widetilde{\varphi}(x,\frac{1}{2},2t-1)=\varphi(x,t)$. Hence $p\circ \Phi=\varphi$. And $\Phi_0=\Psi_0=
\Phi'_0$.

If we have $\overline{\Psi}:\widetilde{\Phi}_0\stackunder{(p)}{\preceq_d}\Phi'$, define $\Phi:\underline{X}\times \uparrow \mathbf{I}\rightarrow \underline{E}$ by
\[\Phi(x,t)=\left\{ \begin{array}{ll}
\widetilde{\Phi}(x,\frac{1}{2},2t), & \mbox{if $0\leq t\leq 1/2 $},\\
\overline{\Psi}(x,t,2t-1), & \mbox{if $1/2\leq t\leq 1 $}.\end{array} \right.
\]
which verifies the same properties as above.

If $\Psi:\Phi'\stackunder{(p)}{\preceq_d}\chi$ and $\overline{\Psi}:\widetilde{\Phi}_0\stackunder{(p)}{\preceq_d}\chi$, define $\Phi$ by
\[\Phi(x,t)=\left\{ \begin{array}{ll}
\Psi(x,t,2t), & \mbox{if $0\leq t\leq 1/2 $},\\
\overline{\Psi}(x,t,2t-1), & \mbox{if $1/2\leq t\leq 1 $}.\end{array} \right.
\]
The other cases which imply the equivalence $\widetilde{\Phi}_0\stackunder{(p)}{\simeq_d}\Phi'$ are treated similarly.

(a)$\Rightarrow$ (b)2). We have given $\Phi':\underline{X}\times \uparrow [\frac{1}{2},1]\rightarrow \underline{E}$, $\varphi:\underline{X}\times \uparrow \mathbf{I}\rightarrow \underline{B}$ with $p\circ \Phi'=\varphi|\underline{X}\times \uparrow [\frac{1}{2},1]$. Define $\widetilde{\varphi}:\underline{X}\times \uparrow [\frac{1}{2},1]\times \uparrow\mathbf{I}\rightarrow \underline{B}$ by $\widetilde{\varphi}(x,s,t):=\varphi(x,st)$. This is a d-map and satisfies $\widetilde{\varphi}(x,s,1)=p\Phi'(x,s)$. Then continue similarly to
(b)1).
\end{proof}

\begin{lem}\label{lem.3.2.3}Let $p:E\rightarrow B$ be a directed weak fibration, $\underline{X}$ a d-space, $\uparrow A\subset  \uparrow V\subset \underline{X}$, $\uparrow V$ a d-halo around $\uparrow A$ in $\underline{X}$. Let $0<\varepsilon<1 $ and the commutative diagrams

a)
\[\xymatrix{\underline{E} \ar[r]^{p} & \underline{B}\\
\uparrow V\times \uparrow \mathbf{I} \cup (\underline{X}\times \uparrow[0,\varepsilon])\ar[u]^{\Phi'}\ar[r] & \underline{X}\times \uparrow \mathbf{I}\ar[u]_{\varphi}}\]

b)
\[\xymatrix{\underline{E} \ar[r]^{p} & \underline{B}\\
\uparrow V\times \uparrow \mathbf{I} \cup (\underline{X}\times \uparrow[\varepsilon,1])\ar[u]^{\Phi'}\ar[r] & \underline{X}\times \uparrow \mathbf{I}\ar[u]_{\varphi}}\]
where the lower horizontal arrows are inclusions.
Then there exists  $\Phi:\underline{X}\times \uparrow \mathbf{I}\rightarrow \underline{E}$ a d-homotopy lifting $\varphi$, $p\circ \Phi=
\varphi$ and such that:

a) $\Phi|(\uparrow A\times \uparrow \mathbf{I})\cup (\underline{X}\times \{0\})=\Phi'|(\uparrow A\times \uparrow \mathbf{I})\cup (\underline{X}\times \{0\})$,

b) $\Phi|(\uparrow A\times \uparrow \mathbf{I})\cup (\underline{X}\times \{1\})=\Phi'|(\uparrow A\times \uparrow \mathbf{I})\cup (\underline{X}\times \{1\})$.
\end{lem}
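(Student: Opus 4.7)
The strategy is to treat case (a); case (b) follows by the same argument using the dual haloing function $\tau_0$ (with $\tau_0|A=0$, $\tau_0|CV=1$) and Proposition~\ref{prop.3.2.2}(b)2), or alternatively by applying (a) to the reflected d-structures in view of Corollary~\ref{cor.2.2.12}. For (a), the idea is to convert the partial lift $\Phi'$ on the halo strip into a global candidate lift via a halo-controlled time reparametrization, and then to correct the residual defect by a single application of Corollary~\ref{cor.2.3.3}(i).

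Fix a haloing d-map $\tau_1:\underline{X}\to\uparrow\mathbf{I}$ with $\tau_1|A=1$ and $\tau_1|CV=0$, and define the d-map
\[
\rho(x,t)\ =\ \min\!\bigl(t,\ \varepsilon+\tau_1(x)(1-\varepsilon)\bigr),
\]
which satisfies $\rho(a,t)=t$ on $A$, $\rho(x,t)\le\varepsilon$ on $CV$, $\rho(x,0)=0$ everywhere, and $\rho(x,t)\le t$ always. (As a min of two weakly increasing functions of $(x,t)$ along d-paths, $\rho$ is directed.) The composite $\Phi^{\flat}(x,t):=\Phi'(x,\rho(x,t))$ is then a well-defined d-map (the argument always lies in the domain of $\Phi'$), it coincides with $\Phi'$ on $\uparrow A\times\uparrow\mathbf{I}\cup\underline{X}\times\{0\}$, and $p\circ\Phi^{\flat}(x,t)=\varphi(x,\rho(x,t))$.

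To remove the reparametrization defect, pick a weakly increasing d-map $\mu:\uparrow\mathbf{I}\to\uparrow\mathbf{I}$ with $\mu|_{[0,1/2]}\equiv 0$ and $\mu(1)=1$, and set
\[
\widetilde\varphi(x,t,s)\ =\ \varphi\bigl(x,\,(1-\mu(s))\rho(x,t)+\mu(s)\,t\bigr).
\]
Using $\rho(x,t)\le t$, one checks that the argument is jointly weakly increasing in $(x,t,s)$ along d-paths, so $\widetilde\varphi$ is a d-map. It is stationary in $s$ on $[0,1/2]$, with $\widetilde\varphi(\cdot,\cdot,0)=p\Phi^{\flat}$ and $\widetilde\varphi(\cdot,\cdot,1)=\varphi$. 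By Corollary~\ref{cor.2.3.3}(i) applied in the $s$-variable, with parameter d-space $\underline{X}\times\uparrow\mathbf{I}$ and initial lift $\Phi^{\flat}$, there exists $\widetilde\Phi:\underline{X}\times\uparrow\mathbf{I}\times\uparrow\mathbf{I}\to\underline{E}$ with $p\widetilde\Phi=\widetilde\varphi$ and $\widetilde\Phi(\cdot,\cdot,0)=\Phi^{\flat}$. I then set $\Phi(x,t):=\widetilde\Phi(x,t,1)$, which satisfies $p\Phi=\varphi$.

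The main obstacle is the strict matching $\Phi|S=\Phi'|S$, where $S=\uparrow A\times\uparrow\mathbf{I}\cup\underline{X}\times\{0\}$. On $S$ the reparametrization $\rho$ is either the identity ($\rho(a,t)=t$) or zero ($\rho(x,0)=0$), so $\widetilde\varphi$ is constant in $s$ there; yet in the directed setting a lift of a constant base path need not be constant in the fibre, so equality at $s=1$ is not automatic from the abstract form of Corollary~\ref{cor.2.3.3}(i). To force it, I plan to realize $\widetilde\Phi$ through the semistationary lifting pair $\lambda^s_0$ of Theorem~\ref{thm.2.4.2}, choosing $\lambda^s_0$ so that for every $e\in\underline{E}$ the constant path $c_{p(e)}\in\underline{B}^s_0$ is sent to the constant path $c_e\in\underline{E}^{\uparrow\mathbf{I}}$. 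With such a lifting pair, $\widetilde\Phi$ is constant in $s$ over $S$ and hence $\Phi|S=\Phi^{\flat}|S=\Phi'|S$. Verifying that the semistationary lifting pair supplied by Theorem~\ref{thm.2.4.2} may be normalised in this way, and that the resulting $\widetilde\Phi$ remains a d-map in all its arguments, is the principal technical point of the proof.
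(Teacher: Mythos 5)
Your construction of $\Phi^{\flat}$ via the halo-controlled reparametrization $\rho$, and of the connecting homotopy $\widetilde\varphi$, is fine (including the monotonicity check for $(1-\mu(s))\rho+\mu(s)t$, which works because $\rho\le t$). The genuine gap is exactly where you locate it, and it is not a minor technical point that can be deferred: your argument stands or falls on being able to choose the semistationary lifting pair $\lambda^s_0$ of Theorem~\ref{thm.2.4.2} so that $\lambda^s_0(e,c_{p(e)})=c_e$ for every $e$. This is a \emph{regularity} property of lifting functions. It is not supplied by Theorem~\ref{thm.2.4.2}, it is not established anywhere in the paper, and there is no reason to expect it for directed weak fibrations: the whole point of the dWCHP is that lifts are only controlled up to vertical d-homotopy (cf.\ Corollary~\ref{cor.2.2.5}, which only places the endpoint of the lift in the same directed path component), and even in the undirected setting regular lifting functions for Hurewicz fibrations require extra hypotheses on the base (Hurewicz's metrizability argument), while for Dold fibrations lifts of constant paths genuinely may move in the fibre. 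Without this normalisation, $\widetilde\Phi(\cdot,\cdot,1)$ need not agree with $\Phi'$ on $\uparrow A\times\uparrow\mathbf{I}\cup\underline{X}\times\{0\}$, and the strict matching required by the lemma is lost.

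The paper's proof avoids the problem by never introducing a correction homotopy in a new variable. Instead it time-shifts: with $\overline{\varphi}(x,t)=\varphi(x,\mathrm{Min}(\tau_1(x)+t,1))$ and $\overline{\Phi'}(x,t)=\Phi'(x,\mathrm{Min}(\tau_1(x)+t,1))$ on $\underline{X}\times\uparrow[0,\varepsilon]$, Proposition~\ref{prop.3.2.2}(b)1) yields a lift $\overline{\Phi}$ of $\overline{\varphi}$ with \emph{strict} agreement $\overline{\Phi}(x,0)=\Phi'(x,\tau_1(x))$, and then one glues
\[
\Phi(x,t)=\Phi'(x,t)\ \text{for } t\le\tau_1(x),\qquad
\Phi(x,t)=\overline{\Phi}(x,t-\tau_1(x))\ \text{for } t\ge\tau_1(x).
\]
Since $\tau_1\equiv 1$ on $A$, the glued map \emph{is} $\Phi'$ on $\uparrow A\times\uparrow\mathbf{I}$, and it is $\Phi'$ at $t=0$ for every $x$; the only lifting input needed is the exact initial condition already provided by Proposition~\ref{prop.3.2.2}, not any regularity of the lifting pair. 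I recommend you replace your correction step by this gluing. Your reduction of case (b) is salvageable (the direct route via $\tau_0$ and Proposition~\ref{prop.3.2.2}(b)2) is what the paper does; the reflection route via Corollary~\ref{cor.2.2.12} would additionally require checking that $1-\tau_0$ serves as a haloing function for the reflected structures), but as written the proof of case (a) is incomplete.
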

\begin{proof}Let $\tau_\alpha:\underline{X}\rightarrow \uparrow \mathbf{I},\alpha\in \{0,1\}$ be the haloing functions.

Case a). $A\subset \tau_1^{-1}(1)$, $CV\subset \tau_1^{-1}(0)$. Then define
$$\overline{\varphi}:\underline{X}\times \mathbf{I}\rightarrow \underline{B}, \qquad\overline{\varphi}(x,t):=\varphi(x,Min(\tau_1(x)+t,1)),$$
$$\overline{\Phi'}:X\times \uparrow [0,\varepsilon]\rightarrow \underline{E},\qquad \overline{\Phi'}(x,t):=\Phi'(x,Min(\tau_1(x)+t,1)).$$
The definition of $\overline{\Phi'}$ makes sense because $CV\subset \tau_1^{-1}(0)$. Also these maps are d-maps for reasons similar to those from the proof of Prop.\ref{prop.3.2.1}. Then $p\circ \overline{\Phi'}=\overline{\varphi}|\underline{X}\times \uparrow [0,\varepsilon]$. Now by  Prop.\ref{prop.3.2.2}, (a)$\Rightarrow$ (b)1), there exists a d-homotopy $\overline{\Phi}:\underline{X}\times \uparrow \mathbf{I }\rightarrow \underline{E}$ lifting $\overline{\varphi}$, $p\circ \overline{\Phi}=\overline{\varphi}$, with $\overline{\Phi}|\underline{X}\times \{0\}=\overline{\Phi'}|\underline{X}\times \{0\}$. Then we define $\Phi:\underline{X}\times \uparrow \mathbf{I}\rightarrow \underline{E}$ by

\[\Phi(x,t)=\left\{ \begin{array}{ll}
\Phi'(x,t), & \mbox{if $0\leq t\leq \tau_1(x) $},\\
\overline{\Phi}(x,t-\tau_1(x)), & \mbox{if $\tau_1(x)\leq t\leq 1 $}.\end{array} \right.
\]
This is well defined since for $t=\tau_1(x)$, $\overline{\Phi}(x,0)=\overline{\Phi'}(x,0)=\Phi'(x,\tau_1(x))$. It is also a d-map because the map
$\underline{X}\times \uparrow \mathbf{I}\rightarrow \underline{X}\times \uparrow \mathbf{I}$ , $(x,t)\rightarrow (x,\tau_1(x))$ is directed.
Then if $(x,t)\in \uparrow A\times\uparrow \mathbf{I}$ , then $\tau_1(x)=1$ and $\Phi(x,t)=\Phi'(x,t)$, $(\forall)t\in [0,1]$. For $(x,0)\in \underline{X}\times \{0\}$, $\Phi(x,0)=\Phi'(x,0)$. Finally, for $0\leq t\leq \tau_1(x)$, $p\Phi(x,t)=p\Phi'(x,t)=\varphi(x,t)$, and for $\tau_1(x)
\leq t\leq 1$, $p\Phi(x,t)=p\overline{\Phi}(x,t-\tau_1(x))=\overline{\varphi}(x,t-\tau_1(x))=\varphi(x,t)$. Therefore $p\circ \Phi=\varphi$.

Case b). $A\subset \tau_0^{-1}(0)$, $CV\subset \tau_1^{-1}(1)$ . Define
$$\widetilde{\varphi}:\underline{X}\times \uparrow \mathbf{I}\rightarrow \underline{B},\qquad \widetilde{\varphi}(x,t)=\varphi(x,Max(t+\tau_0(x)-1,0),$$
$$\widetilde{\Phi'}:\underline{X}\times \uparrow [\varepsilon,1]\rightarrow \underline{E},\qquad\widetilde{\Phi'}(x,t)=\Phi'(x,Max(t+\tau_0(x)-1,0).$$
These d-maps satisfy the relation $p\circ \widetilde{\Phi'}=\widetilde{\varphi}|\underline{X}\times \uparrow [\varepsilon,1]$.
Now by Prop.\ref{prop.3.2.2}, (a)$\Rightarrow$ (b)2), there exists a d-homotopy $\widetilde{\Phi}:\underline{X}\times \uparrow \mathbf{I}\rightarrow \underline{E}$ with $p\circ \widetilde{\Phi}=\widetilde{\varphi}$ and $\widetilde{\Phi}|\underline{X}\times\{1\}=\widetilde{\Phi'}|\underline{X}\times \{1\}$. Then we can define
\[\Phi(x,t)=\left\{ \begin{array}{ll}
\widetilde{\Phi}(x,t-\tau_0(x)+1), & \mbox{if $0\leq t\leq \tau_0(x) $},\\
\Phi'(x,t), & \mbox{if $\tau_0(x)\leq t\leq 1 $}.\end{array} \right.
\]
For $t=\tau_0(x)$, $\widetilde{\Phi}(x,t-\tau_0(x)+1)=\widetilde{\Phi}(x,1)=\widetilde{\Phi'}(x,1)=\Phi'(x,\tau_0(x))$. Hence $\Phi$ is well defined and it is also a d-homotopy. For $(x,t)\in \uparrow A\times\uparrow \mathbf{I}$ , we have $\tau_0(x)=0$ and $\Phi(x,t)=\Phi'(x,t)$, $(\forall)t\in [0,1]$. For $(x,1)\in \underline{X}\times \{1\}$, $\Phi(x,1)=\Phi'(x,1)$. Finally, for $\tau_0(x)\leq t\leq 1$, $p\Phi(x,t)=p\Phi'(x,t)=\varphi(x,t)$, and for $0\leq t\leq \tau_0(x)$, $p\Phi(x,t)=p\widetilde{\Phi}(x,t-\tau_0(x)+1)=\widetilde{\varphi}(x,t-\tau_0(x)+1)=\varphi(x, Max(t,0)=\varphi(x,t)$, i.e., $p\circ \Phi=\varphi$.
\end{proof}
\begin{thm}\label{thm.3.2.4}Let $p:\underline{E}\rightarrow \underline{B}$ be a directed map and $\{V(j)|j\in J\}$ a d-numerable covering of $\underline{B}$. If $p_{V(j)}:=p|p^{-1}(V(j)):\uparrow p^{-1}(V(j))\rightarrow \uparrow V(j)$ is a directed weak fibration for each $j\in J$, then $p$ is a directed weak fibration.
\end{thm}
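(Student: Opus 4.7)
The plan is to build the lift $\varphi':\underline{X}\times\uparrow\mathbf{I}\to\underline{E}$ strip by strip via a partition-of-unity induction, following the scheme of \cite{Dieck}, Prop.~9.5. Assume $\alpha = 0$ (the case $\alpha = 1$ is symmetric, with Lemma \ref{lem.3.2.3} case b) replacing case a); alternatively, one reflects via Corollary \ref{cor.2.2.12}). Given $f':\underline{X}\to\underline{E}$ and $\varphi:\underline{X}\times\uparrow\mathbf{I}\to\underline{B}$ with $\varphi\circ\partial^0 = p\circ f'$, we seek $\varphi'$ lifting $\varphi$ with $\varphi'\circ\partial^0\stackunder{(p)}{\simeq_d}f'$. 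First pull back the cover: $\{\varphi^{-1}(V(j))\}$ is a d-numerable cover of $\underline{X}\times\uparrow\mathbf{I}$, so Proposition \ref{prop.3.2.1} provides a d-numerable cover $\{W_n\}_{n\in\mathbb{N}}$ of $\underline{X}$ and positive $\varepsilon_n$ such that whenever $0 \leq t_2 - t_1 < \varepsilon_n$, the strip $W_n \times [t_1,t_2]$ lies in some $\varphi^{-1}(V(j))$. Fix a directed locally finite partition of unity $\{\pi_n\}$ with $\pi_n^{-1}(0,1]\subset W_n$, and for each $n$ a partition $0 = s_0^n < \cdots < s_{N_n}^n = 1$ of mesh less than $\varepsilon_n$; then $\varphi(W_n\times[s_i^n,s_{i+1}^n])\subset V(j_i^n)$, where $p_{V(j_i^n)}$ is a directed weak fibration by hypothesis.

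Put $\sigma_n := \sum_{m\leq n}\pi_m : \underline{X}\to\uparrow\mathbf{I}$, so $\sigma_0\equiv 0$, $\sigma_n$ increases pointwise to $1$, and stabilizes at each point after finitely many $n$ by local finiteness. Let $G_n := \{(x,t)\in\underline{X}\times\uparrow\mathbf{I} : t\leq \sigma_n(x)\}$. Construct d-maps $\Phi_n : G_n \to\underline{E}$ inductively, with $p\Phi_n = \varphi|G_n$, $\Phi_n|G_{n-1} = \Phi_{n-1}$, and $\Phi_n(\,\cdot\,,0)\stackunder{(p)}{\simeq_d}f'$. The passage $\Phi_{n-1}\leadsto\Phi_n$ occurs over the halo $W_n$ (which contains the support of $\pi_n$), with $\sigma_{n-1}$ and $\sigma_n$ playing the role of the haloing d-functions $\tau_0,\tau_1$ in Lemma \ref{lem.3.2.3}. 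Subdivide in the $t$-direction by $\{s_i^n\}$: each elementary strip $W_n\times[s_i^n,s_{i+1}^n]$ sits over $V(j_i^n)$, so Lemma \ref{lem.3.2.3} case a), applied to the local directed weak fibration $p_{V(j_i^n)}$, extends the partial lift across the halo while keeping fixed the portion that is already defined. Iterating over $i = 0,\dots,N_n-1$ builds $\Phi_n$ on all of $G_n$.

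Define $\varphi' := \lim_n \Phi_n$: at every $(x,t)$ the sequence is eventually constant, because $\sigma_n(x)$ stabilizes at $1$ after finitely many steps, so $\varphi'$ is well-defined. It is a d-map since locally it is a finite composite of d-maps, and by construction $p\varphi' = \varphi$ and $\varphi'(\,\cdot\,,0)\stackunder{(p)}{\simeq_d}f'$, giving the dWCHP for $p$.

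The main obstacle is the second step: organizing the doubly indexed induction over $(n,i)$ so that partial lifts, the vertical d-homotopy on the bottom face, and the haloing functions glue coherently. Lemma \ref{lem.3.2.3} is tailored precisely for this — it accepts a partial lift on a d-halo together with a prescribed boundary condition at $t = 0$, and extends it within a local directed weak fibration. Two directedness checks must be monitored throughout: first, that the haloing functions manufactured from $\sigma_n$ and $\pi_n$ are genuine d-maps, which is guaranteed because $\{\pi_n\}$ is a \emph{directed} partition of unity in the sense of Definition \ref{def.3.1.7}; second, that the concatenated vertical d-homotopy on the bottom face remains a d-map in the limit, again secured by local finiteness of $\{\pi_n\}$.
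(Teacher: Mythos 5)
There is a genuine gap, and it sits at the very start of your induction. You never reduce to semistationary homotopies via Theorem \ref{thm.2.3.2}, and without that reduction the seed of your strip-by-strip construction is unavailable. Lemma \ref{lem.3.2.3} (case a)) takes as \emph{input} an exact partial lift already defined on $\uparrow V\times \uparrow\mathbf{I}\cup(\underline{X}\times\uparrow[0,\varepsilon])$ --- in particular on a full bottom slab $\underline{X}\times[0,\varepsilon]$ --- and only then returns a lift agreeing exactly on $\uparrow A\times\uparrow\mathbf{I}\cup(\underline{X}\times\{0\})$. For a general $\varphi$ a directed \emph{weak} fibration gives you no exact lift extending $f'$ on any bottom slab: the dWCHP only controls the $t=0$ face up to a vertical d-homotopy, and these vertical homotopies differ from one $W_n$ (and one $V(j)$) to the next, so your inductive invariant ``$\Phi_n|G_{n-1}=\Phi_{n-1}$ and $\Phi_n(\cdot,0)\stackunder{(p)}{\simeq_d}f'$'' cannot be propagated: gluing forces exact agreement on $G_{n-1}$, while the local weak lifting only delivers agreement up to a homotopy that you have no mechanism to absorb. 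The paper's proof first invokes Theorem \ref{thm.2.3.2} so that $\varphi_0$ is stationary on $[0,\tfrac12]$; then the bottom slab $Z\times[0,t_1]$ can be lifted \emph{exactly} by $f$ itself (since $\varphi_0(z,t)=pf(z)$ there), which is precisely the missing seed, and Corollary \ref{cor.2.3.3} / Proposition \ref{prop.3.2.2} then give exact initial conditions at every later stage.

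The second divergence is how the base is glued. You attempt the Hurewicz-uniformization scheme directly, with $\sigma_n=\sum_{m\le n}\pi_m$ and regions $G_n=\{(x,t):t\le\sigma_n(x)\}$; but Proposition \ref{prop.3.2.1} indexes its covering by $K=\bigcup_r J^r$, not by $\mathbb{N}$, so the sums $\sigma_n$ need a well-ordering or a countability reduction you do not supply, and the $G_n$ are not products, so Lemma \ref{lem.3.2.3} does not apply to them as stated. The paper sidesteps both problems: it converts the lifting problem into a section problem for $q_\alpha:\underline{W}_\alpha\to\underline{X}$, verifies the d-SEP for $q_\alpha$ over each set of the Proposition \ref{prop.3.2.1} covering by an induction purely in the $t$-direction (nested halos $\tau_1^{-1}[t_i,1]$, reparametrizations $\theta_z$, Lemma \ref{lem.3.2.3} on genuine product strips $\uparrow Z\times\uparrow[t_{i-1},t_{i+1}]$), and then delegates all of the partition-of-unity gluing over the base to the already-established Section Extension result, Proposition \ref{prop.3.1.8}. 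If you want to salvage your outline, start by reducing to semistationary $\varphi_\alpha$ and then either adopt the $q_\alpha$/d-SEP translation or prove a version of Lemma \ref{lem.3.2.3} valid on regions under the graph of a directed function.
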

\begin{proof}The proof follows the idea of the proof of Theorem 9.5 from \cite{Dieck}, but using the preparatory results (Prop.\ref{prop.3.2.1}, Prop.\ref{prop.3.2.2} and Lemma \ref{lem.3.2.3}) proved above in the directed topology context, as in this proof also.

Suppose given a commutative diagram
\[\xymatrix{\underline{E} \ar[r]^{p} & \underline{B}\\
\underline{X}\ar[u]^{f}\ar[r]_{\partial^\alpha} & \underline{X}\times \uparrow \mathbf{I}\ar[u]_{\varphi_\alpha}}\]
with $\alpha\in \{0,1\}$, and $\varphi_\alpha$ a semistationary d-homotopy, i.e., $\varphi_0(x,t)=\varphi_0(x,0), (\forall)t\in [0,\frac{1}{2}],
x\in \underline{X}$ and $\varphi_1(x,t)=\varphi_1(x,1),(\forall)t\in [\frac{1}{2},1], x\in \underline{X}$. We need to prove that there exists a
d-homotopy $\Phi_\alpha:\underline{X}\times \uparrow \mathbf{I}\rightarrow \underline{E}$ satisfying $p\circ \Phi_\alpha=\varphi_\alpha$ and
$\Phi_\alpha\circ \partial^\alpha=f$.

Consider the d-space
$$ \underline{W}_\alpha=\{(x,w)\in \underline{X}\times \underline{E}^{\uparrow \mathbf{I}}|f(x)=w(\alpha),pw(t)=\varphi(x,t)\},$$
and the d-map
$$ q_\alpha:\underline{W}_\alpha\rightarrow \underline{X}, \qquad q_\alpha(x,w)=x.$$
We can prove that the existence of the d-homotopy $\Phi_\alpha$ follows from the existence of a d-section for $q_\alpha$. Let $\uparrow A$ be a
d-subspace of $\underline{X}$. Denote by $S_\alpha(\uparrow A)$ the set of d-sections of $q_\alpha$ over $\uparrow A$, and by $F_\alpha(\uparrow A)$
the set of d-homotopies $\Phi_\alpha:\uparrow A\times \uparrow \mathbf{I}\rightarrow \underline{E}$, with $p\circ \Phi_\alpha=\varphi_\alpha|
\uparrow A\times \uparrow \mathbf{I}$ and $\Phi_\alpha(a,\alpha)=f(a)$, for $a\in \uparrow A$.

The mapping $F_\alpha(\uparrow A)\rightarrow S_\alpha(\uparrow A)$ given by the correspondence $\Phi^\alpha\rightarrow s_\alpha$, with $s_\alpha(a)=
(a,\Phi_\alpha^a)$, with $\Phi_\alpha^a(t)=\Phi_\alpha(a,t)$, is bijective, with the following inverse: if $s_\alpha:\uparrow A\rightarrow
\underline{W}_\alpha$ is a d-section for $q_\alpha$ over $\uparrow A$ given by $s_\alpha(a)=(a,w(a))$, then define $\Phi_\alpha(a,t)=w(a)(t)$, which
verifies $p\Phi_\alpha(a,t)=pw(a)(t)=\varphi_\alpha(a,t)$, and $\Phi_\alpha(a,\alpha)=w(a)(\alpha)=f(a)$.

Now to prove that a d-section for $q_\alpha$ exists we will apply Proposition \ref{prop.3.1.8}. At first, a d-numerable covering of $\underline{X}\times \uparrow \mathbf{I}$ is $\{U_\alpha(j):=\varphi_\alpha^{-1}(V(j))|j\in J\}$. For this by Proposition \ref{prop.3.2.1}
there exists a d-numerable covering $\{X_k|k\in K\}$ of $\underline{X}$ and a family of positive real numbers $\{\varepsilon_k|k\in K$, such that
for $t_1,t_2\in [0,1]$ with $0\leq t_2-t_1<\varepsilon_k$, there is a $j\in J$ with $X_k\times [t_1,t_2]\subset U_\alpha(j)$. Now by Prop.\ref{prop.3.1.8} it is sufficient to prove that the maps $q_{\alpha k}:q_\alpha|q_\alpha^{-1}(\uparrow X_k):\uparrow q_\alpha^{-1}(X_k)\rightarrow
\uparrow X_k$ have the d-SEP. More generally, we prove that for every subset $Z\subset X_k$ the restrictions $q_{\alpha Z}:=q_\alpha|\uparrow q_\alpha^{-1}(Z)$ have the d-SEP.

Let $\uparrow A $ be a d-subspace of $\uparrow Z$ and $\uparrow V$ a d-halo of $\uparrow A$ in $\uparrow Z$. Suppose that $s_{\uparrow V}$ is a d-section of $q_{\alpha Z}$ over $\uparrow V$. This define a d-homotopy $\Phi_{\alpha V}:\uparrow V\times \uparrow \mathbf{I}\rightarrow \underline{E}$
with $p\circ \Phi_{\alpha V}=\varphi_\alpha|\uparrow V\times \uparrow \mathbf{I}$ and $\Phi_{\alpha V}(z,\alpha)=f(z),(\forall) z\in \uparrow Z$.

Now the cases $\alpha=0$ and $\alpha=1$ need to be kept separate.

Case $\alpha=0$. Consider $0\leq t_1<\frac{1}{2}$. For this we will define a d-homotopy
$$\Phi^0_0:\uparrow V\times \uparrow \mathbf{I}\bigcup \uparrow Z\times \uparrow[0,t_1]\rightarrow \underline{E}$$
over $\varphi_0$, and satisfying $ \Phi^0_0|\uparrow A\times \uparrow \mathbf{I}=\Phi_{0V}|\uparrow A\uparrow\times \uparrow \mathbf{I}$ and
$\Phi^0_0(z,0)=f(z)$, for $z\in V$. For this purpose we consider $\tau_1:\uparrow Z\rightarrow \uparrow \mathbf{I}$ a d-haloing function with
$\uparrow A\subset \tau_1^{-1}(1)$ and $Z\backslash V\subset \tau_1^{-1}(0)$. For $z\in \uparrow Z$ with $\tau_1(z)\leq t_1$ define
$\theta_z:\uparrow \mathbf{I}\rightarrow \uparrow \mathbf{I}$ an increasing piecewise affine function with $\theta_z(0)=0,\theta_z(\tau_1(z))=
\tau_1(z), \theta_z(t_1)=\tau_1(z),\theta_z(\frac{1}{2}=\frac{1}{2},\theta_z(1)=1$. Then we define $\Phi^0_0$ by

\[\Phi^0_0(z,t)=\left\{ \begin{array}{ll}
\Phi_{0V}(z,\theta_z(t)), & \mbox{if $z\in V,t\in [0,1] $},\\
f(z), & \mbox{if $z\in \tau_1^{-1}(0),0\leq t\leq t_1 $}.\end{array} \right.
\]
This is well defined since if $z\in \tau_1^{-1}(0)$ then $z\overline{\in}V$. It is also a d-map since $\Phi_{0V}$ and $\theta_z$ are d-maps.
And $\Phi^0_0$ satisfies $\Phi^0_0(z,0)=f(z)$ obviously and $\Phi^0_0(a,t)=\Phi_{0V}(a,t)$ , for $a\in \uparrow A$ since $ \tau_1(a)=1$ and so
$\theta_a=id_{\uparrow \mathbf{I}}$.

Now choose $0=t_0<t_1<...<t_n=1$ such that $t_1<\frac{1}{2}$ and $t_{i+1}-t_i<\varepsilon_k, i=1,...,n-1$. For this consider
$\uparrow W_0^i:=\uparrow \tau_1^{-1}[t_i,1]$, for $i=1,2,...,n$ and $\uparrow W_0^0=\uparrow V$. Then for $1\leq i\leq n-1$ we construct
inductively $\Phi_0^i:\uparrow Z\times \uparrow [t_{i-1},t_{i+1}]\rightarrow \underline{E}$ over $\varphi_0$ by using Lemma \ref{lem.3.2.3}a),
with $\uparrow W_0^i$ as $\uparrow A$, $\uparrow W_0^{i-1}$ as $\uparrow V$ , $\uparrow Z$ as $ \underline{X} $ , $\uparrow [t_{i-1},t_{i+1}]$ as
$\uparrow \mathbf{I}$ and $\uparrow [t_{i-1},t_i]$ as $\uparrow [0,\varepsilon]$. Now we define $\Phi_{\uparrow Z}:\uparrow Z\times \uparrow \mathbf{I}\rightarrow \underline{E}$ by $\Phi_{\uparrow Z}(z,t)=\Phi_0^i(z,t)$ for $t\in [t_{i-1},t_{i+1}]$, $i<n-1$, and $\Phi_Z(z,t)=
\Phi A_0^{n-1}$ for $t\in [t_{n-2},t_n]$. This satisfies the conditions $p\circ \Phi_{\uparrow Z}=\varphi_0|\uparrow Z\times \uparrow \mathbf{I}$ and
$\Phi_{\uparrow Z}\circ \partial^0=f|\uparrow Z$. Therefore $\Phi_{\uparrow Z}$ defines a d-section of $q_{0Z}$ which extends $s_{\uparrow V}|
\uparrow A$ because $\Phi_{\uparrow Z}|\uparrow A\times \uparrow \mathbf{I}=\Phi_{0V}|\uparrow A\times \uparrow \mathbf{I}$.

Case $\alpha=1$. Repeat the construction from the case $\alpha=0$ by a 'symmetry'. Consider $\frac{1}{2}<t_{n-1}\leq 1$ for which we define a
d-homotopy
$$\Phi^1_1:\uparrow V\times \uparrow \mathbf{I}\bigcup \uparrow Z\times \uparrow [t_{n-1},1]\rightarrow E,$$
by

\[\Phi^1_1(z,t)=\left\{ \begin{array}{ll}
f(z), & \mbox{if $z\in \tau_0^{-1}(0),t_{n-1}\leq t\leq\leq 1 $},\\
\Phi_{1V}(z,\theta'_z(t)), & \mbox{if $z\in V,t\in [0,1] $},\end{array} \right.
\]
where $\tau_0:\uparrow Z\rightarrow \uparrow \mathbf{I}$ satisfies $\uparrow A\subset \tau_0^{-1}(0)$ and $Z\backslash V\subset \tau_0^{-1}(1)$,
and $\theta'_z$ is defined for $\tau_0(z)\geq t_{n-1}$ as an increasing piecewise affine function $\theta'_z:\uparrow \mathbf{I}\rightarrow \uparrow \mathbf{I}$ with $\theta'_z(0)=0,\theta'_z(\frac{1}{2})=\frac{1}{2},\theta'_z(t_{n-1})=\tau_0(z),\theta'_z(\tau_0(z))=\tau_0(z),\tau'_z(1)=1$.
Then $\Phi^1_1|\uparrow A\times \uparrow \mathbf{I}=\Phi_{1V}|\uparrow A\times \uparrow \mathbf{I}$.

Then consider a division of $[0,1]$ by $0=t_0<t_1<...<t_{n-1}<t_n=1$ such that $t_{n-1}>\frac{1}{2}$ and $t_{i+1}-t_i<\varepsilon_k$, for $i=0,1,...,
n-2$. For this we denote $\uparrow W^i_1=\tau_0[0,t_i]$, $i=0,1,...,n-1$, and $\uparrow W^n_1=\uparrow V$. Then for $0\leq i\leq n-2$, we construct $\Phi^i_1:\uparrow Z\times \uparrow [t_{i-1},t_{i+1}]\rightarrow \underline{E}$, over $\varphi_1$, by using Lemma \ref{lem.3.2.3} b), with $\uparrow
W^i_1$ as $\uparrow A$, $\uparrow W_1^{i+1}$ as $\uparrow V$, $\uparrow Z$ as $\underline{X}$, $\uparrow [t_{i-1},t_{i+1}]$ as $\uparrow \mathbf{I}$
and $\uparrow [t_{i-1},t_i]$ as $[\varepsilon,1]$. Now we can define $\Phi'_{\uparrow Z}:\uparrow Z\times \uparrow \mathbf{I}\rightarrow E$, by
$\Phi'_{\uparrow Z}(z,t)=\Phi^i_1(z,t)$ for $t\in [t_{i-1},t_{i+1}],i>1$, and $\Phi'_{\uparrow Z}(z,t)=\Phi^1_1(z,t)$ for $t\in [0,t_2]$. This
d-homotopy satisfies the conditions $p\circ \Phi'_{\uparrow Z}=\varphi_1|\uparrow Z\times \uparrow \mathbf{I}$ and $\Phi'_{\uparrow Z}\circ
\partial^1=f|\uparrow Z$. Therefore $\Phi'_{\uparrow Z}$ defines a d-section for $q_{1Z}$ which extend $s_{\uparrow V}|\uparrow A$ because
$\Phi'_{\uparrow Z}|\uparrow A\times \uparrow \mathbf{I}=\Phi_{1V}|\uparrow A\times \uparrow \mathbf{I}$. With this and the explanations from the
beginning of these constructions the proof is complete.
\end{proof}
\begin{cor}\label{cor.3.2.5}Let $p:\underline{E}\rightarrow \underline{B}$ be a directed map and $\{V(j)|j\in J\}$ a d-numerable covering of $\underline{B}$. If every $p_{V(j)}:=p/p^{-1}(V(j)):\uparrow p^{-1}(V(j))\rightarrow \uparrow V(j)$ is directed dominated by a directed fibration (e.g., by a trivial d-space over each $\uparrow V(j)$),then $p$ is a directed weak fibration.
\end{cor}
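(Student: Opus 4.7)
The plan is to reduce the corollary to Theorem \ref{thm.3.2.4} by upgrading each local ``directed dominated by a directed fibration'' hypothesis into the stronger local property ``directed weak fibration'', which is exactly what Theorem \ref{thm.3.2.4} needs.

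First, I would fix an index $j \in J$ and apply the hypothesis: there exists a directed fibration $p'_j : \underline{E}'_j \to \uparrow V(j)$ which directed dominates $p_{V(j)} : \uparrow p^{-1}(V(j)) \to \uparrow V(j)$. Corollary \ref{cor.2.2.10} states precisely that any d-map over $\underline{B}$ directed dominated by a directed fibration is a directed weak fibration, applied here with base $\uparrow V(j)$. Hence $p_{V(j)}$ is a directed weak fibration for every $j \in J$.

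Second, since $\{V(j) \mid j \in J\}$ is a d-numerable covering of $\underline{B}$ by hypothesis, and each restriction $p_{V(j)}$ has just been shown to be a directed weak fibration, Theorem \ref{thm.3.2.4} applies verbatim and yields that $p : \underline{E} \to \underline{B}$ is a directed weak fibration, which is the conclusion sought.

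Finally, for the parenthetical remark: if $p_{V(j)}$ is directed dominated by a projection $\uparrow V(j) \times \underline{F}_j \to \uparrow V(j)$ (a trivial d-space over $\uparrow V(j)$), then by Example E1 from Section 2.1 this projection is already a directed fibration, so the hypothesis of Corollary \ref{cor.2.2.10} is met and the argument above goes through unchanged. There is no genuine obstacle here, since the work has already been done in Corollary \ref{cor.2.2.10} (local part) and Theorem \ref{thm.3.2.4} (passage from local to global); the corollary is merely the composition of these two implications.
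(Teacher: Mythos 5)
Your proposal is correct and is exactly the intended argument: the paper leaves this corollary without an explicit proof precisely because it is the composition of Corollary \ref{cor.2.2.10} (each $p_{V(j)}$ is a directed weak fibration) with Theorem \ref{thm.3.2.4} (local to global). Your handling of the parenthetical example via E1 is also the right reading.
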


Faculty of Mathematics

"Al. I. Cuza" University,

 700505-Iasi, Romania.

E-mail:ioanpop@uaic.ro

\end{document}